\newcommand{\R}{{\mat R}}
\newcommand{\be}{\begin{eqnarray}}
\newcommand{\ben}{\begin{eqnarray*}}
\newcommand{\en}{\end{eqnarray}}
\newcommand{\enn}{\end{eqnarray*}}
\newcommand{\pa}{\partial}
\newcommand{\G}{\Gamma}
\newcommand{\Om}{\Omega}
\newcommand{\mat}{\mathbb}
\newcommand{\se}{\setminus}
\newtheorem{theorem}{Theorem}[section]
\newtheorem{lemma}[theorem]{Lemma}
\begin{document}
\renewcommand{\theequation}{\arabic{section}.\arabic{equation}}

\title{\bf Reverse time migration for inverse acoustic scattering by locally rough surfaces}

\author{
Jianliang Li\thanks{Key Laboratory of Computing and Stochastic Mathematics, School of Mathematics and Statistics, Hunan Normal University, Changsha, Hunan 410081, China ({\tt lijianliang@hunnu.edu.cn, lijl@amss.ac.cn})}
\and
Hao Wu\thanks{School of Mathematics and Statistics, Xi'an Jiaotong University,
		Xi'an, Shaanxi 710049, China ({\tt wuhao2022@stu.xjtu.edu.cn})}
\and
Jiaqing Yang\thanks{School of Mathematics and Statistics, Xi'an Jiaotong University,
Xi'an, Shaanxi 710049, China ({\tt jiaq.yang@mail.xjtu.edu.cn})}
}
\date{}

\maketitle

\begin{abstract}
Consider the inverse scattering of time-harmonic acoustic scattering by an infinite rough surface which is supposed to be a 
local perturbation of a plane. A novel version of reverse time migration (RTM) is proposed to reconstruct the shape and location of the rough surface.  The method is based on a modified Helmholtz-Kirchhoff identity associated with a special rough surface, leading to a modified imaging functional which uses the near-field data generated by point sources as measurements. The modified imaging functional always reaches a peak on the boundary of the rough surface for sound-soft case and penetrable case, and hits a nadir on the boundary of the rough surface for sound-hard case. Furthermore, we also establish the RTM method associated with the far-field data generated by plane waves. As far as we know, this is the first result for the RTM method with the far-filed data. Numerical experiments are presented to show the powerful imaging quality.
\vspace{.2in}

{\bf Keywords}: inverse acoustic scattering, locally rough surface, reverse time migration.

\end{abstract}

\maketitle

\section{Introduction}\label{sec1}

Motivated by significant applications in medical imaging \cite{SA99} and exploration geophysics \cite{BA84}, we consider the two-dimensional 
inverse acoustic scattering of time-harmonic acoustic scattering by a locally rough surface, which aims to reconstruct the shape and location of the rough surface.

The scattering surface is described by a curve 
\begin{eqnarray}\label{a1}
	\Gamma:=\{(x_1,x_2)\in\mathbb R^2: x_2=f(x_1)\}
\end{eqnarray}
where $f$ is assumed to be a Lipschitz continuous function with compact support. This means that the surface $\Gamma$ 
is a local perturbation of the plane
$\Gamma_0:=\{(x_1,x_2)\in\mathbb R^2: x_2=0\}.$
The rough surface $\Gamma$ separates the whole space into two half-spaces denoted by
\begin{eqnarray*}\label{a3}
	\Omega_1:=\{(x_1,x_2)\in\mathbb R^2: x_2>f(x_1)\}\quad{\rm and}\quad \Omega_2:=\{(x_1,x_2)\in\mathbb R^2: x_2<f(x_1)\},
\end{eqnarray*}
which are filled with homogeneous mediums described by wave numbers $\kappa_1>0$ and $\kappa_2>0$, respectively.

For impenetrable cases, the incident wave $u^i$ is induced by the point source, which means 
\begin{eqnarray}\label{a4}
	u^i(x)=\Phi_{\kappa_1}(x,x_s):=\frac{\rm i}{4}H_0^{(1)}(\kappa_1|x-x_s|)\quad {\rm for}\;\; x_s\in\Omega_1.
\end{eqnarray}
Here, $H_0^{(1)}$ is the Hankel function of the first kind of order zero, and $\Phi_{\kappa_1}$ is the fundamental solution of the Helmholtz
equation satisfying $\Delta\Phi_{\kappa_1}(\cdot, x_s)+\kappa_1^2\Phi_{\kappa_1}(\cdot, x_s)=-\delta_{x_s}(\cdot)$ in $\mathbb R^2$, where $\delta$ is the Kronecker delta distribution. Then the scattering of $u^i$ by the rough surface $\Gamma$ can be modelled by
\begin{equation}\label{a5}
	\left\{\begin{aligned}
		&\Delta u_{\alpha}+\kappa_1^2u_{\alpha}=-\delta_{x_s} \qquad\qquad\textrm{in}\;\; \Omega_1, \\
		&\mathcal{B}u_{\alpha}=0\qquad\qquad\qquad\qquad\;\;\;\;\textrm{on}\;\; \Gamma,\\
		&\lim_{|x|\rightarrow \infty}|x|^{\frac{1}{2}}\left(\partial_{|x|} u_{\alpha}^s-{\rm i}\kappa_1 u_{\alpha}^s\right)=0,\quad
	\end{aligned}
	\right.
\end{equation}
where $u_{\alpha}:=u^i+u_{\alpha}^s$ for $\alpha\in\{D,N\}$ denotes the total field which is the sum of the incident field $u^i$ and the scattered field $u_{\alpha}^s$. ${\mathcal B}$ stands for the boundary condition on $\Gamma$ satisfying ${\mathcal B}u_D:=u_D$ if $\Gamma$ is a sound-soft rough surface, and ${\mathcal B}u_N=\partial_{\nu} u_N$ if $\Gamma$ is a sound-hard rough surface. Here and throughout, $\nu=\nu(x)$ is the upward normal vector directing into $\Omega_1$ for $x\in\Gamma$, and $\partial_{\nu}$ stands for the normal derivative. The last condition in (\ref{a5}) is the well-known Sommerfeld radiation condition which holds uniformly for all directions $\hat{x}:=x/|x|\in{\mathbb S}_+:=\{x\in{\mathbb R}^2: |x|=1,x_2>0\}$.

For the penetrable case, consider an incoming wave induced by the point source (\ref{a4})
to be incident on the scattering interface $\Gamma$ from the domain $\Omega_1$. Then the scattering of $u^i$ by $\Gamma$ can be modelled by 
\be\label{a7}
\left\{\begin{array}{lll}
	\Delta u^s+\kappa_1^2u^s=0& \textrm{in}\;\Om_1 \\[2mm]
	\Delta u^s+\kappa_2^2u^s =0&\textrm{in}\;\Om_2 \\[2mm]
	u^s|_+-u^s|_- =-u^i&{\rm on\;}\G\\[2mm]
	\pa_\nu u^s|_+-\pa_\nu u^s|_-=-\pa_\nu u^i& {\rm on\;}\G\\[2mm]
	\lim\limits_{|x|\rightarrow\infty}|x|^{\frac12}\left(\partial_{|x|} u^s-{\rm i}\kappa u^s\right)=0.
\end{array}
\right.
\en
Here the notation $\cdot|_\pm$ represents the limits of $\cdot$ approaching $\G$ from $\Om_1$ and $\Om_2$, respectively,
$\kappa$ is the wavenumber defined by $\kappa:=\kappa_1$ in $\Omega_1$ and $\kappa:=\kappa_2$ in $\Omega_2$.
The last condition in (\ref{a7}) is the Sommerfeld radiation condition which holds uniformly for all directions $\hat{x}:=x/|x|\in{\mathbb S}:=\{x\in{\mathbb R}^2: |x|=1\}$. It is worth pointing out that the scattered field $u^s$ satisfying the Sommerfeld radiation condition has the asymptotic behavior of an outgoing spherical wave 
\begin{eqnarray*}\label{a9}
	u^s(x)=\frac{e^{{\rm i}\kappa |x|}}{|x|^{\frac{1}{2}}}\left\{u^{\infty}(\hat{x})+O\left(\frac{1}{|x|}\right)\right\}\quad {\rm for}\; |x|\to\infty
\end{eqnarray*}
uniformly in all direction $\hat{x}\in{\mathbb S}$, where $u^{\infty}(\hat{x})$ is known as the far field pattern of $u^s$.

Given the incident wave, the rough surface, and the boundary condition, the direct scattering problem is to determine the distribution of the scattered wave, which is extensively studied by the variational method \cite{SE10,MT06} and the integral equation approach \cite{LYZ13,DTS03,ZS03} with employing a generalized Fredholm theory \cite{SZ97, SZ00}. Recently, a novel technique was proposed to prove the well-posedness of (\ref{a5}) for sound-soft case in \cite{DLLY17}, based on transferring the unbounded, locally rough surface scattering problem into an equivalent boundary value problem with the compactly supported boundary data, whose well-posedness follows from the classical Fredholm theory. The novel technique has been extended to deal with penetrable, locally rough surface in \cite{LYZ21}, and it has been extended in \cite{LYZ22} to investigate penetrable, locally rough surface with embedded obstacles in the lower half-space. 
While the inverse scattering problem is to determine the rough surface from the measured scattered field in some certain domain. For the time-harmonic case, there exists a large number of references on inversion methods such as Newton-type approaches \cite{GJP11,GJ11,GJ13,SP02,QZZ19,RM15,ZZ13}, the Kirsch-Kress schemes \cite{CR10,LZ13}, nonlinear integral equation methods \cite{L19,LB13}, reconstruction algorithms based on transformed field expansions \cite{GL13,GL14}, the factorization method \cite{RG08,AL08}, linear sampling methods \cite{DLLY17,LYZ21, ZY22}, and the direct imaging methods \cite{LLLL15, LZZ18,LZZ19, LYZZ23}.
For the time-domain case, a singular source method has been extended to solve the inverse rough surfae scattering problem\cite{C03}.

The RTM method is a sample-type method which are widely applied in exploration geophysics \cite{BA84, BCS01, JFC85} and seismic imaging \cite{BCS01}. The main idea of the RTM method consists of two steps. The first step is to back-propagate the complex conjugated data into the back-ground medium, and the second step is to compute the cross-correlation between the incident field and the back propagated field. Thus we can define the imaging functional as the imaginary part of the cross-correlation, which always peak on the boundary of the scatterer. Since the RTM method can provide an effective, stable and powerful reconstruction of the scatterer, it has gained considerable attention 
and has been extensively investigated by mathematicians and engineers. Mathematically, the justification of the RTM method has been proved rigorously for the inverse obstacle scattering problem for acoustic waves \cite{CCH131,L21,CH153,CH152}, elastic waves \cite{CH151}, and electromagnetic waves \cite{CCH132}.
It is worth mentioning that the RTM method in these references requires the full scattering data (both the intensity and phase information). On the other hand, in a variety of realistic applications, only the phaseless data is available. In this case, the RTM approach have been developed for acoustic waves \cite{CH17} and electromagnetic waves \cite{CH16}. It is shown in \cite{CH16, CH17} that the imaging functional with phaseless data has essentially the same asymptotic behavior as the case of full data. It is noticed that the almost all references about the RTM method are related to the inverse scattering associated with bounded obstacles. However, it is challenging to develop the RTM method for reconstructing an infinite rough surface since the general Helmholtz-Kirchhoff identity is not valid in this case.  No such a result is available so far. 

In this paper, we aim to investigate the RTM scheme to reconstruct infinite, locally rough surfaces where the key difficulty is that the usual Helmholtz-Kirchhoff identity presented in \cite{CCH131} is not applicable. The first novelty of this paper is to establish a modified Helmholtz-Kirchhoff identity by the Green's function associated with a special locally rough surface. Thus, a modified imaging functional is proposed from the near-field data generated by point sources, where its mathematical justification is proved rigorously. Specifically, we demonstrate that the modified imaging functional enjoys the nice feature that it always peaks on the boundary of the sound-soft and penetrable rough surfaces, and reaches a nadir on the boundary of the sound-hard rough surface. 
The second novelty is associated with the RTM with the far-field measurements generated by plane waves. An imaging functional is first proposed based on a novel mixed reciprocity relation, which is indeed the limit of the imaging functional  with the near-field data generated by point sources. It is further shown that the imaging functional has the same asymptotic property with the case of the near-filed measurements, and can be thus used to recover the the location and shape of the rough surfaces. To the best of our knowledge, this is the first result for the RTM method with using far-field measurements, especially for the reconstruction of infinite rough surfaces.
The numerical experiments shows that the two modified imaging functionals can provide a stable and powerful reconstruction for the rough surfaces. 

The rest of the paper is organized as follows. In section 2, we develop the RTM method for the locally  sound-soft and sound-hard rough surface, which includes the near-field reconstruction in the first subsection and the far-field reconstruction in the second subsection. Section 3 is devoted to the RTM approach for penetrable locally rough surfaces which consists of the near-field and far-field reconstructions, respectively. In section 4, we present some numerical experiments to demonstrate the validity of the RTM method. This paper concludes with some general remarks and discussions on the future work in section 5.

\section{The RTM for impenetrable locally rough surfaces}
\setcounter{equation}{0}

In this section, we investigate the RTM method for inverse acoustic scattering by impenetrable locally rough surfaces with the Dirichlet or Neumann boundary conditions. This section consists of two subsections. The first subsection will discuss the RTM method based on the near-field data which corresponds to the point source incidence. Based on a new mixed reciprocity relation, we will present the RTM method based on the far-field data generated by the plane wave incidence.

\subsection{The near-field reconstruction}
\setcounter{equation}{0}

\begin{figure}[htbp]
	\centering
	\includegraphics[width=5in, height=3in]{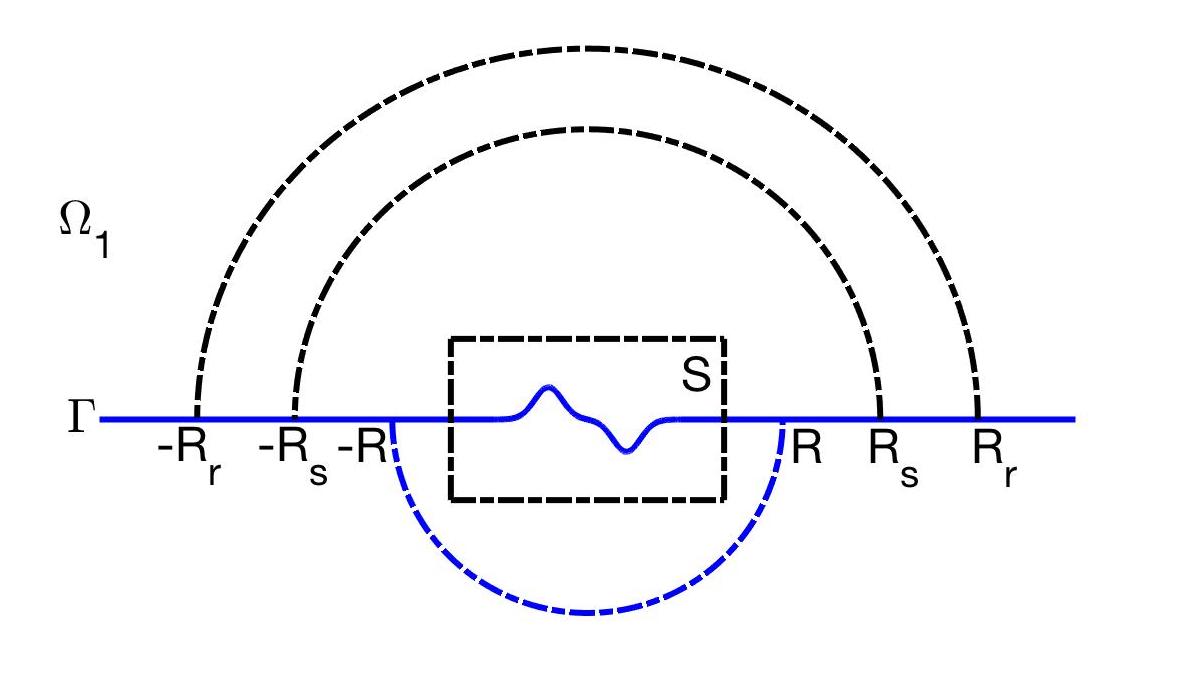}
	\caption{The setting of RTM method for sound-soft and sound-hard cases.}
	\label{f1} 
\end{figure}

As shown in Figure \ref{f1}, let $\Gamma$ be the locally rough surface defined by (\ref{a1}), whose local perturbation is contained 
in a rectangle sampling domain $S$. We choose a large enough $R$ and define a special locally rough surface $\Gamma_R$ as 
\begin{eqnarray}\label{b1}
	\Gamma_R:=\{x\in\mathbb R^2: x_2=0\;{\rm for }\;|x_1|\geq R\;{\rm and}\; x_2=-\sqrt{R^2-x_1^2}\;{\rm for}\;|x_1|\leq R\}
\end{eqnarray}
such that the sampling domain $S$ lies totally above $\Gamma_R$. We assume that there are $N_s$ point sources $x_s$ uniformly distributed on $\Gamma_s$ and $N_r$ receivers $x_r$ uniformly distributed on $\Gamma_r$. Here, $\Gamma_s$ and $\Gamma_r$ denote the upper semicircle with the origin as the center and $R_s$, $R_r$ as the radius, respectively. We assume a priori that $R<R_s\leq R_r$.

We first consider the scattering of the incident point source $\Phi_{\kappa_1}(x,x_s)$ given by (\ref{a4}) by the special locally rough surface $\Gamma_R$, which reads 
\begin{eqnarray}\label{b2}
	\left\{\begin{array}{lll}
		\Delta G_{\alpha}^s(x,x_s)+\kappa_1^2G_{\alpha}^s(x,x_s)=0 &\textrm{in}\;\; \Omega_R, \\[3mm]
		\mathcal{B}G_{\alpha}^s(x,x_s)=-\mathcal{B}\Phi_{\kappa_1}(x,x_s)&\textrm{on}\;\; \Gamma_R,\\[3mm]
		\lim\limits_{|x|\rightarrow \infty}|x|^{\frac{1}{2}}\left(\partial_{|x|} G_{\alpha}^s(x,x_s)-{\rm i}\kappa_1  G_{\alpha}^s(x,x_s)\right)=0.
	\end{array}
	\right.
\end{eqnarray}
Here $\alpha\in\{D,N\}$, $\alpha=D$ represents the Dirichlet boundary condition imposed on $\Gamma_R$ which means $G_D^s(x,x_s)=-\Phi_{\kappa_1}(x,x_s)$ on $\Gamma_R$, and $\alpha=N$ represents the Neumann boundary condition imposed on $\Gamma_R$ which means $\partial_{\nu}G_N^s(x,x_s)=-\partial_{\nu}\Phi_{\kappa_1}(x,x_s)$ on $\Gamma_R$. $G^s_{\alpha}(x,x_s)$ is the scattered field, $G_{\alpha}(x,x_s):=G_{\alpha}^s(x,x_s)+\Phi_{\kappa_1}(x,x_s)$ denotes the total field, and $\Omega_R$ is the upper half-space separated by $\Gamma_R$, which means 
\begin{eqnarray*}\label{b3}
	\Omega_R:=\{(x_1,x_2)\in\mathbb R^2: x_2>0\;\;{\rm for }\;|x_1|\geq R,\;\;{\rm and}\;\; x_2>-\sqrt{R^2-x_1^2}\;\;{\rm for}\;|x_1|\leq R\}.
\end{eqnarray*}
It follows from \cite{GJ11,DLLY17,QZZ19} that Problem (\ref{b2}) is well-posed in a standard Sobolev space. 

For $x_s\in\Gamma_s$, recall that $u_{\alpha}^s(x,x_s)$ is the solution of Problem (\ref{a5}) when the incident source is located at $x_s$. Define 
\begin{eqnarray}\label{b8}
	V_{\alpha}(x,x_s):=u_{\alpha}^s(x,x_s)-G^s_{\alpha}(x,x_s)
\end{eqnarray}
then it is easily checked that it solves 
\begin{eqnarray}\label{b9}
	\left\{\begin{aligned}
		&\Delta V_{\alpha}(x,x_s)+\kappa_1^2V_{\alpha}(x,x_s)=0 \qquad\qquad\qquad\;\; \textrm{in}\;\; \Omega_1, \\
		&\;\mathcal{B}V_{\alpha}(x,x_s)=-\mathcal{B}G_{\alpha}(x,x_s)\qquad\qquad\qquad\;\;\;\;\;\textrm{on}\;\; \Gamma\setminus\Gamma_R,\\
		&\;\mathcal{B}V_{\alpha}(x,x_s)=0\qquad\qquad\qquad\qquad\qquad\qquad\;\;\textrm{on}\;\;\Gamma\cap\Gamma_R,\\
		&\lim\limits_{|x|\rightarrow \infty}|x|^{\frac{1}{2}}\left(\partial_{|x|} V_{\alpha}(x,x_s)-{\rm i}\kappa_1  V_{\alpha}(x,x_s)\right)=0.
	\end{aligned}
	\right.
\end{eqnarray}
Noting that we can compute the scattered field $G_{\alpha}^s(x_r,x_s)$ by solving Problem (\ref{b2}) using Nystr\"{o}m method or finite element method. Thus, we can obtain $V_{\alpha}(x_r,x_s)$ from the measurement $u_{\alpha}^s(x_r,x_s)$ and (\ref{b8}). 

Now, we are able to introduce the RTM method which consists of two steps. The first step is to back-propagate the complex  conjugated data $\overline{V_{\alpha}(x_r,x_s)}$ into the domain $\Omega_R$; the second step is to calculate the imaginary part of the cross-correlation of $G_{\alpha}(\cdot, x_s)$ and the back-propagation field. More precisely, we summarize it in the following algorithm.

{\bf Algorithm 1 (RTM for impenetrable locally rough surfaces)}: Given the data $V_{\alpha}(x_r,x_s)$ for $r=1,2,...,N_r$ and $s=1,2,...,N_s$.
\begin{itemize}
	\item Back-propagation: for $s=1,2,...,N_s$, solve the problem 
	\begin{eqnarray}\label{b10}
		\left\{\begin{aligned}
			&\Delta W_{\alpha}(x,x_s)+\kappa_1^2W_{\alpha}(x,x_s)=\frac{|\Gamma_r|}{N_r}\sum_{r=1}^{N_r}\overline{V_{\alpha}(x_r,x_s)}\delta_{x_r}(x) \;\textrm{in}\;\; \Omega_R, \\
			& \;\mathcal{B}W_{\alpha}(x,x_s)=0\qquad\qquad\qquad\qquad\qquad\qquad\qquad\qquad\quad\textrm{on}\;\; \Gamma_R,\\
			&\; \lim\limits_{|x|\rightarrow \infty}|x|^{\frac{1}{2}}\left(\partial_{|x|} W_{\alpha}(x,x_s)-{\rm i}\kappa_1  W_{\alpha}(x,x_s)\right)=0,
		\end{aligned}
		\right.
	\end{eqnarray}
	to address the solution $W_{\alpha}$.
	\item Cross-correlation: for each sampling point $z\in S$, calculate the indicator function 
	\begin{eqnarray*}\label{b11}
		{\rm Ind}_{\alpha}(z)=\kappa_1^2{\rm Im}\left\{\frac{|\Gamma_s|}{N_s}\sum_{s=1}^{N_s}G_{\alpha}(z,x_s)W_{\alpha}(z,x_s)\right\}
	\end{eqnarray*}
	and then plot the mapping ${\rm Ind}_{\alpha}(z)$ against $z$.
\end{itemize}

It follows from the linearity that the solution of Problem (\ref{b10}) can be represented by 
\begin{eqnarray*}\label{b12}
	W_{\alpha}(x,x_s)=-\frac{|\Gamma_r|}{N_r}\sum_{r=1}^{N_r}\overline{V_{\alpha}(x_r,x_s)}G_{\alpha}(x,x_r),
\end{eqnarray*}
which leads to 
\begin{eqnarray}\label{b13}
	{\rm Ind}_{\alpha}(z)=-\kappa_1^2{\rm Im}\left\{\frac{|\Gamma_s|}{N_s}\frac{|\Gamma_r|}{N_r}\sum_{s=1}^{N_s}\sum_{r=1}^{N_r}G_{\alpha}(z,x_s)G_{\alpha}(z,x_r)\overline{V_{\alpha}(x_r,x_s)}\right\}\;\; z\in S.
\end{eqnarray}
Observing that the sampling point $z\in S$, the source location $x_s\in\Gamma_s$, the receiver $x_r\in\Gamma_r$, and $S$ is inside in $B_R$, we have $G_{\alpha}(z,x_s)$ and $G_{\alpha}(z,x_r)$ are smooth. Combining of the smoothness of $V_{\alpha}(x_r,x_s)$ and the trapezoid quadrature formula yields that ${\rm Ind}_{\alpha}(z)$ given by (\ref{b13}) is a discrete formula of the following continuous function:
\begin{eqnarray*}\label{b14}
	\widetilde{{\rm Ind}}_{\alpha}(z)=-\kappa_1^2{\rm Im}\int_{\Gamma_r}\int_{\Gamma_s}G_{\alpha}(z,x_s)G_{\alpha}(z,x_r)\overline{V_{\alpha}(x_r,x_s)}{\rm d}s(x_s){\rm d}s(x_r),\quad z\in S.
\end{eqnarray*}

The remaining part of this subsection aims to give a resolution analysis of the function $\widetilde{{\rm Ind}}_{\alpha}(z)$. Our destination is to show that $\widetilde{{\rm Ind}}_{\alpha}(z)$ will have contrast at the rough surface $\Gamma$ and decay away from $\Gamma$. To this end, we introduce the following Lemmas \ref{lem1}-\ref{lem3} and Theorem \ref{thm1} and, for simplicity, just give the proof for the Dirichlet boundary condition case, the results for the Neumann boundary condition case can be proven by similar arguments. We first introduce the following modified Helmholtz-Kirchhoff identity.
\begin{lemma}\label{lem1}
	Let $G_{\alpha}$ be the total field of the scattering problem (\ref{b2}), and $\nu$ be the unit upward normal to $\Gamma_p$ for $p\in\{r,s\}$, then we have 
	\begin{eqnarray*}\label{b4}
		\int_{\Gamma_p}\left(\overline{G_{\alpha}(\xi,x)}\frac{\partial G_{\alpha}(\xi,z)}{\partial \nu(\xi)}-\frac{\partial \overline{G_{\alpha}(\xi,x)}}{\partial\nu(\xi)}G_{\alpha}(\xi,z)\right){\rm d}s(\xi)=2{\rm i}{\rm Im}G_{\alpha}(x,z)
	\end{eqnarray*}
	for any $x,z\in B_{R_s}\cap\Omega_R$.
\end{lemma}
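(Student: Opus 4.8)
The plan is to derive this modified Helmholtz--Kirchhoff identity from the Sommerfeld radiation condition applied to $G_\alpha$ together with Green's second identity, mimicking the classical argument in \cite{CCH131} but exploiting the fact that $G_\alpha$ satisfies a homogeneous boundary condition on $\Gamma_R$. First I would fix $x,z\in B_{R_s}\cap\Omega_R$ and work inside a large domain $D_\rho:=(\Omega_R\cap B_\rho)$ whose boundary consists of the large semicircle-type arc $\partial B_\rho\cap\Omega_R$, the portion of $\Gamma_R$ inside $B_\rho$, and (after excising small discs around the singularities $x$ and $z$) two small circles. Since $G_\alpha(\cdot,x)$ and $G_\alpha(\cdot,z)$ are radiating solutions of the Helmholtz equation in $\Omega_R$ away from their source points, Green's second identity applied to $\overline{G_\alpha(\cdot,x)}$ and $G_\alpha(\cdot,z)$ over $D_\rho$ produces only boundary terms.

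The key simplifications are: (i) on $\Gamma_R$ both fields satisfy the same homogeneous boundary condition ($G_\alpha=0$ for $\alpha=D$, $\partial_\nu G_\alpha=0$ for $\alpha=N$), so the integral of the bilinear concomitant over $\Gamma_R$ vanishes identically; (ii) the small-circle contributions around $x$ and $z$ produce, by the standard local expansion of the fundamental solution $\Phi_{\kappa_1}$ and the fact that $G_\alpha-\Phi_{\kappa_1}$ is smooth near the source, exactly $-2\mathrm{i}\,\mathrm{Im}\,G_\alpha(x,z)$ in the limit of the shrinking radius (here I would use that $\overline{\Phi_{\kappa_1}(\xi,x)}$ contributes the singular part and the jump relations give the imaginary part of the regular part evaluated at the source); and (iii) the integral over the large arc $\partial B_\rho\cap\Omega_R$ tends to zero as $\rho\to\infty$. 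For step (iii) I would invoke the Sommerfeld radiation condition in the half-space setting exactly as in the classical Rellich-type estimate: writing the integrand as $\overline{G_\alpha}\,\partial_{|\xi|}G_\alpha - \overline{\partial_{|\xi|}G_\alpha}\,G_\alpha$ plus terms that are $o(1)$ after inserting $\partial_{|\xi|}G_\alpha = \mathrm{i}\kappa_1 G_\alpha + o(|\xi|^{-1/2})$, the leading parts cancel and the remainder is controlled by the uniform $L^2$-bound on $G_\alpha$ over large arcs. After these three observations, the only surviving contribution is the integral over the finite piece $\Gamma_R\cap B_\rho$ — but that is where the boundary condition kills it — so one is left equating the small-circle limit with... wait, one must be careful: the surviving term in the limit is the integral over the curve $\Gamma_p$ itself only if $\Gamma_p$ is taken as an interior artificial boundary. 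So more precisely I would instead apply Green's identity in the region bounded \emph{between} $\Gamma_p$ (the semicircle of radius $R_p$) and $\Gamma_R$, where neither $x$ nor $z$ lies, concluding that $\int_{\Gamma_p}(\cdots)\,\mathrm ds = \int_{\Gamma_R}(\cdots)\,\mathrm ds = 0$ is the wrong sign bookkeeping, and then separately handle the region inside $\Gamma_p$.

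Concretely, the cleanest route: apply Green's second identity to $\overline{G_\alpha(\cdot,x)}$ and $G_\alpha(\cdot,z)$ on the bounded domain $\Omega_R\cap B_{R_p}$ (which is bounded by $\Gamma_p$ on the top-ish and by $\Gamma_R\cap B_{R_p}$ at the bottom), with small discs around $x,z$ removed. The $\Gamma_R$ part vanishes by the boundary condition; the small circles yield $-2\mathrm i\,\mathrm{Im}\,G_\alpha(x,z)$; hence the $\Gamma_p$ integral equals $2\mathrm i\,\mathrm{Im}\,G_\alpha(x,z)$, which is the claim (the orientation of $\nu$ on $\Gamma_p$ being the upward normal matches the outward normal of this domain up to the conventional sign already absorbed). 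The main obstacle I anticipate is the rigorous justification that the contribution over $\Gamma_R\cap B_{R_p}$ genuinely vanishes together with controlling the (non-compact, since $\Gamma_R$ coincides with $\Gamma_0$ for $|x_1|\ge R$) far-field decay of $G_\alpha(\cdot,x)$ and $G_\alpha(\cdot,z)$ along the flat part of $\Gamma_R$ near $\partial B_{R_p}$; this is handled by the well-posedness and regularity of (\ref{b2}) cited from \cite{GJ11,DLLY17,QZZ19}, which give that $G_\alpha^s$ is a genuine radiating solution with the standard decay, so that all arc integrals behave as in the bounded-obstacle case. A secondary technical point is verifying the small-circle limit produces precisely $2\mathrm i\,\mathrm{Im}\,G_\alpha(x,z)$ rather than, say, $2\mathrm i\,\mathrm{Im}\,G_\alpha^s(x,z)$ plus a fundamental-solution artifact; here one uses that the singularities of $G_\alpha(\cdot,x)$ and $G_\alpha(\cdot,z)$ are exactly those of $\Phi_{\kappa_1}$, and the cross term $\overline{\Phi_{\kappa_1}(\xi,x)}\,\partial_\nu\Phi_{\kappa_1}(\xi,z)-\text{c.c.}$ is smooth when $x\ne z$, so only the diagonal-type contributions matter and they reproduce the symmetric imaginary part as in the classical identity.
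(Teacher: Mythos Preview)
Your ``cleanest route'' is exactly the paper's argument: apply Green's second identity to $\overline{G_\alpha(\cdot,x)}$ and $G_\alpha(\cdot,z)$ on the bounded domain $(B_{R_p}\cap\Omega_R)\setminus(\overline{B_\varepsilon(x)}\cup\overline{B_\varepsilon(z)})$, kill the $\Gamma_R$ contribution by the homogeneous boundary condition, and evaluate the two small-circle limits. So the core approach matches.

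Two remarks. First, your initial detour through a large arc $\partial B_\rho\cap\Omega_R$ and the Sommerfeld radiation condition is unnecessary here and is not what the paper does: since $B_{R_p}\cap\Omega_R$ is already bounded (its boundary is $\Gamma_p$ together with the compact piece $\Gamma_R\cap B_{R_p}$), no limit $\rho\to\infty$ is taken and no radiation estimate is needed. Your worry about ``non-compact'' behaviour of $\Gamma_R$ near $\partial B_{R_p}$ is therefore misplaced---the relevant portion of $\Gamma_R$ is just the two flat segments $\{R\le|x_1|\le R_p,\ x_2=0\}$ plus the lower semicircle of radius $R$, all compact. Second, to turn the small-circle limits $-G_\alpha(x,z)+\overline{G_\alpha(z,x)}$ into $-2\mathrm{i}\,\mathrm{Im}\,G_\alpha(x,z)$ you need the reciprocity $G_\alpha(x,z)=G_\alpha(z,x)$, which the paper invokes explicitly (citing Lemma~3.1 of \cite{DLLY17}); you use it implicitly but should state it.
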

\begin{proof}
	For $\alpha=D$, $\Gamma_p=\Gamma_s$ and any $x,z\in B_{R_s}\cap\Omega_R$, we choose a sufficient small $\varepsilon>0$ such that the circles $B_{\varepsilon}(x)$, $B_{\varepsilon}(z)$ with $x,z$ as the center and $\varepsilon$ as the radius contains in the domain $B_{R_s}\cap\Omega_R$. A direct application of the Green theorem to $\overline{G_D(\cdot, x)}$ and $G_D(\cdot,z)$ in the domain $(B_{R_s}\cap\Omega_R)\setminus(\overline{B_{\varepsilon}(x)}\cup \overline{B_{\varepsilon}(z)})$ yields 
	\begin{eqnarray}\nonumber
		0&=&\int_{(B_{R_s}\cap\Omega_R)\setminus(\overline{B_{\varepsilon}(x)}\cup \overline{B_{\varepsilon}(z)})}\left(\overline{G_D(\xi,x)}\Delta G_D(\xi,z)-\Delta\overline{G_D(\xi,x)}G_D(\xi,z)\right){\rm d}\xi\\\nonumber
		&=&\int_{\Gamma_s\cup(B_{R_s}\cap\Gamma_R)\cup\partial B_{\varepsilon}(x)\cup\partial B_{\varepsilon}(z)}\left(\overline{G_D(\xi,x)}\frac{\partial G_D(\xi,z)}{\partial \nu(\xi)}-\frac{\partial\overline{G_D(\xi,x)}}{\partial \nu(\xi)}G_D(\xi,z)\right){\rm d}s(\xi)\\\label{b5}
		&=&I_1+I_2+I_3+I_4,
	\end{eqnarray}
	where $\nu(\xi)$ denotes the unit downward normal to $B_{R_s}\cap\Gamma_R$ when $\xi\in B_{R_s}\cap\Gamma_R$, and $\nu(\xi)$ denotes the unit normal to $\partial B_{\varepsilon}(x)$,  $\partial B_{\varepsilon}(z)$ into the interior of  $B_{\varepsilon}(x)$, $B_{\varepsilon}(z)$, respectively.  Since $G_D(\xi,x)$ and $G_D(\xi,z)$ vanish on $B_{R_s}\cap\Gamma_R$, we have $I_2=0$. For the item $I_3$, using $G_D(\xi,x)=G_D^s(\xi,x)+\Phi_{\kappa_1}(\xi,x)$ gives that 
	\begin{eqnarray*}\nonumber
		I_3 &=& \int_{\partial B_{\varepsilon}(x)}\left(\overline{\Phi_{\kappa_1}(\xi,x)}\frac{\partial G_D(\xi,z)}{\partial \nu(\xi)}-\frac{\partial\overline{\Phi_{\kappa_1}(\xi,x)}}{\partial \nu(\xi)}G_D(\xi,z)\right){\rm d}s(\xi)\\\nonumber
		&&+\int_{\partial B_{\varepsilon}(x)}\left(\overline{G^s_D(\xi,x)}\frac{\partial G_D(\xi,z)}{\partial \nu(\xi)}-\frac{\partial\overline{G^s_D(\xi,x)}}{\partial \nu(\xi)}G_D(\xi,z)\right){\rm d}s(\xi)\\\label{b6}
		&\to&-G_D(x,z),\quad{\rm as}\;\;\varepsilon\to 0.
	\end{eqnarray*}
	Similarly, we can obtain 
	\begin{eqnarray}\label{b7}
		\lim_{\varepsilon\to 0}I_4=\overline{G_D(z,x)}.
	\end{eqnarray}
	Combining (\ref{b5})--(\ref{b7}) implies that 
	\begin{eqnarray*}
		\int_{\Gamma_s}\left(\overline{G_D(\xi,x)}\frac{\partial G_D(\xi,z)}{\partial \nu(\xi)}-\frac{\partial\overline{G_D(\xi,x)}}{\partial \nu(\xi)}G_D(\xi,z)\right){\rm d}s(\xi)=2{\rm i}{\rm Im} G_D(x,z),
	\end{eqnarray*}
	where we use the reciprocity $G_D(x,z)=G_D(z,x)$ for any $x,z\in \Omega_R$, which is proven in the Lemma 3.1 of \cite{DLLY17}. We can obtain the result for the case $\Gamma_p=\Gamma_r$ by a similar argument and we omit it here. The proof is completed.
\end{proof}

With the help of the above Helmholtz-Kirchhoff identity, we can address the following lemma which plays a key role in the analysis of $\widetilde{{\rm Ind}}_{\alpha}(z)$.

\begin{lemma}\label{lem2}
	For any $x,z\in S$, we have
	\begin{eqnarray}\label{b15}
		&&\kappa_1\int_{\Gamma_s}\overline{G_{\alpha}(x,\xi)}G_{\alpha}(\xi,z){\rm d}s(\xi)={\rm Im}G_{\alpha}(x,z)+\zeta_{\alpha, s}(x,z)\\\label{b16}
	      &&\kappa_1\int_{\Gamma_r}\overline{G_{\alpha}(x,\xi)}G_{\alpha}(\xi,z){\rm d}s(\xi)={\rm Im}G_{\alpha}(x,z)+\zeta_{\alpha, r}(x,z)
	\end{eqnarray}
	where  $|\zeta_{\alpha,s}(x,z)|+|\nabla_x\zeta_{\alpha,s}(x,z)|\leq CR_s^{-1}$, $|\zeta_{\alpha,r}(x,z)|+|\nabla_x\zeta_{\alpha,r}(x,z)|\leq CR_r^{-1}$ uniformly for any $x,z\in S$.
\end{lemma}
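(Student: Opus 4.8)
The plan is to derive the two identities in Lemma~\ref{lem2} from the modified Helmholtz--Kirchhoff identity of Lemma~\ref{lem1} by a Sommerfeld-radiation asymptotic argument, replacing the normal-derivative terms on $\Gamma_p$ ($p\in\{s,r\}$) by $\mathrm{i}\kappa_1$ times the field itself up to a controllable remainder. Concretely, I would start from Lemma~\ref{lem1} applied with the roles of the arguments chosen so that $G_\alpha(\xi,x)=\overline{G_\alpha(\xi,x)}$ pairs with $G_\alpha(\xi,z)$, using the reciprocity $G_\alpha(x,\xi)=G_\alpha(\xi,x)$ (Lemma 3.1 of \cite{DLLY17}) to rewrite the left side of \eqref{b15}--\eqref{b16} as a boundary integral over $\Gamma_p$. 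On $\Gamma_p$ the outward unit normal is essentially the radial direction $\hat\xi$ up to $O(R_p^{-1})$ since $\Gamma_p$ is the upper semicircle of radius $R_p$; the endpoints of the semicircle sit on $\Gamma_0$ and, for $R_p>R$ large, contribute only lower-order boundary terms because $G_\alpha$ and its scattered part decay there. The Sommerfeld radiation condition gives $\partial_{|\xi|}G_\alpha^s(\xi,x)=\mathrm{i}\kappa_1 G_\alpha^s(\xi,x)+O(|\xi|^{-3/2})$ and $G_\alpha^s(\xi,x)=O(|\xi|^{-1/2})$ uniformly, and the same for $G_\alpha(\xi,z)$; substituting these into the bilinear boundary integrand makes the two cross terms combine so that
\begin{eqnarray*}
\overline{G_\alpha(\xi,x)}\frac{\partial G_\alpha(\xi,z)}{\partial\nu(\xi)}-\frac{\partial\overline{G_\alpha(\xi,x)}}{\partial\nu(\xi)}G_\alpha(\xi,z)
=2\mathrm{i}\kappa_1\,\overline{G_\alpha(\xi,x)}\,G_\alpha(\xi,z)+O(R_p^{-2})
\end{eqnarray*}
pointwise on $\Gamma_p$, where the $O(R_p^{-2})$ comes from the product of two $O(R_p^{-1/2})$ and $O(R_p^{-3/2})$ factors together with the $O(R_p^{-1})$ normal-vs-radial discrepancy. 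Integrating over $\Gamma_p$, whose length is $O(R_p)$, yields a remainder of size $O(R_p^{-1})$; dividing by $2\mathrm{i}$ and comparing with Lemma~\ref{lem1} gives \eqref{b15} and \eqref{b16} with $\zeta_{\alpha,p}(x,z)$ equal to $(2\mathrm{i})^{-1}$ times this boundary remainder.

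For the gradient bound on $\nabla_x\zeta_{\alpha,p}$ I would differentiate the boundary integral under the integral sign in the $x$-variable; this is legitimate because $x,z\in S$ stays a fixed positive distance from $\Gamma_p$ (recall $R<R_s\le R_r$ and $S$ lies strictly inside $B_R$), so $G_\alpha(\xi,x)$ is real-analytic in $x$ there and interior elliptic estimates give $|\nabla_x G_\alpha^s(\xi,x)|=O(|\xi|^{-1/2})$, $|\nabla_x\partial_{|\xi|}G_\alpha^s(\xi,x)|=O(|\xi|^{-3/2})$ with the same radiation-type decay (one can obtain these by differentiating the integral representation / using that $\nabla_x G_\alpha^s$ itself solves the same exterior problem with a radiating right-hand side). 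Running the identical asymptotic bookkeeping on the differentiated integrand produces the bound $|\nabla_x\zeta_{\alpha,p}(x,z)|\le CR_p^{-1}$ uniformly on $S$, with the constant depending only on $\kappa_1$, $R$, and the distance from $S$ to $\Gamma_p$.

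The one point that needs genuine care — the main obstacle — is the uniformity and the treatment of the two endpoints of the semicircle $\Gamma_p$ where it meets $\Gamma_0$: there the upward normal is not radial, and one must verify that the decay estimates for $G_\alpha$ and $G_\alpha^s$ hold uniformly up to these endpoints so that the endpoint contributions are genuinely $O(R_p^{-1})$ and not larger. I expect this is handled exactly as in the well-posedness theory of \eqref{b2} cited from \cite{GJ11,DLLY17,QZZ19}, where $G_\alpha^s$ is shown to satisfy the Sommerfeld condition uniformly in all directions of $\mathbb S_+$, together with the fact that near $\Gamma_0\cap\{|x_1|\ge R\}$ the surface $\Gamma_R$ is flat so no singularity is introduced. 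The Neumann case $\alpha=N$ is identical once one notes the boundary term on $\Gamma_R$ still vanishes (now because $\partial_\nu G_N=0$ there), so the whole argument for \eqref{b15}--\eqref{b16} goes through verbatim, which justifies the paper's remark that only the Dirichlet case need be written out.
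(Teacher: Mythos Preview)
Your approach is essentially the same as the paper's: start from Lemma~\ref{lem1}, add and subtract $\mathrm{i}\kappa_1 G_\alpha$ to isolate $2\mathrm{i}\kappa_1\,\overline{G_\alpha(\xi,x)}\,G_\alpha(\xi,z)$ as the main term, define $\zeta_{\alpha,p}$ as the residual boundary integral, and bound it (and its $x$-gradient) via $G_\alpha(\xi,\cdot)=O(|\xi|^{-1/2})$ and $\partial_\nu G_\alpha(\xi,\cdot)-\mathrm{i}\kappa_1 G_\alpha(\xi,\cdot)=O(|\xi|^{-3/2})$ integrated over the arc of length $\pi R_p$. Your ``main obstacle'' is a non-issue, however: $\Gamma_p$ is exactly the upper semicircle of radius $R_p$ centered at the origin, so the outward unit normal $\nu(\xi)$ coincides with the radial direction $\hat\xi$ everywhere on $\Gamma_p$, including at the endpoints $(\pm R_p,0)$; there is no $O(R_p^{-1})$ normal-versus-radial correction to track, and the paper accordingly makes no mention of endpoint contributions.
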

\begin{proof}
	For $\alpha = D$ and any $x,z\in S$, it follows from Lemma \ref{lem1} that 
	\begin{eqnarray*}
		2{\rm i}{\rm Im}G_D(x,z)&=&\int_{\Gamma_s}\left(\overline{G_D(\xi,x)}\frac{\partial G_D(\xi,z)}{\partial \nu(\xi)}-\frac{\partial \overline{G_D(\xi,x)}}{\partial\nu(\xi)}G_D(\xi,z)\right){\rm d}s(\xi)\\
		&=&\int_{\Gamma_s}\Bigg\{\overline{G_D(\xi,x)}\left[\frac{\partial G_D(\xi,z)}{\partial \nu(\xi)}-{\rm i}\kappa_1 G_D(\xi,z)\right]\\
		&&\qquad\;\;\;-G_D(\xi,z)\left[\frac{\overline{\partial G_D(\xi,x)}}{\partial \nu(\xi)}+{\rm i}\kappa_1\overline{G_D(\xi,x)}\right]\Bigg\}{\rm d}s(\xi)\\
		&&\qquad\;\;\;+2{\rm i}\kappa_1\int_{\Gamma_s}\overline{G_D(\xi,x)}G_D(\xi,z){\rm d}s(\xi).
	\end{eqnarray*}
	Thus, a direct application of the reciprocity $G_D(\xi,x)=G_D(x,\xi)$ for $\xi\in \Gamma_s$ and $x\in S$ yields 
	\begin{eqnarray*}
		\kappa_1\int_{\Gamma_s}\overline{G_D(x,\xi)}G_D(\xi,z){\rm d}s(\xi)={\rm Im}G_D(x,z)+\zeta_{D,s}(x,z)\quad{\rm for}\;\;{\forall x,z\in S},
	\end{eqnarray*}
	with 
	\begin{eqnarray*}
		\zeta_{D, s}(x,z)&=&\frac{\rm i}{2}\int_{\Gamma_s}\Bigg\{\overline{G_D(\xi,x)}\left[\frac{\partial G_D(\xi,z)}{\partial \nu(\xi)}-{\rm i}\kappa_1 G_D(\xi,z)\right]\\
		&&\qquad\quad-G_D(\xi,z)\left[\frac{\overline{\partial G_D(\xi,x)}}{\partial \nu(\xi)}+{\rm i}\kappa_1\overline{G_D(\xi,x)}\right]\Bigg\}{\rm d}s(\xi).
	\end{eqnarray*}
Thus, the inequality (\ref{b15}) holds. Due to 
	\begin{eqnarray*}
		G_D(\xi,y)=O(|\xi|^{-\frac{1}{2}}),\qquad\frac{\partial G_D(\xi,y)}{\partial \nu(\xi)}-{\rm i}\kappa_1 G_D(\xi,y)=O(|\xi|^{-\frac{3}{2}})
	\end{eqnarray*}
	for $y\in\{x,z\}$ and $x,z\in S$, it follows that 
	\begin{eqnarray*}
		|\zeta_{D,s}(x,z)|\leq CR_s^{-1}
	\end{eqnarray*}
	uniformly for $x,z\in S$. Since 
	\begin{eqnarray*}
		\frac{\partial G_D(\xi,x)}{\partial x_j}=O(|\xi|^{-\frac{1}{2}}),\qquad\frac{\partial}{\partial x_j}\left[\frac{\partial G_D(\xi,x)}{\partial \nu(\xi)}-{\rm i}\kappa_1 G_D(\xi,x)\right]=O(|\xi|^{-\frac{3}{2}})
	\end{eqnarray*}
	for $j=1,2$ and $x\in S$, it follows that 
	\begin{eqnarray*}
		|\nabla_x\zeta_{D,s}(x,z)|\leq CR_s^{-1}
	\end{eqnarray*}
	uniformly for $x,z\in S$. Thus, we conclude that $|\zeta_{D,s}(x,z)|+|\nabla_x\zeta_{D,s}(x,z)|\leq CR_s^{-1}$ holds. It is obvious that we can prove (\ref{b16}) similarly. The proof is completed. 
\end{proof}
\begin{lemma}\label{lem3}
	Let $V_{\alpha}(x,x_s)$ be the solution to Problem (\ref{b9}), then we have the Green's formula 
	\begin{eqnarray*}\label{b28}
		V_{\alpha}(x,x_s)=\int_{\Gamma\setminus\Gamma_R}\left[\frac{\partial G_{\alpha}(\xi,x)}{\partial\nu(\xi)}V_{\alpha}(\xi,x_s)-\frac{\partial V_{\alpha}(\xi,x_s)}{\partial \nu(\xi)}G_{\alpha}(\xi,x)\right]{\rm d}s(\xi)\qquad {\rm for}\;\;x\in\Omega_1.
	\end{eqnarray*}
\end{lemma}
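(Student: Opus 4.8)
The plan is to obtain the formula from Green's second identity applied to $V_\alpha(\cdot,x_s)$ and the Green's function $G_\alpha(\cdot,x)$ (that is, the total field of (\ref{b2}) with the point source relocated to $x$) on a truncated, punctured version of $\Omega_1$, followed by two limiting passages. First I would fix $x\in\Omega_1$, choose $\rho$ large enough that the bounded portion $\Gamma\setminus\Gamma_R$ and the point $x$ lie well inside the disc $B_\rho$, and choose $\varepsilon>0$ small enough that $\overline{B_\varepsilon(x)}\subset\Omega_1\cap B_\rho$. On $D_{\rho,\varepsilon}:=(\Omega_1\cap B_\rho)\setminus\overline{B_\varepsilon(x)}$ both $V_\alpha(\cdot,x_s)$ and $G_\alpha(\cdot,x)=\Phi_{\kappa_1}(\cdot,x)+G_\alpha^s(\cdot,x)$ are smooth (the only singularity, at $x$, has been excised, and $G_\alpha^s(\cdot,x)$ solves (\ref{b2}) on $\Omega_R\supseteq\Omega_1$) and satisfy the homogeneous Helmholtz equation with wave number $\kappa_1$, so Green's identity forces the integral over $\partial D_{\rho,\varepsilon}=(\Gamma\cap B_\rho)\cup(\partial B_\rho\cap\Omega_1)\cup\partial B_\varepsilon(x)$ of $V_\alpha\,\partial_n G_\alpha-G_\alpha\,\partial_n V_\alpha$ to vanish.

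The crucial point — and the reason $\Gamma_R$ was built as in (\ref{b1}) — is that on $\Gamma\cap\Gamma_R$ both functions satisfy the same homogeneous boundary condition $\mathcal{B}(\cdot)=0$: for $V_\alpha$ this is the third line of (\ref{b9}), while for $G_\alpha$ it follows from $\Gamma\cap\Gamma_R\subseteq\Gamma_R$ together with $\mathcal{B}G_\alpha=\mathcal{B}G_\alpha^s+\mathcal{B}\Phi_{\kappa_1}=0$ on $\Gamma_R$. Hence the integrand vanishes identically on $\Gamma\cap\Gamma_R$, and the contribution of $\Gamma\cap B_\rho$ collapses to the integral over $\Gamma\setminus\Gamma_R$; since the outward normal of $\Omega_1\cap B_\rho$ along $\Gamma$ is $-\nu$, this is where the sign in the asserted identity originates. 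Next I would let $\rho\to\infty$: $V_\alpha$ satisfies the Sommerfeld radiation condition by the last line of (\ref{b9}), and $G_\alpha(\cdot,x)$ does as well (it is a sum of the radiating $G_\alpha^s(\cdot,x)$ and the outgoing $\Phi_{\kappa_1}(\cdot,x)$); writing $V_\alpha\,\partial_n G_\alpha-G_\alpha\,\partial_n V_\alpha=V_\alpha(\partial_n G_\alpha-{\rm i}\kappa_1 G_\alpha)-G_\alpha(\partial_n V_\alpha-{\rm i}\kappa_1 V_\alpha)$ on $\partial B_\rho\cap\Omega_1$ and applying Cauchy--Schwarz together with the Rellich-type $L^2$ bound for $V_\alpha$ and $G_\alpha$ on $\partial B_\rho\cap\Omega_1$ and the $o(|x|^{-1/2})$ decay of the two radiating residuals shows this semicircular-arc integral tends to $0$. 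Finally, letting $\varepsilon\to 0$, the smooth part $G_\alpha^s(\cdot,x)$ contributes nothing while the standard local expansion of $\Phi_{\kappa_1}(\cdot,x)$ near $x$ gives $\int_{\partial B_\varepsilon(x)}\bigl(V_\alpha\,\partial_n\Phi_{\kappa_1}-\Phi_{\kappa_1}\,\partial_n V_\alpha\bigr)\,{\rm d}s\to V_\alpha(x,x_s)$. Assembling the three limits produces exactly the claimed representation; as elsewhere I would present the argument for $\alpha=D$ and note that $\alpha=N$ is identical.

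The step I expect to be the main obstacle is the vanishing of the far-field arc integral as $\rho\to\infty$: one must ensure that the radiation-condition machinery — a uniform $L^2$ bound on $\partial B_\rho\cap\Omega_1$ and the uniform $O(|x|^{-1/2})$/$o(|x|^{-1/2})$ asymptotics — is actually available in the locally-perturbed half-plane geometry for both $V_\alpha$ and $G_\alpha$, which is where one relies on the well-posedness and far-field theory of \cite{DLLY17,GJ11,QZZ19}. The only other thing requiring attention is the elementary bookkeeping that $\Gamma\setminus\Gamma_R$ is bounded (so it lies inside $B_\rho$ for all large $\rho$) and that $\Gamma\cap\Gamma_R$ is precisely the set on which both boundary conditions are homogeneous.
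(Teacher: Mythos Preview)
Your proposal is correct and follows essentially the same route as the paper's proof: apply Green's second identity to $V_\alpha(\cdot,x_s)$ and $G_\alpha(\cdot,x)$ on $(\Omega_1\cap B_\rho)\setminus\overline{B_\varepsilon(x)}$, use the homogeneous boundary condition on $\Gamma\cap\Gamma_R$ to reduce the surface integral to $\Gamma\setminus\Gamma_R$, kill the arc integral via the Sommerfeld condition plus the $L^2(\partial B_\rho^+)$ Rellich bound, and extract $V_\alpha(x,x_s)$ from the $\Phi_{\kappa_1}$ singularity as $\varepsilon\to 0$. You have in fact made explicit the reduction from $\Gamma\cap B_\rho$ to $\Gamma\setminus\Gamma_R$ that the paper leaves implicit, and correctly flagged that the only real work is the $O(1)$ bound for $\int_{\partial B_\rho^+}|V_\alpha|^2\,{\rm d}s$ (and its analogue for $G_\alpha$), which the paper derives in detail from (\ref{b21})--(\ref{b25}).
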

\begin{proof}
	For $\alpha=D$ and any $x\in\Omega_1$, we choose a sufficient large $\rho>0$ and a sufficiently small $\varepsilon>0$ such that the disc $B_{\varepsilon}(x)$ with $x$ as the center and $\varepsilon$ as the radius contains in $\Omega_{\rho}:=B_{\rho}\cap\Omega_1$, where $B_{\rho}$ denotes the disc with the origin as the center and $\rho$ as the radius. We now apply Green's theorem to the functions $V_D(\cdot, x_s)$ and $G_D(\cdot,x)$ in the domain $\Omega_{\rho}\setminus B_{\varepsilon}(x)$ to obtain 
	\begin{eqnarray}\nonumber
		0&=&\int_{\Omega_{\rho}\setminus B_{\varepsilon}(x)}\left[\Delta V_D(\xi,x_s)G_D(\xi,x)-\Delta G_D(\xi,x)V_D(\xi,x_s)\right]{\rm d}\xi\\\nonumber
		&=&\left\{\int_{\partial B_{\rho}^+}-\int_{\Gamma\cap B_{\rho}}+\int_{\partial B_{\varepsilon}(x)}\right\}\left[\frac{\partial V_D(\xi,x_s)}{\partial \nu(\xi)}G_D(\xi,x)-\frac{\partial G_D(\xi,x)}{\partial\nu(\xi)}V_D(\xi,x_s)\right]{\rm d}s(\xi)\\\label{b19}
		&:=&I_1-I_2+I_3,
	\end{eqnarray}
	where $\nu(\xi)$ denotes the unit normal which directs into the exterior of $B_{\rho}$ for $\xi\in\partial B_{\rho}^+$, and directs into the interior of $B_{\varepsilon}(x)$ for $\xi\in \partial B_{\varepsilon}(x)$. For the item $I_1$, our task is to show 
	\begin{eqnarray}\label{b20}
		\lim_{\rho\to \infty} I_1=0.
	\end{eqnarray}
	To accomplish this, using the Sommerfeld radiation condition gives that 
	\begin{eqnarray}\nonumber
		&&\int_{\partial B_{\rho}^+}\left[\left|\frac{\partial V_D(\xi,x_s)}{\partial \nu(\xi)}\right|^2+\kappa_1^2|V_D(\xi,x_s)|^2+2\kappa_1{\rm Im}\left(V_D(\xi,x_s)\frac{\partial \overline{V_D(\xi,x_s)}}{\partial\nu(\xi)}\right)\right]{\rm d}s(\xi)\\\label{b21}
		&&=\int_{\partial B_{\rho}^+}\left|\frac{\partial V_D(\xi,x_s)}{\partial\nu(\xi)}-{\rm i}\kappa_1 V_D(\xi,x_s)\right|^2{\rm d}s(\xi)\to 0\quad{\rm as}\;\;\rho\to\infty.
	\end{eqnarray}
	A direct application of Green's theorem leads to 
	\begin{align}\nonumber
		&\int_{\partial B_{\rho}^+}V_D(\xi,x_s)\frac{\partial\overline{V_D(\xi,x_s)}}{\partial\nu(\xi)}{\rm d}s(\xi)\\\label{b22}
		&=\int_{B_{\rho}\cap\Gamma}V_D(\xi,x_s)\frac{\partial\overline{V_D(\xi,x_s)}}{\partial\nu(\xi)}{\rm d}s(\xi)+\int_{B_{\rho}\cap\Omega_1}\left[|\nabla V_D(\xi,x_s)|^2-\kappa_1^2|V_D(\xi,x_s)|^2\right]{\rm d}\xi.
	\end{align}
	We now insert the imaginary part of (\ref{b22}) into (\ref{b21}) and find that 
	\begin{eqnarray*}\label{b23}
		&&\lim_{\rho\to\infty}\int_{\partial B_{\rho}^+}\left(\left|\frac{\partial V_D(\xi,x_s)}{\partial \nu(\xi)}\right|^2+\kappa_1^2|V_D(\xi,x_s)|^2\right){\rm d}s(\xi)\\
		&&=-2\kappa_1{\rm Im}\int_{\Gamma\setminus \Gamma_R}V_D(\xi,x_s)\frac{\partial\overline{V_D(\xi,x_s)}}{\partial\nu(\xi)}{\rm d}s(\xi),
	\end{eqnarray*}
	where we use the fact $V_D(\xi,x_s)=0$ for $\xi\in\Gamma\cap\Gamma_R$. Thus, we conclude 
	\begin{eqnarray}\label{b24}
		\int_{\partial B_{\rho}^+}|V_D(\xi,x_s)|^2{\rm d}s=O(1),\quad \rho\to\infty.
	\end{eqnarray}
	Similarly, we have 
	\begin{eqnarray}\label{b25}
		\int_{\partial B_{\rho}^+}|G_D(\xi,x)|^2{\rm d}s=O(1),\quad \rho\to\infty.
	\end{eqnarray}
	The item $I_1$ can be rewritten as 
	\begin{eqnarray*}
		I_1&=&\int_{\partial B_{\rho}^+}\left[\frac{\partial V_D(\xi,x_s)}{\partial\nu(\xi)}-{\rm i}\kappa_1 V_D(\xi,x_s)\right]G_D(\xi,x){\rm d}s(\xi)\\
		&&-\int_{\partial B_{\rho}^+}\left[\frac{\partial G_D(\xi,x)}{\partial\nu(\xi)}-{\rm i}\kappa_1 G_D(\xi,x)\right]V_D(\xi,x_s){\rm d}s(\xi)
	\end{eqnarray*}
	which, combining (\ref{b24}), (\ref{b25}), the Sommerfeld radiation condition, and Cauchy-Schwartz inequality, shows that (\ref{b20}) holds true.
	
	For the item $I_3$, by $G_D(\xi,x)=G^s_D(\xi,x)+\Phi_{\kappa_1}(\xi,x)$, we have
	\begin{eqnarray}\nonumber
		I_3&=&\int_{\partial B_{\varepsilon}(x)}\left[\frac{\partial V_D(\xi,x_s)}{\partial \nu(\xi)}G^s_D(\xi,x)-\frac{\partial G^s_D(\xi,x)}{\partial\nu(\xi)}V_D(\xi,x_s)\right]{\rm d}s(\xi)\\\nonumber
		&&+\int_{\partial B_{\varepsilon}(x)}\left[\frac{\partial V_D(\xi,x_s)}{\partial \nu(\xi)}\Phi_{\kappa_1}(\xi,x)-\frac{\partial \Phi_{\kappa_1}(\xi,x)}{\partial\nu(\xi)}V_D(\xi,x_s)\right]{\rm d}s(\xi)\\\label{b26}
		&:=&I_{31}+I_{32}.
	\end{eqnarray}
	Applying the Green's theorem in $B_{\varepsilon}(x)$ shows that $I_{31}=0$. A straightforward calculation with using the mean value theorem shows that 
	\begin{eqnarray}\label{b27}
		\lim_{\varepsilon\to 0}I_{32}= -V_D(x,x_s).
	\end{eqnarray}
	Thus, from (\ref{b19}), (\ref{b20}),(\ref{b26}), and (\ref{b27}), we have the following Green's formula
	\begin{eqnarray*}\label{b28}
		V_D(x,x_s)=\int_{\Gamma\setminus\Gamma_R}\left[\frac{\partial G_D(\xi,x)}{\partial\nu(\xi)}V_D(\xi,x_s)-\frac{\partial V_D(\xi,x_s)}{\partial \nu(\xi)}G_D(\xi,x)\right]{\rm d}s(\xi).
	\end{eqnarray*}
	The proof is completed. 
\end{proof}

Now, we are in position to present the resolution result of the RTM method for recovering an impenetrable locally rough surface.
\begin{theorem}\label{thm1}
	For any $z\in S$, let $\psi_{\alpha}(\xi,z)$ solve 
	\begin{eqnarray}\label{b29}
		\left\{\begin{aligned}
			&\Delta \psi_{\alpha}(\xi,z)+\kappa_1^2\psi_{\alpha}(\xi,z)=0 \qquad\qquad\qquad\;\;\;{\rm in}\;\; \Omega_1, \\
			&\;\mathcal{B}\psi_{\alpha}(\xi,z)=-\mathcal{B}{\rm Im}G_{\alpha}(\xi,z)\qquad\qquad\qquad\;\;{\rm on}\;\; \Gamma\setminus\Gamma_R,\\
			&\;\mathcal{B}\psi_{\alpha}(\xi,z)=0\qquad\qquad\qquad\qquad\qquad\qquad\;{\rm on}\;\; \Gamma\cap\Gamma_R,\\
			&\lim_{|\xi|\rightarrow \infty}|\xi|^{\frac{1}{2}}\left(\partial_{|\xi|} \psi_{\alpha}(\xi,z)-{\rm i}\kappa_1 \psi_{\alpha}(\xi,z)\right)=0,
		\end{aligned}
		\right.
	\end{eqnarray}
	and $\psi_{\alpha}^{\infty}(\hat{\xi},z)$ be the far-field pattern of $\psi_{\alpha}(\xi,z)$. Then for the indicator function $\widetilde{{\rm Ind}}_{\alpha}(z)$, we have
	\begin{eqnarray*}\label{b30}
		\widetilde{{\rm Ind}}_{\alpha}(z)=\kappa_1\int_{{\mathbb S}_+}|\psi_{\alpha}^{\infty}(\hat{\xi},z)|^2{\rm d}s(\hat{\xi})+\eta_{\alpha}(z),\qquad \forall z\in S,
	\end{eqnarray*}
	where $\|\eta_{\alpha}(z)\|_{L^{\infty}(S)}\leq C(R_s^{-1}+R_r^{-1})$ with some constant $C$ depending on $R$.
\end{theorem}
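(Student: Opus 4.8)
The plan is to start from the continuous indicator function
\[
\widetilde{{\rm Ind}}_{\alpha}(z)=-\kappa_1^2{\rm Im}\int_{\Gamma_r}\int_{\Gamma_s}G_{\alpha}(z,x_s)G_{\alpha}(z,x_r)\overline{V_{\alpha}(x_r,x_s)}\,{\rm d}s(x_s)\,{\rm d}s(x_r)
\]
and to rewrite the factor $\overline{V_{\alpha}(x_r,x_s)}$ using the Green's formula from Lemma \ref{lem3}, applied with $x=x_r\in\Gamma_r\subset\Omega_1$. Taking complex conjugates, this expresses $\overline{V_{\alpha}(x_r,x_s)}$ as a boundary integral over $\Gamma\setminus\Gamma_R$ of the Cauchy data of $\overline{G_{\alpha}(\cdot,x_r)}$ and $\overline{V_{\alpha}(\cdot,x_s)}$. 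Substituting this in and interchanging the order of integration, the $x_r$-integral over $\Gamma_r$ will hit only the factors $G_{\alpha}(z,x_r)\,\overline{G_{\alpha}(\xi,x_r)}$ and $G_{\alpha}(z,x_r)\,\partial_{\nu(\xi)}\overline{G_{\alpha}(\xi,x_r)}$; by Lemma \ref{lem2} (equation (\ref{b16}) and its normal derivative in $\xi$), $\kappa_1\int_{\Gamma_r}\overline{G_{\alpha}(\xi,x_r)}G_{\alpha}(x_r,z)\,{\rm d}s(x_r)={\rm Im}\,G_{\alpha}(\xi,z)+\zeta_{\alpha,r}(\xi,z)$ with an $O(R_r^{-1})$ remainder. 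Using reciprocity $G_{\alpha}(z,x_r)=G_{\alpha}(x_r,z)$, this collapses the $\Gamma_r$-integral and produces, up to an $O(R_r^{-1})$ error term, a single boundary integral over $\Gamma\setminus\Gamma_R$ whose integrand contains ${\rm Im}\,G_{\alpha}(\xi,z)$ and its normal derivative paired against the Cauchy data of $\overline{V_{\alpha}(\cdot,x_s)}$; that is precisely the Cauchy data of $\psi_{\alpha}(\cdot,z)$ on $\Gamma\setminus\Gamma_R$, since by (\ref{b29}) $\mathcal B\psi_{\alpha}=-\mathcal B\,{\rm Im}\,G_{\alpha}$ there and $\psi_{\alpha}$, ${\rm Im}\,G_{\alpha}$ satisfy the same Helmholtz equation.

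Next I would handle the remaining $\Gamma_s$-integral the same way. After the first reduction we are left (modulo controlled errors) with $-\kappa_1^2{\rm Im}\int_{\Gamma_s}\big[\text{boundary integral in }\xi\text{ over }\Gamma\setminus\Gamma_R\big]\,{\rm d}s(x_s)$, where the inner integrand couples the Cauchy data of $\psi_{\alpha}(\cdot,z)$ (equivalently $-{\rm Im}\,G_{\alpha}(\cdot,z)$) on $\Gamma\setminus\Gamma_R$ with the Cauchy data of $\overline{V_{\alpha}(\cdot,x_s)}$. I would again invoke the Green's formula of Lemma \ref{lem3}, this time for $V_{\alpha}(\cdot,x_s)$ evaluated at a point $\eta\in\Gamma_s$, to replace the $\Gamma_s$-dependence; then apply (\ref{b15}) (and its $\xi$-normal-derivative version) to collapse $\kappa_1\int_{\Gamma_s}\overline{G_{\alpha}(\eta,\xi')}G_{\alpha}(\xi',z)\,{\rm d}s$ into ${\rm Im}\,G_{\alpha}$ plus an $O(R_s^{-1})$ remainder. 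The net effect is that both far-field/boundary apertures $\Gamma_r$ and $\Gamma_s$ get replaced by the Helmholtz--Kirchhoff kernel ${\rm Im}\,G_{\alpha}$, and the double boundary integral over $(\Gamma\setminus\Gamma_R)\times(\Gamma\setminus\Gamma_R)$ becomes a quadratic form in the Cauchy data of $\psi_{\alpha}(\cdot,z)$ against itself. Using Green's representation once more — or equivalently the identity $\int_{\Gamma\setminus\Gamma_R}\big(\partial_\nu\overline{\psi_\alpha}\,\psi_\alpha-\overline{\psi_\alpha}\,\partial_\nu\psi_\alpha\big)\,{\rm d}s = -2{\rm i}\,{\rm Im}\!\int_{{\mathbb S}_+}\text{(something)}$ derived from the radiation condition at infinity applied to $\psi_\alpha$ — this quadratic form is $-2{\rm i}/\kappa_1$ times $\int_{{\mathbb S}_+}|\psi_\alpha^\infty(\hat\xi,z)|^2\,{\rm d}s(\hat\xi)$. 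Multiplying through by $-\kappa_1^2$ and taking the imaginary part yields the claimed leading term $\kappa_1\int_{{\mathbb S}_+}|\psi_\alpha^\infty(\hat\xi,z)|^2\,{\rm d}s(\hat\xi)$.

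For the error bookkeeping: each application of Lemma \ref{lem2} introduces a remainder of size $O(R_s^{-1})$ or $O(R_r^{-1})$, and these remainders are paired against the Cauchy data of $V_{\alpha}(\cdot,x_s)$ (or of ${\rm Im}\,G_{\alpha}(\cdot,z)$) over the fixed compact arc $\Gamma\setminus\Gamma_R$ and over the bounded apertures; the relevant norms of $V_\alpha$ and $G_\alpha$ on these compact sets are bounded by constants depending only on $R$ (using well-posedness of (\ref{b2}) and (\ref{b9}) and the fact that $S$, hence $z$, stays inside $B_R$), so each such term is $O(R_s^{-1}+R_r^{-1})$ uniformly in $z\in S$. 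Collecting all of them gives $\eta_\alpha(z)$ with $\|\eta_\alpha\|_{L^\infty(S)}\le C(R_s^{-1}+R_r^{-1})$. The Neumann case $\alpha=N$ is identical after replacing Dirichlet traces by normal derivatives throughout, as already flagged before Lemma \ref{lem1}.

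The main obstacle I anticipate is organizing the two successive "aperture-collapsing" steps cleanly: one must be careful that after replacing $V_{\alpha}$ by its Green's representation, the surviving kernel is genuinely the full Cauchy data of $\psi_{\alpha}(\cdot,z)$ (both the function value and its normal derivative on $\Gamma\setminus\Gamma_R$), which requires applying Lemma \ref{lem2} not only to $\overline{G_\alpha(\xi,x_r)}$ but also to $\partial_{\nu(\xi)}\overline{G_\alpha(\xi,x_r)}$ — hence the need for the gradient estimates on $\zeta_{\alpha,p}$ recorded in Lemma \ref{lem2}. The second delicate point is the final passage from the double boundary integral in the Cauchy data of $\psi_\alpha$ to $\int_{{\mathbb S}_+}|\psi_\alpha^\infty|^2$; this is where the Sommerfeld radiation condition and the far-field asymptotics of $\psi_\alpha$ enter, and one should mimic the argument in Lemma \ref{lem3} (the $I_1\to 0$ computation) to justify discarding the contribution of the arc at infinity before identifying the remaining term with the $L^2({\mathbb S}_+)$ norm of the far-field pattern. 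Everything else is routine, granted Lemmas \ref{lem1}--\ref{lem3}.
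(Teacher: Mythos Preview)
Your treatment of the $\Gamma_r$-collapse is exactly what the paper does: insert the Green's representation of Lemma~\ref{lem3} for $\overline{V_\alpha(x_r,x_s)}$, swap the order of integration, and apply Lemma~\ref{lem2} to turn $\kappa_1\!\int_{\Gamma_r}G_\alpha(z,x_r)\overline{G_\alpha(\xi,x_r)}\,{\rm d}s(x_r)$ into ${\rm Im}\,G_\alpha(\xi,z)+\zeta_{\alpha,r}(\xi,z)$. The final passage from a boundary integral to $\kappa_1\!\int_{\mathbb S_+}|\psi_\alpha^\infty|^2$ via Green's theorem and the radiation condition, and the $O(R_s^{-1}+R_r^{-1})$ bookkeeping, are also in line with the paper.

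Where you diverge is the $\Gamma_s$-collapse. You propose to apply Lemma~\ref{lem3} \emph{again} (using reciprocity in $x_s$) and produce a double boundary integral over $(\Gamma\setminus\Gamma_R)\times(\Gamma\setminus\Gamma_R)$, then recognize a ``quadratic form in the Cauchy data of $\psi_\alpha$''. This step is where your sketch becomes vague and partly incorrect: only the $\mathcal B$-trace of $\psi_\alpha$ equals $-\mathcal B\,{\rm Im}\,G_\alpha$ on $\Gamma\setminus\Gamma_R$, not the full Cauchy data, so the identification you rely on does not hold as stated; and it is not explained how the two-point kernel $\overline{V_\alpha(\xi',\xi)}$ in the double integral disappears. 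The paper avoids all of this by a much cleaner observation: set
\[
\phi_\alpha(\xi,z):=\kappa_1\!\int_{\Gamma_s}G_\alpha(z,x_s)\overline{V_\alpha(\xi,x_s)}\,{\rm d}s(x_s),
\]
and note that, by \emph{linearity} of Problem~(\ref{b9}) in the source $x_s$, the function $\overline{\phi_\alpha}(\cdot,z)$ is itself a radiating Helmholtz solution in $\Omega_1$ whose boundary data on $\Gamma\setminus\Gamma_R$ is $-\kappa_1\!\int_{\Gamma_s}\overline{G_\alpha(z,x_s)}G_\alpha(\xi,x_s)\,{\rm d}s(x_s)=-{\rm Im}\,G_\alpha(\xi,z)-\zeta_{\alpha,s}(\xi,z)$ (Lemma~\ref{lem2}). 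Hence $\overline{\phi_\alpha}=\psi_\alpha+\varphi_\alpha$, where $\varphi_\alpha$ carries the small boundary data $-\zeta_{\alpha,s}$. Substituting $\phi_\alpha=\overline{\psi_\alpha}+\overline{\varphi_\alpha}$ and then using the Dirichlet relation $\psi_D=-{\rm Im}\,G_D$ on $\Gamma\setminus\Gamma_R$ reduces the expression directly to $-{\rm Im}\!\int_{\Gamma\setminus\Gamma_R}\psi_D\,\partial_\nu\overline{\psi_D}\,{\rm d}s+\eta_D$, with no double boundary integral ever appearing. This linearity/decomposition step is the missing idea in your plan; once you have it, the rest of your outline goes through.
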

\begin{proof}
	For the case $\alpha=D$, recall that 
	\begin{eqnarray}\nonumber
		\widetilde{{\rm Ind}}_D(z)&=&-\kappa_1^2{\rm Im}\int_{\Gamma_r}\int_{\Gamma_s}G_D(z,x_s)G_D(z,x_r)\overline{V_D(x_r,x_s)}{\rm d}s(x_s){\rm d}s(x_r)\\\label{b31}
		&=&-\kappa_1{\rm Im}\int_{\Gamma_s}G_D(z,x_s)\widetilde{W}_D(z,x_s){\rm d}s(x_s),
	\end{eqnarray}
	where 
	\begin{eqnarray}\label{b32}
		\widetilde{W}_D(z,x_s):=\kappa_1\int_{\Gamma_r}G_D(z,x_r)\overline{V_D(x_r,x_s)}{\rm d}s(x_r).
	\end{eqnarray}
	Substituting the Green's formula presented by Lemma \ref{lem3} into (\ref{b32}) and exchanging the order of integration leads to 
	\begin{eqnarray}\nonumber
		\widetilde{W}_D(z,x_s)&=&\int_{\Gamma\setminus\Gamma_R}\bigg\{\overline{V_D(\xi,x_s)}\frac{\partial}{\partial\nu(\xi)}\left[\kappa_1\int_{\Gamma_r}G_D(z,x_r)\overline{G_D(\xi,x_r)}{\rm d}s(x_r)\right]\\\nonumber
		&&\qquad\qquad-\left[\kappa_1\int_{\Gamma_r}G_D(z,x_r)\overline{G_D(\xi,x_r)}ds(x_r)\right]\frac{\partial\overline{V_D(\xi,x_s)}}{\partial\nu(\xi)}\bigg\}{\rm d}s(\xi)\\\nonumber
		&=&\int_{\Gamma\setminus\Gamma_R}\bigg\{\overline{V_D(\xi,x_s)}\frac{\partial}{\partial\nu(\xi)}\left[{\rm Im}G_D(\xi,z)+\zeta_{D,r}(\xi,z)\right]\\\label{b33}
		&&\qquad\qquad-\left[{\rm Im}G_D(\xi,z)+\zeta_{D,r}(\xi,z)\right]\frac{\partial\overline{V_D(\xi,x_s)}}{\partial\nu(\xi)}\bigg\}{\rm d}s(\xi).
	\end{eqnarray}
	Substituting (\ref{b33}) into (\ref{b31}) gives 
	\begin{eqnarray*}\nonumber
		&&\widetilde{{\rm Ind}}_D(z)\\
		&&=-{\rm Im}\int_{\Gamma\setminus\Gamma_R}\bigg\{\left[\kappa_1\int_{\Gamma_s}G_D(z,x_s)\overline{V_D(\xi,x_s)}{\rm d}s(x_s)\right]\frac{\partial}{\partial\nu(\xi)}\left[{\rm Im}G_D(\xi,z)+\zeta_{D,r}(\xi,z)\right]\\\nonumber
		&&\qquad-\left[{\rm Im}G_D(\xi,z)+\zeta_{D,r}(\xi,z)\right]\frac{\partial}{\partial\nu(\xi)}\left[\kappa_1\int_{\Gamma_s}G_D(z,x_s)\overline{V_D(\xi,x_s)}{\rm d}s(x_s)\right]\bigg\}{\rm d}s(\xi)\\\nonumber
		&&=-{\rm Im}\int_{\Gamma\setminus\Gamma_R}\bigg\{\phi_D(\xi,z)\frac{\partial}{\partial\nu(\xi)}\left[{\rm Im}G_D(\xi,z)+\zeta_{D,r}(\xi,z)\right]\\\label{b34}
		&&\qquad\qquad-\left[{\rm Im}G_D(\xi,z)+\zeta_{D,r}(\xi,z)\right]\frac{\partial \phi_D(\xi,z)}{\partial\nu(\xi)}\bigg\}{\rm d}s(\xi),
	\end{eqnarray*}
	where $\phi_D(\xi,z)$ is defined by
	\begin{eqnarray*}\label{b35}
		\phi_D(\xi,z):=\kappa_1\int_{\Gamma_s}G_D(z,x_s)\overline{V_D(\xi,x_s)}{\rm d}s(x_s).
	\end{eqnarray*}
	Since $V_D(\xi,x_s)$ satisfies Problem (\ref{b9}), it follows from the linearity that $\overline{\phi_D}(\xi,z)$ solves the following problem
	\begin{eqnarray}\label{b36}
		\left\{\begin{aligned}
			&\Delta \overline{\phi_D}(\xi,z)+\kappa_1^2 \overline{\phi_D}(\xi,z)=0 \qquad\qquad\qquad\;\;{\rm in}\;\; \Omega_1, \\
			&\; \overline{\phi_D}(\xi,z)=-{\rm Im}G_D(\xi,z)-\zeta_{D,s}(\xi,z)\qquad\;\;{\rm on}\;\; \Gamma\setminus\Gamma_R,\\
			&\; \overline{\phi_D}(\xi,z)=0\qquad\qquad\qquad\qquad\qquad\qquad\;\;\;\;{\rm on}\;\; \Gamma\cap\Gamma_R,\\
			&\lim_{|\xi|\rightarrow \infty}|\xi|^{\frac{1}{2}}\left(\partial_{|\xi|}  \overline{\phi_D}(\xi,z)-{\rm i}\kappa_1  \overline{\phi_D}(\xi,z)\right)=0,
		\end{aligned}
		\right.
	\end{eqnarray}
	where we use Lemma \ref{lem2} to derive the boundary condition on $\Gamma\setminus\Gamma_R$. By the linearity, it is easy to obtain the following decomposition 
	\begin{eqnarray*}\label{b37}
		\overline{\phi_D}(\xi,z)=\psi_D(\xi,z)+\varphi_D(\xi,z),
	\end{eqnarray*}
	where $\psi_D(\xi,z)$ and $\varphi_D(\xi,z)$ solve Problem (\ref{b36}) with the boundary data $-{\rm Im} G_D(\xi,z)$ and $-\zeta_{D,s}(\xi,z)$ on $\Gamma\setminus\Gamma_R$, respectively. Thus, we conclude 
	\begin{eqnarray}\nonumber
		&&\widetilde{{\rm Ind}}_D(z)=-{\rm Im}\int_{\Gamma\setminus\Gamma_R}\bigg\{\left[\overline{\psi_D}(\xi,z)+\overline{\varphi_D}(\xi,z)\right]\frac{\partial}{\partial\nu(\xi)}\left[{\rm Im}G_D(\xi,z)+\zeta_{D,r}(\xi,z)\right]\\\nonumber
		&&\qquad\qquad-\left[{\rm Im}G_D(\xi,z)+\zeta_{D,r}(\xi,z)\right]\frac{\partial }{\partial\nu(\xi)}\left[\overline{\psi_D}(\xi,z)+\overline{\varphi_D}(\xi,z)\right]\bigg\}{\rm d}s(\xi)\\\nonumber
		&&=-{\rm Im}\int_{\Gamma\setminus\Gamma_R}\left[\overline{\psi_D}(\xi,z)\frac{\partial {\rm Im}G_D(\xi,z)}{\partial\nu(\xi)}-{\rm Im}G_D(\xi,z)\frac{\partial\overline{\psi_D}(\xi,z)}{\partial\nu(\xi)}\right]{\rm d}s(\xi)+\eta_D(z)\\\nonumber
		&&=-{\rm Im}\int_{\Gamma\setminus\Gamma_R}\left[-{\rm Im}G_D(\xi,z)\frac{\partial {\rm Im}G_D(\xi,z)}{\partial\nu(\xi)}+\psi_D(\xi,z)\frac{\partial\overline{\psi_D}(\xi,z)}{\partial\nu(\xi)}\right]{\rm d}s(\xi)+\eta_D(z)\\\label{b38}
		&&=-{\rm Im}\int_{\Gamma\setminus\Gamma_R}\psi_D(\xi,z)\frac{\partial\overline{\psi_D}(\xi,z)}{\partial\nu(\xi)}{\rm d}s(\xi)+\eta_D(z).
	\end{eqnarray}
	Here, $\eta_D(z)$ is defined by 
	\begin{eqnarray}\nonumber
		&&\eta_D(z)=-{\rm Im}\int_{\Gamma\setminus\Gamma_R}\left[\overline{\psi_D}(\xi,z)\frac{\partial \zeta_{D,r}(\xi,z)}{\partial\nu(\xi)}-\zeta_{D,r}(\xi,z)\frac{\partial\overline{\psi_D}(\xi,z)}{\partial\nu(\xi)}\right]{\rm d}s(\xi)\\\nonumber
		&&-{\rm Im}\int_{\Gamma\setminus\Gamma_R}\left[\overline{\varphi_D}(\xi,z)\frac{\partial \zeta_{D,r}(\xi,z)}{\partial\nu(\xi)}-\zeta_{D,r}(\xi,z)\frac{\partial\overline{\varphi_D}(\xi,z)}{\partial\nu(\xi)}\right]{\rm d}s(\xi)\\\label{b39}
		&&-{\rm Im}\int_{\Gamma\setminus\Gamma_R}\left[\overline{\varphi_D}(\xi,z)\frac{\partial {\rm Im}G_D(\xi,z)}{\partial\nu(\xi)}-{\rm Im}G_D(\xi,z)\frac{\partial\overline{\varphi_D}(\xi,z)}{\partial\nu(\xi)}\right]{\rm d}s(\xi).
	\end{eqnarray}
	Applying Green's theorem to the functions $\psi_D(\xi,z)$ and $\overline{\psi_D}(\xi,z)$ in the domain $\Omega\cap B_{\rho}$ yields 
	\begin{align}\nonumber
		&-{\rm Im}\int_{\Gamma\setminus\Gamma_R}\psi_D(\xi,z)\frac{\partial\overline{\psi_D}(\xi,z)}{\partial\nu(\xi)}{\rm d}s(\xi)
		=-{\rm Im}\int_{\partial B^+_{\rho}}\psi_D(\xi,z)\frac{\partial\overline{\psi_D}(\xi,z)}{\partial\nu(\xi)}{\rm d}s(\xi)\\\nonumber &+{\rm Im}\int_{\Omega\cap B_{\rho}}\left[\psi_D(\xi,z)\Delta\overline{ \psi_D}(\xi,z)+|\nabla\psi_D(\xi,z)|^2\right]{\rm d}\xi\\\label{b40}
		&=-{\rm Im}\int_{\partial B^+_{\rho}}\psi_D(\xi,z)\left[\frac{\partial\overline{\psi_D}(\xi,z)}{\partial\nu(\xi)}+{\rm i}\kappa_1\overline{\psi_D}(\xi,z)\right]{\rm d}s(\xi)+{\rm Im}\int_{\partial B^+_{\rho}}{\rm i}\kappa_1 |\psi_D(\xi,z)|^2{\rm d}s(\xi)
	\end{align}
	Note that 
	\begin{eqnarray*}
		\psi_D(\xi,z)=O(|\xi|^{-\frac{1}{2}})\quad{\rm and}\quad \frac{\partial\psi_D(\xi,z)}{\partial\nu(\xi)}-{\rm i}\kappa_1\psi_D(\xi,z)=o(|\xi|^{-\frac{1}{2}}),
	\end{eqnarray*}
	we have
	\begin{eqnarray*}\label{b41}
		\lim_{\rho\to\infty}\int_{\partial B^+_{\rho}}\psi_D(\xi,z)\left[\frac{\partial\overline{\psi_D}(\xi,z)}{\partial\nu(\xi)}+{\rm i}\kappa_1\overline{\psi_D}(\xi,z)\right]{\rm d}s(\xi)=0.
	\end{eqnarray*}
	Since $\psi_D(\xi,z)$ satisfies the Sommerfeld radiation condition, it admits the following asymptotic behavior 
	\begin{eqnarray*}\label{b42}
		\psi_D(\xi,z)=\frac{e^{{\rm i}\kappa_1 |\xi|}}{|\xi|^{\frac{1}{2}}}\left[\psi^{\infty}_D(\hat{\xi},z)+O(|\xi|^{-1})\right]
	\end{eqnarray*}
	which implies 
	\begin{eqnarray}\label{b43}
		\lim_{\rho\to\infty}\int_{\partial B_{\rho}^+}\kappa_1 |\psi_D(\xi,z)|^2{\rm d}s(\xi)=\kappa_1\int_{{\mathbb S}_+}|\psi_D^{\infty}(\hat{\xi},z)|^2{\rm d}s(\hat{\xi}).
	\end{eqnarray}
	Hence, with the help of (\ref{b38}) and (\ref{b40})--(\ref{b43}), we arrive at 
	\begin{eqnarray*}\label{b44}
		\widetilde{{\rm Ind}}_D(z)=\kappa_1\int_{{\mathbb S}_+}|\psi_D^{\infty}(\hat{\xi},z)|^2{\rm d}s(\hat{\xi})+\eta_D(z).
	\end{eqnarray*}
	
	The remaining part of the proof is the estimate of $\eta_D(z)$. For $\xi\in\Gamma\setminus\Gamma_R$ and $z\in S$, it follows from Lemma  \ref{lem2} that 
	\begin{eqnarray}\label{b45}
		|\zeta_{D,r}(\xi,z)|+|\nabla_{\xi}\zeta_{D,r}(\xi,z)|\leq CR_r^{-1}.
	\end{eqnarray}
	Observing that 
	\begin{eqnarray*}
		{\rm Im} G_D(\xi,z)={\rm Im}G^s_D(\xi,z)+\frac{1}{4}J_0(\kappa_1 |\xi-z|),
	\end{eqnarray*}
	where $J_0$ stands for the Bessel function of order zero, it follows from the smoothness of $G_D^s(\xi,z)$ and $J_0(\kappa_1 |\xi-z|)$ that 
	\begin{eqnarray}\label{b46}
		\left|{\rm Im} G_D(\xi,z)\right|+\left|\frac{\partial {\rm Im}G_D(\xi,z)}{\partial\nu(\xi)}\right|\leq C.
	\end{eqnarray}
	Since $\psi_D(\xi,z)$ and $\varphi_D(\xi,z)$ solve Problem (\ref{b36}) with the boundary data $-{\rm Im} G_D(\xi,z)$ and $-\zeta_{D,s}(\xi,z)$ on $\Gamma\setminus\Gamma_R$, respectively, a direct application of the well-posedness of Problem (\ref{b29}) (cf. \cite[Theorem 2.1]{DLLY17}) and the trace theorem shows that 
	\begin{align}\label{b47}
		\|\psi_D(\xi,z)\|_{H^{\frac{1}{2}}(\Gamma\setminus\Gamma_R)}+\left\|\frac{\partial\psi_D(\xi,z)}{\partial\nu(\xi)}\right\|_{H^{-\frac{1}{2}}(\Gamma\setminus\Gamma_R)}
		\lesssim \|{\rm Im}G_D(\xi,z)\|_{H^{\frac{1}{2}}(\Gamma\setminus\Gamma_R)}
		\leq C,
	\end{align}
and
\begin{align}\label{b48}
\|\varphi_D(\xi,z)\|_{H^{\frac{1}{2}}(\Gamma\setminus\Gamma_R)}+\left\|\frac{\partial\varphi_D(\xi,z)}{\partial\nu(\xi)}\right\|_{H^{-\frac{1}{2}}(\Gamma\setminus\Gamma_R)}\lesssim \|\zeta_{D,s}(\xi,z)\|_{H^{\frac{1}{2}}(\Gamma\setminus\Gamma_R)}\lesssim R_s^{-1},
\end{align}
	where the notation $a\lesssim b$ means $a\leq Cb$ for some generic constant $C>0$, which may change step by step.
	Thus, with the aid of (\ref{b39}) and  (\ref{b45})--(\ref{b48}), we can easily obtain
	\begin{eqnarray*}
		\|\eta_D(z)\|_{L^{\infty}(S)}\leq C(R_r^{-1}+R_s^{-1}),
	\end{eqnarray*} 
	with $C$ depending on $R$. The proof is finished.
\end{proof}

\subsection{The far-field reconstruction}
This subsection is devoted to the RTM method with the far-field measurement. To this 
end, we need to establish a mixed reciprocity relation. Let $w^i(x,d)=e^{{\rm i}\kappa_1x\cdot d}$ with $d=(d_1,d_2)^{\rm \top}\in{\mathbb S}_-:=\{x\in{\mathbb R}^2: |x|=1,x_2<0\}$ be the plane wave, then the reflected wave of $w^i$ by the infinite plane $\Gamma_0$ is given by $w_{\alpha}^r(x,d)=-e^{{\rm i}\kappa_1x\cdot d^r}$ for $\alpha=D$ and $w_{\alpha}^r(x,d)=e^{{\rm i}\kappa_1x\cdot d^r}$ for $\alpha=N$ with $d^r=(d_1,-d_2)^{\rm \top}$. We define 
\begin{equation*}\label{b49}
w_{0,\alpha}(x,d):=w^i(x,d)+w_{\alpha}^r(x,d)=\left\{\begin{array}{l}
              e^{{\rm i}\kappa_1x\cdot d}-e^{{\rm i}\kappa_1x\cdot d^r} \qquad\qquad\; \textrm{for}\;\;\; \alpha=D, \\[1mm]
              e^{{\rm i}\kappa_1x\cdot d}+e^{{\rm i}\kappa_1x\cdot d^r} \qquad\qquad\; \textrm{for}\;\; \;\alpha=N,
            \end{array}
\right.
\end{equation*}
which satisfies $w_{0,D}(x,d)=0$ on $\Gamma_0$ and $\partial_{\nu}w_{0,N}(x,d)=0$ on $\Gamma_0$. Then the 
scattering of $w_{0,\alpha}(x,d)$ by the locally rough surface $\Gamma$ can be modelled by
\begin{equation*}\label{b50}
	\left\{\begin{aligned}
		&\Delta w_{\alpha}^s(x,d)+\kappa_1^2w_{\alpha}^s(x,d)=0 \qquad\qquad\qquad\;\textrm{in}\;\; \Omega_1, \\
		&\mathcal{B}w_{\alpha}^s(x,d)=-\mathcal{B}w_{0,\alpha}(x,d)\qquad\qquad\qquad\;\;\;\;\textrm{on}\;\; \Gamma,\\
		&\lim_{|x|\rightarrow \infty}|x|^{\frac{1}{2}}\left(\partial_{|x|} w_{\alpha}^s(x,d)-{\rm i}\kappa_1 w_{\alpha}^s(x,d)\right)=0,\quad
	\end{aligned}
	\right.
\end{equation*}
where $w_{\alpha}^s$ denotes the scattered field and the Sommerfeld radiation condition holds uniformly for all directions $\hat{x}\in{\mathbb S}_+$. 

Let $v_{0,\alpha}(x,x_s)$ be the Dirichlet or Neumann Green's function with respect to $\Gamma_0$, which is given by 
\begin{equation*}\label{b51}
v_{0, \alpha}(x,x_s)=\left\{\begin{array}{l}
              \Phi(x, x_s)-\Phi(x,x_s') \qquad\qquad\; \textrm{for}\;\; \alpha=D, \\[1mm]
               \Phi(x, x_s)+\Phi(x,x_s') \qquad\qquad\; \textrm{for}\;\; \alpha=N. 
            \end{array}
\right.
\end{equation*}
Here, $x_s'$ is the image point of $x_s$ with respect to $\Gamma_0$. We have $v_{0, D}(x,x_s)=0$ on $\Gamma_0$ and $\partial_{\nu}v_{0, N}(x,x_s)=0$ on $\Gamma_0$. Let $v_{0,\alpha}^{\infty}(\hat{x},x_s)$ be the far-field of $v_{0,\alpha}(x,x_s)$, it is easy to see that 
\begin{eqnarray}\label{b52}
v_{0,\alpha}^{\infty}(\hat{x},x_s)=\gamma_1w_{0,\alpha}(x_s,-\hat{x})\qquad {\rm for}\;\; \hat{x}\in{\mathbb S}_+, x_s\in {\mathbb R}^2_+,
\end{eqnarray}
where $\gamma_1:=\frac{e^{\frac{\pi}{4}{\rm i}}}{\sqrt{8\kappa_1\pi}}$. Define $v_{\alpha}^s(x,x_s):=u_{\alpha}^s(x,x_s)-v_{0,\alpha}(x,x_s)+\Phi(x,x_s)$ where $u_{\alpha}^s(x,x_s)$ is the scattered field of Problem (\ref{a5}), then we have $v_{\alpha}^s$ solves
\begin{equation*}\label{bb52}
	\left\{\begin{aligned}
		&\Delta v_{\alpha}^s(x,x_s)+\kappa_1^2v_{\alpha}^s(x,x_s)=0 \qquad\qquad\qquad\;\;\textrm{in}\;\; \Omega_1, \\[1mm]
		&\mathcal{B}v_{\alpha}^s(x,x_s)=-\mathcal{B}v_{0,\alpha}(x,x_s)\qquad\qquad\qquad\;\;\;\;\;\textrm{on}\;\; \Gamma,\\[1mm]
		&\lim_{|x|\rightarrow \infty}|x|^{\frac{1}{2}}\left(\partial_{|x|} v_{\alpha}^s(x,x_s)-{\rm i}\kappa_1 v_{\alpha}^s(x,x_s)\right)=0,\quad
	\end{aligned}
	\right.
\end{equation*}
where the Sommerfeld radiation condition holds uniformly for all directions $\hat{x}\in{\mathbb S}_+$.

\begin{theorem}
For acoustic scattering of plane waves $w_{0,\alpha}(\cdot, -\hat{x})$, $\hat{x}\in{\mathbb S}_+$ and point sources $v_{0,\alpha}(\cdot, x_s)$, $x_s\in\Omega_1$ from a locally rough surface $\Gamma$ we have
\begin{eqnarray*}
v_{\alpha}^{\infty}(\hat{x}, x_s)=\gamma_1 w_{\alpha}^s(x_s, -\hat{x})
\end{eqnarray*}
for $\alpha\in\{D,N\}$, $\hat{x}\in{\mathbb S}_+, x_s\in \Omega_1$ and $x_s'\notin \Omega_1$.
\end{theorem}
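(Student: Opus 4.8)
The plan is to combine a far-field representation of $v_\alpha^s(\cdot,x_s)$ carried on the (compact) local perturbation $\Gamma_c$ of $\Gamma$ with two applications of Green's second identity --- one in the half-space $\Omega_1$, one in the bounded domain trapped between $\Gamma_c$ and $\Gamma_0$. Put $u_\alpha(\cdot,x_s):=v_{0,\alpha}(\cdot,x_s)+v_\alpha^s(\cdot,x_s)$ and $w_\alpha(\cdot,d):=w_{0,\alpha}(\cdot,d)+w_\alpha^s(\cdot,d)$ for the total fields, each solving the Helmholtz equation in $\Omega_1$ with $\mathcal B u_\alpha=0$, $\mathcal B w_\alpha=0$ on $\Gamma$; recall also $v_\alpha^s=-v_{0,\alpha}(\cdot,x_s)$ on $\Gamma$ (for $\alpha=D$), and that on the flat part $\Gamma\setminus\Gamma_c$ all of $v_\alpha^s(\cdot,x_s),\ v_{0,\alpha}(\cdot,x),\ w_\alpha^s(\cdot,d),\ w_{0,\alpha}(\cdot,d)$ carry the boundary condition of $\Gamma_0$. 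I treat $\alpha=D$; the case $\alpha=N$ follows by interchanging Dirichlet and Neumann traces everywhere, all cancellations being of the same type.

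First I would prove the Green's formula
\[
u(x)=\int_{\Gamma_c}\Big(u(y)\frac{\partial v_{0,\alpha}(y,x)}{\partial\nu(y)}-\frac{\partial u(y)}{\partial\nu(y)}\,v_{0,\alpha}(y,x)\Big)\,{\rm d}s(y),\qquad x\in\Omega_1,
\]
valid for any radiating solution $u$ of the Helmholtz equation in $\Omega_1$ carrying the homogeneous boundary condition of $\Gamma_0$ on $\Gamma\setminus\Gamma_c$ (in particular for $u=v_\alpha^s(\cdot,x_s)$ and $u=w_\alpha^s(\cdot,-\hat x)$): apply Green's identity to $u$ and $v_{0,\alpha}(\cdot,x)$ in $(\Omega_1\cap B_\rho)\setminus\overline{B_\eta(x)}$, let $\eta\to0$ (the singularity of $v_{0,\alpha}$ reproduces $u(x)$) and $\rho\to\infty$ (the semicircular term vanishes by Cauchy--Schwarz, the radiation condition and $\|\cdot\|_{L^2(\partial B_\rho^+)}=O(1)$), and observe that the integrand vanishes on $\Gamma\setminus\Gamma_c$. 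Applying this with $u=v_\alpha^s(\cdot,x_s)$, multiplying by $|x|^{1/2}e^{-{\rm i}\kappa_1|x|}$, letting $|x|\to\infty$ and using the far-field asymptotics of $v_{0,\alpha}$ encoded in (\ref{b52}) gives
\[
v_\alpha^\infty(\hat x,x_s)=\gamma_1\int_{\Gamma_c}\Big(v_\alpha^s(y,x_s)\frac{\partial w_{0,\alpha}(y,-\hat x)}{\partial\nu(y)}-\frac{\partial v_\alpha^s(y,x_s)}{\partial\nu(y)}\,w_{0,\alpha}(y,-\hat x)\Big)\,{\rm d}s(y).
\]

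Next, write $w_{0,\alpha}(\cdot,-\hat x)=w_\alpha(\cdot,-\hat x)-w_\alpha^s(\cdot,-\hat x)$ in this integral. The $w_\alpha^s$-contribution is $\int_{\Gamma_c}(v_\alpha^s\partial_\nu w_\alpha^s-\partial_\nu v_\alpha^s\,w_\alpha^s)\,{\rm d}s$, which equals zero by Green's identity in $\Omega_1\cap B_\rho$ (both $v_\alpha^s(\cdot,x_s)$ and $w_\alpha^s(\cdot,-\hat x)$ radiate, the integrand vanishing on $\Gamma\setminus\Gamma_c$). In the term with $w_\alpha$, use $w_\alpha=0$ and $v_\alpha^s=-v_{0,\alpha}(\cdot,x_s)$ on $\Gamma$ and add the vanishing quantity $\int_{\Gamma_c}\partial_\nu v_{0,\alpha}(y,x_s)w_\alpha(y,-\hat x)\,{\rm d}s(y)$; this rewrites $v_\alpha^\infty(\hat x,x_s)$ as $\gamma_1\int_{\Gamma_c}\big(w_\alpha(y,-\hat x)\partial_\nu v_{0,\alpha}(y,x_s)-v_{0,\alpha}(y,x_s)\partial_\nu w_\alpha(y,-\hat x)\big)\,{\rm d}s(y)$. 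Splitting $w_\alpha=w_{0,\alpha}+w_\alpha^s$ once more, the $w_\alpha^s$-part equals $\gamma_1 w_\alpha^s(x_s,-\hat x)$ by the Green's formula above with $u=w_\alpha^s(\cdot,-\hat x)$ and $x=x_s$, and the $w_{0,\alpha}$-part equals $\gamma_1 J$ with $J:=\int_{\Gamma_c}\big(w_{0,\alpha}(y,-\hat x)\partial_\nu v_{0,\alpha}(y,x_s)-v_{0,\alpha}(y,x_s)\partial_\nu w_{0,\alpha}(y,-\hat x)\big)\,{\rm d}s(y)$. So it remains to show $J=0$.

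This last step is where the hypotheses $x_s\in\Omega_1$, $x_s'\notin\Omega_1$ are used, and I expect it to be the main obstacle. Since $w_{0,\alpha}(\cdot,-\hat x)$ and $v_{0,\alpha}(\cdot,x_s)$ satisfy the boundary condition of $\Gamma_0$, the integrand of $J$ vanishes on the flat segment $\Sigma\subset\Gamma_0$ that closes $\Gamma_c$ into the boundary of a bounded domain $D_0$, so $J=\int_{\partial D_0}(\cdots)\,{\rm d}s$. From the definitions of $\Omega_1$, $\Omega_2$ and the reflection $x_s'$ of $x_s$ across $\Gamma_0$ one checks that the two hypotheses force $x_{s,2}>0$, that the portion of $D_0$ above $\Gamma_0$ lies in $\Omega_2$ while the portion below $\Gamma_0$ lies in $\Omega_1$, and hence that neither singular point $x_s$ nor $x_s'$ of $v_{0,\alpha}(\cdot,x_s)$ belongs to $\overline{D_0}$; as $w_{0,\alpha}(\cdot,-\hat x)$ is entire, Green's second identity in $D_0$ then gives $J=0$, and $v_\alpha^\infty(\hat x,x_s)=\gamma_1 w_\alpha^s(x_s,-\hat x)$ follows. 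The remaining care is routine: truncating the far-field representation to $\Gamma_c$ and making all semicircular boundary terms vanish rely on the radiating behaviour and uniform decay of the scattered fields guaranteed by the well-posedness theory for locally rough surfaces.
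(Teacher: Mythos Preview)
Your proof is correct and follows essentially the same route as the paper: your Green's representation over $\Gamma_c$ is the paper's (b60), the vanishing of the $w_\alpha^s$-contribution is (b56), the recovery of $\gamma_1 w_\alpha^s(x_s,-\hat x)$ from the Green's formula is (b61), and your $J=0$ is exactly (b55), proved by the same Green's identity in the bounded region trapped between $\Gamma_c$ and $\Gamma_0$ (the paper splits this region into $D_1=\Omega_2\cap\mathbb R^2_+$ and $D_2=\Omega_1\cap\mathbb R^2_-$). The only difference is organizational---you substitute sequentially while the paper assembles the four identities into $\int_\Gamma[v_D\,\partial_\nu w_D-w_D\,\partial_\nu v_D]\,{\rm d}s=0$ via the vanishing total fields---and you should note explicitly that your Green's formula for $v_{0,\alpha}(\cdot,x)$ requires $x'\notin\Omega_1$ (automatic for $|x|\to\infty$ with $\hat x\in\mathbb S_+$, and the stated hypothesis at $x=x_s$).
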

\begin{proof}
Let $D_1:=\Omega_2\cap{\mathbb R}^2_+$ and $D_2:=\Omega_1\cap{\mathbb R}^2_-$. For $\alpha = D$, $x_s\in\Omega_1$ and $x_s'\notin\Omega_1$, a direct application of Green formula yields  
\begin{eqnarray}\nonumber
0&=&\int_{D_1}\left[v_{0,D}(\xi,x_s)\Delta w_{0,D}(\xi, -\hat{x})-w_{0,D}(\xi, -\hat{x})\Delta v_{0,D}(\xi,x_s) \right]{\rm d}\xi\\\label{b53}
&=&\int_{\partial D_1\setminus\Gamma_0}\left[v_{0,D}(\xi,x_s)\frac{\partial w_{0,D}(\xi, -\hat{x})}{\partial\nu(\xi)}-w_{0,D}(\xi, -\hat{x})\frac{\partial v_{0,D}(\xi,x_s)}{\partial\nu(\xi)}\right]{\rm d}s(\xi)
\end{eqnarray}
and 
\begin{eqnarray}\label{b54}
\int_{\partial D_2\setminus\Gamma_0}\left[v_{0,D}(\xi,x_s)\frac{\partial w_{0,D}(\xi, -\hat{x})}{\partial\nu(\xi)}-w_{0,D}(\xi, -\hat{x})\frac{\partial v_{0,D}(\xi,x_s)}{\partial\nu(\xi)}\right]{\rm d}s(\xi)=0.
\end{eqnarray}
Combining (\ref{b53}), (\ref{b54}) and the fact $v_{0,D}$ and $w_{0,D}$ vanish on $\Gamma_0$ gives that 
\begin{eqnarray}\label{b55}
\int_{
\Gamma}\left[v_{0,D}(\xi,x_s)\frac{\partial w_{0,D}(\xi, -\hat{x})}{\partial\nu(\xi)}-w_{0,D}(\xi, -\hat{x})\frac{\partial v_{0,D}(\xi,x_s)}{\partial\nu(\xi)}\right]{\rm d}s(\xi)=0.
\end{eqnarray}
It follows from the Green's formula, the Sommerfeld radiation condition and the Cauchy-Schwartz inequality that 
\begin{eqnarray}\label{b56}
\int_{
\Gamma}\left[v_{D}^s(\xi,x_s)\frac{\partial w^s_{D}(\xi, -\hat{x})}{\partial\nu(\xi)}-w^s_{D}(\xi, -\hat{x})\frac{\partial v^s_{D}(\xi,x_s)}{\partial\nu(\xi)}\right]{\rm d}s(\xi)=0.
\end{eqnarray}
For $x\in\Omega_1$, we choose a sufficient large $\rho>0$ and a sufficient small $\varepsilon>0$ such that $B_{\varepsilon}(x)\subset\Omega_{\rho}:=B_{\rho}\cap\Omega_1$. By the Green's formula, we obtain
\begin{eqnarray*}\nonumber
0&=&\int_{\Omega_{\rho}\setminus\overline{B_{\varepsilon}(x)}}\left[v_{0,D}(\xi,x)\Delta v^s_{D}(\xi, x_s)-v^s_{D}(\xi, x_s)\Delta v_{0,D}(\xi,x) \right]{\rm d}\xi\\\label{b57}
&=&\left(\int_{\partial B_{\rho}^+}+\int_{\partial B_{\varepsilon}(x)}-\int_{\Gamma\cap B_{\rho}}\right)\left[v_{0,D}(\xi,x)\frac{\partial v_{D}^s(\xi, x_s)}{\partial\nu(\xi)}-v_{D}^s(\xi, x_s)\frac{\partial v_{0,D}(\xi,x)}{\partial\nu(\xi)}\right]{\rm d}s(\xi)\\
&=&I_1+I_2-I_3.
\end{eqnarray*}
By a similar argument with (\ref{b20}) and (\ref{b27}), we have
\begin{eqnarray*}\label{b58}
\lim_{\rho\to\infty}I_1=0\qquad{\rm and}\qquad \lim_{\varepsilon\to 0}I_2=-v_D^s(x,x_s).
\end{eqnarray*}
Hence, we obtain
\begin{eqnarray*}\label{b59}
v_D^s(x,x_s)=\int_{\Gamma}\left[v_D^s(\xi,x_s)\frac{\partial v_{0,D}(\xi,x)}{\partial\nu(\xi)}-v_{0,D}(\xi,x)\frac{\partial v_D^s(\xi,x_s)}{\partial\nu(\xi)}\right]{\rm d}s(\xi),
\end{eqnarray*}
which implies 
\begin{align}\label{b60}
v_D^{\infty}(\hat{x},x_s)=\gamma_1\int_{\Gamma}\left[v_D^s(\xi,x_s)\frac{\partial w_{0,D}(\xi,-\hat{x})}{\partial\nu(\xi)}-w_{0,D}(\xi,-\hat{x})\frac{\partial v_D^s(\xi,x_s)}{\partial\nu(\xi)}\right]{\rm d}s(\xi),
\end{align}
where we use (\ref{b52}). Similarly, we have
\begin{align}\label{b61}
w_D^s(x_s,-\hat{x})=\int_{\Gamma}\left[w_D^s(\xi,-\hat{x})\frac{\partial v_{0,D}(\xi,x_s)}{\partial\nu(\xi)}-v_{0,D}(\xi,x_s)\frac{\partial w_D^s(\xi,-\hat{x})}{\partial\nu(\xi)}\right]{\rm d}s(\xi),
\end{align}
Combining (\ref{b55}),(\ref{b56}),(\ref{b60}) and (\ref{b61}) leads to
\begin{eqnarray*}\label{b62}
&&\gamma_1^{-1}v_D^{\infty}(\hat{x},x_s)-w_D^s(x_s,-\hat{x})\\
&&=\int_{\Gamma}\left[v_D(\xi,x_s)\frac{\partial w_{D}(\xi,-\hat{x})}{\partial\nu(\xi)}-w_{D}(\xi,-\hat{x})\frac{\partial v_D(\xi,x_s)}{\partial\nu(\xi)}\right]{\rm d}s(\xi)\\
&&=0,
\end{eqnarray*}
where the fact has been used that the total fields $v_D(\xi,x_s):=v_{0,D}(\xi,x_s)+v_D^s(\xi,x_s)$ and $w_D(\xi,-\hat{x}):=w_{0,\alpha}(\xi,-\hat{x})+w_D^s(\xi,-\hat{x})$ vanish on $\Gamma$. The proof is thus complete.
\end{proof}

With the above mixed reciprocity relation, we are in a position to present the RTM method based on far-field measurements. To this end, we first introduce some notations. Note that the wave fields $w_{\alpha}^s(x,d)$, $w_{\alpha}^{\infty}(\hat{x},d)$, $w_{\alpha}(x,d)$, $v_{\alpha}^s(x,x_s)$, $v_{\alpha}^{\infty}(\hat{x},x_s)$, $v_{\alpha}(x,x_s)$ depend on the rough surface $\Gamma$. For clarity, we write 
$w_{\alpha}^s(x,d,\Gamma)$, $w_{\alpha}^{\infty}(\hat{x},d,\Gamma)$, $w_{\alpha}(x,d,\Gamma)$, $v_{\alpha}^s(x,x_s,\Gamma)$, $v_{\alpha}^{\infty}(\hat{x},x_s,\Gamma)$, $v_{\alpha}(x,x_s,\Gamma)$ to express explicitly the dependence of the wave fields on $\Gamma$.

\begin{theorem}\label{thm26}
For the indicator function $\widetilde{{\rm Ind}}_{\alpha}(z)$ with $\alpha\in\{D,N\}$, we have the following limit identity
\begin{eqnarray}\nonumber
&&\lim_{R_s\to\infty}\lim_{R_r\to\infty}\widetilde{{\rm Ind}}_{\alpha}(z)=-\kappa_1^2|\gamma_1|^2{\rm Im} \bigg\{\gamma_1\int_{{\mathbb S_+}}\int_{{\mathbb S_+}}w_{\alpha}(z,-\hat{x}_r,\Gamma_R)w_{\alpha}(z,-\hat{x}_s,\Gamma_R)\\\label{b63}
&&\qquad\times\left[\overline{w_{\alpha}^{\infty}(\hat{x}_s,-\hat{x}_r,\Gamma)}-\overline{w_{\alpha}^{\infty}(\hat{x}_s,-\hat{x}_r,\Gamma_R)}\right]{\rm d}s(\hat{x}_r){\rm d}s(\hat{x}_s)\bigg\}:=\widehat{{\rm Ind}}_{\alpha}(z).
\end{eqnarray}
\end{theorem}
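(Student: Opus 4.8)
The plan is to pass to the limit $R_r\to\infty$ first, then $R_s\to\infty$, in the continuous indicator function
\[
\widetilde{{\rm Ind}}_{\alpha}(z)=-\kappa_1^2{\rm Im}\int_{\Gamma_r}\int_{\Gamma_s}G_{\alpha}(z,x_s)G_{\alpha}(z,x_r)\overline{V_{\alpha}(x_r,x_s)}\,{\rm d}s(x_s){\rm d}s(x_r),
\]
using the asymptotics of the various Green's functions as the radii of $\Gamma_s,\Gamma_r$ tend to infinity together with the mixed reciprocity relation just established. First I would parametrise $x_r=R_r\hat{x}_r$ with $\hat{x}_r\in{\mathbb S}_+$ and use the far-field expansions to replace each $R_r$-dependent factor by its leading term: $G_{\alpha}(z,x_r)\sim (e^{{\rm i}\kappa_1 R_r}/R_r^{1/2})\,G_{\alpha}^{\infty}(\hat{x}_r,z)$, and likewise for $V_{\alpha}(x_r,x_s)$, whose far-field pattern in the receiver variable is $V_{\alpha}^{\infty}(\hat{x}_r,x_s)=u_{\alpha}^{\infty}(\hat{x}_r,x_s)-G_{\alpha}^{\infty,s}(\hat{x}_r,x_s)$. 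The surface element is ${\rm d}s(x_r)=R_r\,{\rm d}s(\hat{x}_r)$, so the two powers $R_r^{-1/2}$ combine with $R_r$ to give a finite limit; the oscillatory factors $e^{{\rm i}\kappa_1 R_r}$ cancel against $\overline{e^{{\rm i}\kappa_1 R_r}}$ in the conjugated term, which is exactly why the complex conjugate appears in the RTM data. The same calculation in the source variable, letting $R_s\to\infty$ afterwards, produces a double integral over ${\mathbb S}_+\times{\mathbb S}_+$ of far-field patterns.

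Next I would identify each far-field pattern with a scattered plane-wave field via reciprocity. By Lemma~\ref{lem1}'s reciprocity $G_{\alpha}(z,x)=G_{\alpha}(x,z)$ and the analogue of the mixed reciprocity relation applied to the special surface $\Gamma_R$, $G_{\alpha}^{\infty}(\hat{x},z)=\gamma_1\,G_{\alpha}(z,-\hat{x})$-type identities should convert $G_{\alpha}^{\infty}(\hat{x}_r,z)$ and $G_{\alpha}^{\infty}(\hat{x}_s,z)$ into the total fields $w_{\alpha}(z,-\hat{x}_r,\Gamma_R)$ and $w_{\alpha}(z,-\hat{x}_s,\Gamma_R)$ that appear on the right of (\ref{b63}), each contributing a factor $\gamma_1$ (hence the $|\gamma_1|^2\gamma_1$ prefactor: two factors $\gamma_1$ from the $G$-terms and one from expanding $V^{\infty}$). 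For the conjugated data factor, the theorem just proved gives $v_{\alpha}^{\infty}(\hat{x}_r,x_s,\Gamma)=\gamma_1 w_{\alpha}^s(x_s,-\hat{x}_r,\Gamma)$; iterating this in the $x_s$ variable (or invoking the reciprocity of far-field patterns, $w_{\alpha}^{\infty}(\hat{x}_s,-\hat{x}_r,\Gamma)=w_{\alpha}^{\infty}(\hat{x}_r,-\hat{x}_s,\Gamma)$) I expect $\overline{V_{\alpha}^{\infty}(\hat{x}_r,x_s)}$ to become $\overline{w_{\alpha}^{\infty}(\hat{x}_s,-\hat{x}_r,\Gamma)}-\overline{w_{\alpha}^{\infty}(\hat{x}_s,-\hat{x}_r,\Gamma_R)}$, the difference reflecting that $V_{\alpha}=u_{\alpha}^s-G_{\alpha}^s$ subtracts off the reference-surface field. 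Collecting the constants and the overall sign from $-\kappa_1^2$ and from ${\rm Im}$ yields precisely $\widehat{{\rm Ind}}_{\alpha}(z)$.

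The main obstacle will be justifying the interchange of the limits $R_r\to\infty$, $R_s\to\infty$ with the integrations, i.e. upgrading the pointwise far-field asymptotics to a limit of the full integrals uniformly in $z\in S$. For this I would use that the error terms $O(|\xi|^{-1})$ in the far-field expansions are uniform for $z$ ranging over the compact set $S$ (since $S\Subset B_R$ and all Green's functions involved are smooth there), combined with the uniform boundedness of $u_{\alpha}^{\infty}(\cdot,x_s)$, $G_{\alpha}^{\infty,s}(\cdot,x_s)$ and their $x_s$-dependence — which itself follows from the well-posedness estimates cited from \cite{DLLY17} and the continuous dependence of the far-field pattern on the incident source. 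A secondary technical point is the use of reciprocity for the pair $(\Gamma,\Gamma_R)$ appearing in the data $V_\alpha$: one must check that the mixed reciprocity relation holds in the form needed when the ``reference'' background is the curved surface $\Gamma_R$ rather than the flat plane $\Gamma_0$; this is a straightforward repetition of the Green's-theorem argument in the proof of the preceding theorem, with $\Gamma_0$ replaced by $\Gamma_R$ and the half-plane replaced by $\Omega_R$, using that $G_\alpha$ satisfies the homogeneous boundary condition on $\Gamma_R$. Once these are in place the rest is bookkeeping of constants.
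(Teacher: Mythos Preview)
Your proposal is correct and follows essentially the same route as the paper. The paper writes $G_{\alpha}(z,x)=v_{0,\alpha}(x,z)+v_{\alpha}^s(x,z,\Gamma_R)$ and $V_{\alpha}(x_r,x_s)=v_{\alpha}^s(x_r,x_s,\Gamma)-v_{\alpha}^s(x_r,x_s,\Gamma_R)$, applies the mixed reciprocity relation (to both $\Gamma$ and $\Gamma_R$, exactly as you anticipate) together with~(\ref{b52}) to obtain the far-field expansions $G_{\alpha}(z,x)=\gamma_1\,e^{{\rm i}\kappa_1|x|}|x|^{-1/2}\big[w_{\alpha}(z,-\hat{x},\Gamma_R)+O(|x|^{-1})\big]$ and $V_{\alpha}(x_r,x_s)=\gamma_1\,e^{{\rm i}\kappa_1|x_r|}|x_r|^{-1/2}\big[w_{\alpha}^s(x_s,-\hat{x}_r,\Gamma)-w_{\alpha}^s(x_s,-\hat{x}_r,\Gamma_R)+O(|x_r|^{-1})\big]$, then passes to the limit first in $R_r$ and afterwards in $R_s$ (the second limit using the far-field of $w_{\alpha}^s(x_s,-\hat{x}_r,\cdot)$ directly, so the far-field reciprocity you mention as an alternative is not invoked); the paper treats the passage to the limit under the integral as routine and does not spell out the uniformity argument you describe.
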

\begin{proof}
For $\alpha=D$ and $x\in\{x_r,x_s\}$, it follows from the Sommerfeld radiation condition, the mixed reciprocity relation, and (\ref{b52}) that 
\begin{eqnarray}\nonumber
G_D(z,x)&=&v_{0,D}(x,z)+v_D^s(x,z,\Gamma_R)\\\nonumber
&=&\frac{e^{{\rm i}\kappa_1|x|}}{|x|^{\frac{1}{2}}}\left[v_{0,D}^{\infty}(\hat{x},z)+v_D^{\infty}(\hat{x},z,\Gamma_R)+O\left(\frac{1}{|x|}\right)\right]\\\nonumber
&=&\gamma_1\frac{e^{{\rm i}\kappa_1|x|}}{|x|^{\frac{1}{2}}}\left[w_{0,D}(z,-\hat{x})+w_D^s(z, -\hat{x},\Gamma_R)+O\left(\frac{1}{|x|}\right)\right]\\\label{b64}
&=&\gamma_1\frac{e^{{\rm i}\kappa_1|x|}}{|x|^{\frac{1}{2}}}\left[w_D(z, -\hat{x},\Gamma_R)+O\left(\frac{1}{|x|}\right)\right]
\end{eqnarray}
where we use the reciprocity $G_D(z,x)=G_D(x,z)$ for $x,z\in\Omega_{1,R}$. For the data $V_D(x_r,x_s)$, we have
\begin{eqnarray}\nonumber
V_D(x_r,x_s)&=&u_D^s(x_r,x_s,\Gamma)-G_D^s(x_r,x_s,\Gamma_R)\\\nonumber
&=&v_D^s(x_r,x_s,\Gamma)-v_D^s(x_r,x_s,\Gamma_R)\\\nonumber
&=&\frac{e^{{\rm i}\kappa_1|x_r|}}{|x_r|^{\frac{1}{2}}}\left[v_{D}^{\infty}(\hat{x}_r,x_s,\Gamma)-v_D^{\infty}(\hat{x}_r,x_s,\Gamma_R)+O\left(\frac{1}{|x_r|}\right)\right]\\\label{b65}
&=&\gamma_1\frac{e^{{\rm i}\kappa_1|x_r|}}{|x_r|^{\frac{1}{2}}}\left[w_{D}^s(x_s,-\hat{x}_r,\Gamma)-w_D^s(x_s, -\hat{x}_r,\Gamma_R)+O\left(\frac{1}{|x_r|}\right)\right].
\end{eqnarray}
By (\ref{b64}) and (\ref{b65}), we obtain
\begin{eqnarray}\nonumber
&&\lim_{R_r\to\infty}\int_{\Gamma_r}G_D(z,x_r)\overline{V_D(x_r,x_s)}{\rm d}s(x_r)\\\nonumber
&&=|\gamma_1|^2\lim_{R_r\to\infty}\int_{\Gamma_r}\frac{1}{|x_r|}\left[w_D(z, -\hat{x}_r,\Gamma_R)+O\left(\frac{1}{|x_r|}\right)\right]\\\nonumber
&&\times\left[\overline{w_{D}^s(x_s,-\hat{x}_r,\Gamma)}-\overline{w_D^s(x_s, -\hat{x}_r,\Gamma_R)}+O\left(\frac{1}{|x_r|}\right)\right]{\rm d}s(x_r)\\\label{b66}
&&=|\gamma_1|^2\int_{{\mathbb S}_+}w_D(z, -\hat{x}_r,\Gamma_R)\left[\overline{w_{D}^s(x_s,-\hat{x}_r,\Gamma)}-\overline{w_D^s(x_s, -\hat{x}_r,\Gamma_R)}\right]{\rm d}s(\hat{x}_r)
\end{eqnarray}
Since 
\begin{eqnarray*}\nonumber
&&w_{D}^s(x_s,-\hat{x}_r,\Gamma)-w_D^s(x_s, -\hat{x}_r,\Gamma_R)\\\label{b67}
&&=\frac{e^{{\rm i}\kappa_1|x_s|}}{|x_s|^{\frac{1}{2}}}\left[w_D^{\infty}(\hat{x}_s,-\hat{x}_r,\Gamma)-w_D^{\infty}(\hat{x}_s,-\hat{x}_r,\Gamma_R)+O\left(\frac{1}{|x_s|}\right)\right],
\end{eqnarray*}
combining with (\ref{b64}) yields that 
\begin{eqnarray}\nonumber
&&\lim_{R_s\to\infty}\int_{\Gamma_s}G_D(z,x_s)\left[\overline{w_D^s(x_s,-\hat{x}_r,\Gamma)}-\overline{w_D^s(x_s,-\hat{x}_r,\Gamma_R)}\right]{\rm d}s(x_s)\\\nonumber
&&=\gamma_1\lim_{R_s\to\infty}\int_{\Gamma_s}\frac{1}{|x_s|}\left[w_D(z, -\hat{x}_s,\Gamma_R)+O\left(\frac{1}{|x_s|}\right)\right]\\\nonumber
&&\quad\times\left[\overline{w_{D}^{\infty}(\hat{x}_s,-\hat{x}_r,\Gamma)}-\overline{w_D^{\infty}(\hat{x}_s, -\hat{x}_r,\Gamma_R)}+O\left(\frac{1}{|x_s|}\right)\right]{\rm d}s(x_s)\\\label{b68}
&&=\gamma_1\int_{{\mathbb S}_+}w_D(z, -\hat{x}_s,\Gamma_R)\left[\overline{w_{D}^{\infty}(\hat{x}_s,-\hat{x}_r,\Gamma)}-\overline{w_D^{\infty}(\hat{x}_s, -\hat{x}_r,\Gamma_R)}\right]{\rm d}s(\hat{x}_s).
\end{eqnarray}
A direct application of (\ref{b66}) and (\ref{b68}) shows that the limit identity (\ref{b63}) holds for $\alpha=D$. This completes the proof.
\end{proof}

\section{The RTM for penetrable locally rough interfaces}
\setcounter{equation}{0}
The destination of this section is to develop the RTM method for penetrable, locally rough interfaces. This section consists of two subsections. In the first subsection, we will introduce the RTM method based on near-field associated with point sources incidence, and in the second subsection we will present a mixed reciprocity relation which leads to the RTM method based on far-field data which corresponds to the plane wave incidence.

\subsection{The near-filed reconstruction}
As shown in Figure \ref{f2}, 
let $\Gamma$ be the locally rough interface and $S$ be the sampling domain which contains the local perturbation of $\Gamma$. We choose a sufficient large $R$ and define $\Gamma_R$ by (\ref{b1}) so that the sampling domain $S$ lies totally above $\Gamma_R$. We suppose that there are $N_s$ point sources $x_s$ uniformly distributed on $\Gamma_s$ and $N_r$ receivers $x_r$ uniformly distributed on $\Gamma_r$. Here, $\Gamma_s$ and $\Gamma_r$ denote the circle with the origin as the center and $R_s$, $R_r$ as the radius, respectively. We suppose a priori that $R<R_s\leq R_r$.

\begin{figure}[htbp]
	\centering
	\includegraphics[width=5in, height=3in]{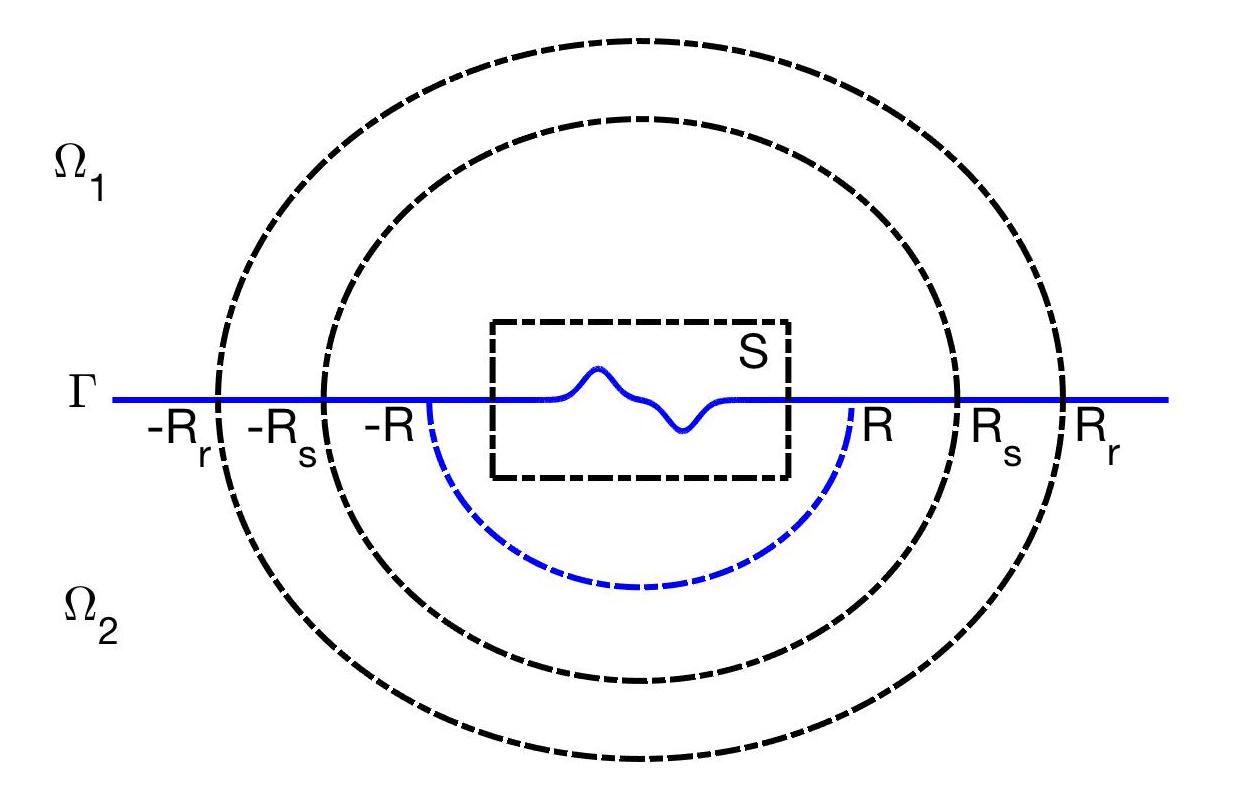}
	\caption{The setting of RTM method for the penetrable case.}
	\label{f2} 
\end{figure}

To establish the mathematic justification of the RTM method, we first introduce the Green's function $G_P$ associated with the two-dimensional Helmholtz equation in a two-layered medium separated by $\Gamma_R$, which satisfies 
\be\label{d1}
\left\{\begin{array}{lll}
	\Delta G_P(x,x_s)+\kappa_{P}^2(x)G_P(x,x_s)=-\delta_{x_s}(x) & \textrm{in}\;\;\R^2, \\[2mm]
	\lim\limits_{|x|\rightarrow \infty}|x|^{\frac{1}{2}}\left(\partial_{|x|} G_P(x,x_s)
	-{\rm i}\kappa_{P}(x)G_P(x,x_s)\right)=0 
\end{array}
\right.
\en
in the distributional sense and the Sommerfeld radiation condition uniformly for all directions 
$\hat{x}\in\mathbb{S}$. Here, $x_s\in\R^2\se\G_R$ and the wave number $\kappa_P(x)$ is defined by $\kappa_P(x):=\kappa_1$ in $\Omega_{1,R}$ and $\kappa_P(x):=\kappa_2$ in $\Omega_{2,R}$, with $\Omega_{1,R}$ and $\Omega_{2,R}$ being the upper and lower half-space separated by $\Gamma_R$, respectively. We refer to Theorem 2.1 and Theorem 2.2 in \cite{YLZ22} for the well-posedness of the background Green's function $G_P(x,x_s)$ for $x_s\in\R^2\se\G_R$.

Define 
\begin{eqnarray*}\label{d2}
	V_P(x,x_s):=u(x,x_s)-G_P(x,x_s),
\end{eqnarray*}
which, from (\ref{a7}) and (\ref{d1}), satisfies 
\begin{eqnarray}\label{d3}
	\left\{\begin{array}{lll}
		\Delta V_P(x,x_s)+\kappa^2(x)V_P(x,x_s)=g(x,x_s) &\textrm{in}\;\; \R^2, \\[3mm]
		\lim\limits_{|x|\rightarrow \infty}|x|^{\frac{1}{2}}\left(\partial_{|x|} V_P(x,x_s)-{\rm i}\kappa(x)  V_P(x,x_s)\right)=0.
	\end{array}
	\right.
\end{eqnarray}
Here $g(x,x_s)$ is a function with compact support and given by $g(x,x_s):=\sigma G_P(x,x_s)$ in $D_R$ and $g(x,x_s)=0$ in $\R^2\setminus\overline{D_R}$, where $\sigma:=\kappa_1^2-\kappa_2^2$ and $D_R:=\Omega_2\cap\Omega_{1,R}$.

The main idea of the RTM algorithm is to break up the reconstruction of $\Gamma$ into two parts: the first part is to back-propagate the complex conjugated data $\overline{V_P(x_r,x_s)}$ and the second part is to calculate the imaginary part of the cross-correlation of $G_P(z,x_s)$ and the back-propagation field. We summarize it in the following algorithm.

{\bf Algorithm 2 (RTM for penetrable locally rough interface)}: Given the data $V_P(x_r,x_s)$ for $r=1,2,...,N_r$ and $s=1,2,...,N_s$.
\begin{itemize}
	\item Back-propagation: for $s=1,2,...,N_s$, solve the problem 
	\begin{eqnarray}\label{d4}
		\left\{\begin{aligned}
			&\Delta W_P(x,x_s)+\kappa_P^2(x)W_P(x,x_s)=\frac{|\Gamma_r|}{N_r}\sum_{r=1}^{N_r}\overline{V_P(x_r,x_s)}\delta_{x_r}(x) \;\;\textrm{in}\;\; \R^2, \\
			&\; \lim_{|x|\rightarrow \infty}|x|^{\frac{1}{2}}\left(\partial_{|x|} W_P(x,x_s)-{\rm i}\kappa_P(x)  W_P(x,x_s)\right)=0,
		\end{aligned}
		\right.
	\end{eqnarray}
	to obtain the solution $W_P$.
	\item Cross-correlation: for each sampling point $z\in S$, calculate the indicator function 
	\begin{eqnarray*}\label{d5}
		{\rm Ind}_P(z)=\kappa(x_r){\rm Im}\left\{\frac{|\Gamma_s|}{N_s}\sum_{s=1}^{N_s}\kappa(x_s)G_P(z,x_s)W_P(z,x_s)\right\}
	\end{eqnarray*}
	and then plot the mapping ${\rm Ind}_P(z)$ against $z$.
\end{itemize}

Combination of (\ref{d1}) and the linearity shows that the solution of (\ref{d4}) can be expressed by
\begin{eqnarray*}\label{d6}
	W_P(x,x_s)=-\frac{|\Gamma_r|}{N_r}\sum_{r=1}^{N_r}G_P(x,x_r)\overline{V_P(x_r,x_s)}.
\end{eqnarray*}
Hence, we obtain
\begin{eqnarray*}\label{d7}
	{\rm Ind}_P(z)=-{\rm Im}\left\{\frac{|\Gamma_s|}{N_s}\frac{|\Gamma_r|}{N_r}\sum_{s=1}^{N_s}\sum_{r=1}^{N_r}\kappa(x_r)\kappa(x_s)G_P(z,x_s)G_P(z,x_r)\overline{V_P(x_r,x_s)}\right\}\quad z\in S,
\end{eqnarray*}
which is a discrete formula of the following continuous function 
\begin{eqnarray*}\label{d8}
	\widetilde{{\rm Ind}}_P(z)=-{\rm Im}\int_{\Gamma_r}\int_{\Gamma_s}\kappa(x_r)\kappa(x_s)G_P(z,x_s)G_P(z,x_r)\overline{V_P(x_r,x_s)}{\rm d}s(x_s){\rm d}s(x_r),\quad z\in S.
\end{eqnarray*}

In the remaining part of this section, we restrict us to show that the function $\widetilde{{\rm Ind}}_P(z)$ will have contrast at the rough interface $\Gamma$ and decay away from $\Gamma$. To this end, we first introduce the following modified Helmholtz-Kirchhoff identity. It can be shown by a direct application of the Green's theorem along with the continuity of $G_P$ and its normal derivative across $\Gamma_R$, which is similar to the proof of Lemma \ref{lem1} and we omit it here.
\begin{lemma}\label{lem7}
	Let $G_P$ be the background Green's function defined by (\ref{d1}). Then for  any $x,z\in B_{R_s}\setminus\Gamma_R$, we have 
	\begin{eqnarray*}\label{d9}
		\int_{\Gamma_p}\left(\overline{G_P(\xi,x)}\frac{\partial G_P(\xi,z)}{\partial \nu(\xi)}-\frac{\partial \overline{G_P(\xi,x)}}{\partial\nu(\xi)}G_P(\xi,z)\right){\rm d}s(\xi)=2{\rm i}{\rm Im}G_P(x,z),
	\end{eqnarray*}
where $\Gamma_p$ denotes $\Gamma_s$ or $\Gamma_r$.
\end{lemma}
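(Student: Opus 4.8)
The plan is to mimic the proof of Lemma~\ref{lem1}, the only genuinely new feature being that the interface $\Gamma_R$ now passes through the ball $B_{R_p}$, so that $G_P(\cdot,x)$ and $G_P(\cdot,z)$ are only piecewise smooth there. First I would fix $x,z\in B_{R_s}\setminus\Gamma_R$ with $x\neq z$ (the coincident case then follows by continuity, since ${\rm Im}\,G_P$ extends continuously across the diagonal), and choose $\varepsilon>0$ so small that $\overline{B_\varepsilon(x)}$ and $\overline{B_\varepsilon(z)}$ are disjoint, lie in $B_{R_s}$, and each lies entirely on one side of $\Gamma_R$. Then I would apply Green's second identity to $\overline{G_P(\cdot,x)}$ and $G_P(\cdot,z)$ over the region $\Omega_\varepsilon:=(B_{R_s}\setminus\Gamma_R)\setminus(\overline{B_\varepsilon(x)}\cup\overline{B_\varepsilon(z)})$, working on each of its (at most two) connected components separately. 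Because $\kappa_P$ is real and piecewise constant and the point sources have been excised, both functions satisfy $\Delta u+\kappa_P^2u=0$ throughout $\Omega_\varepsilon$, so the volume integrand $\overline{G_P(\xi,x)}\Delta G_P(\xi,z)-\Delta\overline{G_P(\xi,x)}\,G_P(\xi,z)$ vanishes identically; summing over components, one is left with boundary integrals over $\Gamma_p$, over the two faces of $\Gamma_R\cap B_{R_p}$, and over $\partial B_\varepsilon(x)\cup\partial B_\varepsilon(z)$, which I write as $I_1+I_2+I_3+I_4=0$.

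Next I would show $I_2=0$: on $\Gamma_R\cap B_{R_p}$ the one-sided limits of $G_P(\cdot,x)$, $G_P(\cdot,z)$ and of their normal derivatives coincide, since $G_P$ and $\partial_\nu G_P$ are continuous across $\Gamma_R$ by construction of the background Green's function (cf.\ \cite{YLZ22}), while the outward unit normals of the two adjoining components of $\Omega_\varepsilon$ are opposite; hence the two contributions cancel. For $I_3$ and $I_4$ I would split $G_P(\cdot,y)=\Phi_{\kappa_P(y)}(\cdot,y)+\widetilde G_P(\cdot,y)$ for $y\in\{x,z\}$, where $\Phi_{\kappa_P(y)}$ is the free-space fundamental solution with wavenumber $\kappa_P(y)$ and $\widetilde G_P(\cdot,y)$ is smooth near $y$; the smooth remainders give no contribution in the limit, while the logarithmic singularity of the fundamental solution produces, exactly as in the proof of Lemma~\ref{lem1}, $\lim_{\varepsilon\to0}I_3=-G_P(x,z)$ and $\lim_{\varepsilon\to0}I_4=\overline{G_P(z,x)}$.

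Letting $\varepsilon\to0$ then yields $I_1=G_P(x,z)-\overline{G_P(z,x)}$, and an appeal to the reciprocity $G_P(x,z)=G_P(z,x)$ for the two-layered background Green's function (a consequence of its symmetry, cf.\ \cite{YLZ22}, or provable by a Green's-identity argument of the same type) converts this into $I_1=G_P(x,z)-\overline{G_P(x,z)}=2{\rm i}\,{\rm Im}\,G_P(x,z)$, which is the asserted identity for $\Gamma_p=\Gamma_s$; the case $\Gamma_p=\Gamma_r$ is obtained verbatim after replacing $B_{R_s}$ by $B_{R_r}$, which is allowed since $R_s\le R_r$. The only real obstacle is the interface crossing the ball: one has to verify that the piecewise character of $G_P$ introduces no spurious boundary term, and this is exactly what the transmission conditions $[\,G_P\,]=[\,\partial_\nu G_P\,]=0$ on $\Gamma_R$ guarantee; once this is in hand the argument is a routine adaptation of Lemma~\ref{lem1}.
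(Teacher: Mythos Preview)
Your proposal is correct and follows essentially the same approach the paper has in mind: the paper omits the proof entirely, stating only that it follows from Green's theorem together with the continuity of $G_P$ and $\partial_\nu G_P$ across $\Gamma_R$, in analogy with Lemma~\ref{lem1}. Your write-up supplies precisely these details---splitting $B_{R_p}$ along $\Gamma_R$, using the transmission conditions to cancel the interface contributions, and handling the local singularities and reciprocity exactly as in the impenetrable case---so there is nothing to correct or add.
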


With the above modified Helmholtz-Kirchhoff identity and the Sommerfeld radiation condition, it is easy to obtain the following Lemma which plays an important role in the analysis of $\widetilde{{\rm Ind}}_P(z)$.
\begin{lemma}\label{lem8}
	For any $x,z\in S$, we have
	\begin{eqnarray}\label{d10}
		&&\int_{\Gamma_s}\kappa(\xi)\overline{G_P(x,\xi)}G_P(\xi,z){\rm d}s(\xi)={\rm Im}G_P(x,z)+\zeta_{P,s}(x,z)\\\label{d11}
	       &&\int_{\Gamma_r}\kappa(\xi)\overline{G_P(x,\xi)}G_P(\xi,z){\rm d}s(\xi)={\rm Im}G_P(x,z)+\zeta_{P,r}(x,z)
	\end{eqnarray}
	where $|\zeta_{P,s}(x,z)|+|\nabla_x\zeta_{P,s}(x,z)|\leq CR_s^{-\frac{1}{4}}$ and $|\zeta_{P,r}(x,z)|+|\nabla_x\zeta_{P,r}(x,z)|\leq CR_r^{-\frac{1}{4}}$ uniformly for any $x,z\in S$.
\end{lemma}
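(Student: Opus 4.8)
The plan is to mimic the proof of Lemma \ref{lem2}, now working with the background Green's function $G_P$ in the two-layered medium and with the refined Helmholtz--Kirchhoff identity of Lemma \ref{lem7}. I would only treat \eqref{d10}, since \eqref{d11} is identical with $\Gamma_s,R_s$ replaced by $\Gamma_r,R_r$. Starting from Lemma \ref{lem7} with $\Gamma_p=\Gamma_s$ and any $x,z\in S$ (so that $x,z\in B_{R_s}\setminus\Gamma_R$ because $S$ lies above $\Gamma_R$ inside $B_R\subset B_{R_s}$), I would split the integrand by inserting and subtracting ${\rm i}\kappa(\xi)\overline{G_P(\xi,x)}G_P(\xi,z)$, exactly as in the Dirichlet computation: write
\begin{eqnarray*}
2{\rm i}{\rm Im}G_P(x,z)&=&\int_{\Gamma_s}\overline{G_P(\xi,x)}\left[\frac{\partial G_P(\xi,z)}{\partial\nu(\xi)}-{\rm i}\kappa(\xi)G_P(\xi,z)\right]{\rm d}s(\xi)\\
&&-\int_{\Gamma_s}G_P(\xi,z)\left[\frac{\overline{\partial G_P(\xi,x)}}{\partial\nu(\xi)}+{\rm i}\kappa(\xi)\overline{G_P(\xi,x)}\right]{\rm d}s(\xi)\\
&&+2{\rm i}\int_{\Gamma_s}\kappa(\xi)\overline{G_P(\xi,x)}G_P(\xi,z){\rm d}s(\xi),
\end{eqnarray*}
and then use the reciprocity $G_P(\xi,x)=G_P(x,\xi)$ (valid here because both arguments avoid $\Gamma_R$; this is the layered-medium analogue of Lemma 3.1 of \cite{DLLY17}, see \cite{YLZ22}) to identify the last integral with the left side of \eqref{d10}. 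This defines $\zeta_{P,s}(x,z)$ as $\tfrac{{\rm i}}{2}$ times the sum of the first two boundary integrals.

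Next I would estimate $\zeta_{P,s}$ and its gradient in $x$. The key input is the far-field asymptotics of the background Green's function: for fixed $y$ in a compact set and $\xi\in\Gamma_s$ with $|\xi|=R_s\to\infty$, one has $G_P(\xi,y)=O(|\xi|^{-1/2})$ together with the Sommerfeld-type decay $\partial_{\nu(\xi)}G_P(\xi,y)-{\rm i}\kappa(\xi)G_P(\xi,y)=o(|\xi|^{-1/2})$, and similarly for the $x$-derivatives $\partial_{x_j}G_P(\xi,x)=O(|\xi|^{-1/2})$, $\partial_{x_j}[\partial_{\nu(\xi)}G_P(\xi,x)-{\rm i}\kappa(\xi)G_P(\xi,x)]=o(|\xi|^{-1/2})$, uniformly for $x,z\in S$; these follow from the representation of $G_P$ in \cite{YLZ22}. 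Multiplying a $O(|\xi|^{-1/2})$ factor against an $o(|\xi|^{-1/2})$ factor and integrating over $\Gamma_s$ (length $\sim R_s$) gives a bound $o(1)$; to get the explicit rate $R_s^{-1/4}$ one uses the quantitative decay of the radiating part together with the fact that the reflected/transmitted pieces of $G_P$ decay only algebraically near the interface direction $\hat\xi\to\pm e_1$, which is exactly the source of the weaker exponent $1/4$ compared with the exponent $1$ obtained in Lemma \ref{lem2} for the impenetrable case. I would therefore split $\Gamma_s$ into the part where $\hat\xi$ is within a small angle of the horizontal directions and its complement, bound the first part by its (small) measure times $\|G_P\|_{L^\infty(\Gamma_s)}^2\cdot\|\nabla\cdot\|$, optimize the angular cutoff, and bound the second part by the genuine $|\xi|^{-3/2}$ decay available away from grazing directions; balancing the two contributions yields $CR_s^{-1/4}$. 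The same argument applied with $\nabla_x$ differentiation, using the derivative asymptotics above, gives $|\nabla_x\zeta_{P,s}(x,z)|\le CR_s^{-1/4}$, and adding the two estimates proves the claimed bound.

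The main obstacle is precisely this last point: unlike the half-space Green's functions with a single reflected image used implicitly in Lemma \ref{lem1}--\ref{lem2}, the two-layered Green's function $G_P$ has a nontrivial angular profile in its far field (lateral/head waves along the interface), so one cannot simply quote the clean $O(|\xi|^{-3/2})$ Sommerfeld remainder uniformly on all of $\Gamma_s$; the honest decay degenerates near $\hat\xi=\pm e_1$, and handling that degeneracy is what produces the fractional rate $R_s^{-1/4}$ rather than $R_s^{-1}$. Once the requisite asymptotics of $G_P$ and $\nabla G_P$ from \cite{YLZ22} are in hand, everything else is a routine repetition of the proof of Lemma \ref{lem2}, so I would state the needed asymptotic lemma explicitly (or cite it precisely) and then carry out the angular-splitting estimate as above; the passage $\Gamma_p=\Gamma_s\leadsto\Gamma_p=\Gamma_r$ for \eqref{d11} requires no change beyond relabeling.
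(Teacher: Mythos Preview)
Your algebraic derivation of $\zeta_{P,s}$ from the Helmholtz--Kirchhoff identity of Lemma \ref{lem7}, via inserting $\pm{\rm i}\kappa(\xi)\overline{G_P(\xi,x)}G_P(\xi,z)$ and invoking reciprocity, matches the paper exactly. The divergence is in the decay estimate. You only claim the qualitative $o(|\xi|^{-1/2})$ Sommerfeld remainder and then propose to recover the rate $R_s^{-1/4}$ by an angular splitting of $\Gamma_s$ into near-grazing and non-grazing arcs, optimizing the cutoff. The paper instead invokes a \emph{uniform quantitative} bound directly: by the far-field asymptotics of the two-layered Green's function (Theorem~1 and Theorems~9--11 in \cite{LYZZ22}), one has
\[
G_P(\xi,y)=O(|\xi|^{-1/2}),\qquad \frac{\partial G_P(\xi,y)}{\partial\nu(\xi)}-{\rm i}\kappa(\xi)G_P(\xi,y)=O(|\xi|^{-3/4}),
\]
uniformly on all of $\Gamma_s$ (and likewise for $\partial_{x_j}$), so the integrand in $\zeta_{P,s}$ is $O(|\xi|^{-5/4})$ and integrating over $|\Gamma_s|\sim R_s$ gives $CR_s^{-1/4}$ in one line. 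Your intuition about why the exponent degrades from $1$ to $1/4$ (head-wave/grazing behavior) is correct, but that degradation is already absorbed into the cited $O(|\xi|^{-3/4})$ estimate, so the angular splitting is unnecessary---and as written, your balancing step does not actually exhibit how $R_s^{-1/4}$ falls out. If you want a self-contained argument, you would need to make the near-grazing decay precise enough to carry out the optimization; otherwise, citing the uniform $O(|\xi|^{-3/4})$ remainder from \cite{LYZZ22} is both shorter and what the paper does.
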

\begin{proof}
	For any $x,z\in S$, it follows from Lemma \ref{lem7} that 
	\begin{eqnarray*}
		2{\rm i}{\rm Im}G_P(x,z)&=&\int_{\Gamma_s}\left(\overline{G_P(\xi,x)}\frac{\partial G_P(\xi,z)}{\partial \nu(\xi)}-\frac{\partial \overline{G_P(\xi,x)}}{\partial\nu(\xi)}G_P(\xi,z)\right){\rm d}s(\xi)\\
		&=&\int_{\Gamma_s}\Bigg\{\overline{G_P(\xi,x)}\left[\frac{\partial G_P(\xi,z)}{\partial \nu(\xi)}-{\rm i}\kappa(\xi) G_P(\xi,z)\right]\\
		&&\qquad\;\;\;-G_P(\xi,z)\left[\frac{\overline{\partial G_P(\xi,x)}}{\partial \nu(\xi)}+{\rm i}\kappa(\xi)\overline{G_P(\xi,x)}\right]\Bigg\}{\rm d}s(\xi)\\
		&&\qquad\;\;\;+2{\rm i}\int_{\Gamma_s}\kappa(\xi)\overline{G_P(\xi,x)}G_P(\xi,z){\rm d}s(\xi).
	\end{eqnarray*}
	Thus, a direct application of the reciprocity $G_P(\xi,x)=G_P(x,\xi)$ for $\xi\in \partial B_r$ and $x\in S$ yields 
	\begin{eqnarray*}
		\int_{\Gamma_s}\kappa(\xi)\overline{G_P(x,\xi)}G_P(\xi,z){\rm d}s(\xi)={\rm Im}G_P(x,z)+\zeta_{P,s}(x,z)\quad{\rm for}\;\;{\forall x,z\in S},
	\end{eqnarray*}
	with 
	\begin{eqnarray*}
		\zeta_{P,s}(x,z)&=&\frac{\rm i}{2}\int_{\Gamma_s}\Bigg\{\overline{G_P(\xi,x)}\left[\frac{\partial G_P(\xi,z)}{\partial \nu(\xi)}-{\rm i}\kappa(\xi) G_P(\xi,z)\right]\\
		&&\qquad\quad-G_P(\xi,z)\left[\frac{\overline{\partial G_P(\xi,x)}}{\partial \nu(\xi)}+{\rm i}\kappa(\xi)\overline{G_P(\xi,x)}\right]\Bigg\}{\rm d}s(\xi).
	\end{eqnarray*}
Thus, the inequality (\ref{d10}) holds. By a similar argument with Theorem 1 and Theorems 9-11 in \cite{LYZZ22}, we have 
	\begin{eqnarray*}
		G_P(\xi,y)=O(|\xi|^{-\frac{1}{2}}),\qquad\frac{\partial G_P(\xi,y)}{\partial \nu(\xi)}-{\rm i}\kappa(\xi) G_P(\xi,y)=O(|\xi|^{-\frac{3}{4}})
	\end{eqnarray*}
	for $y\in\{x,z\}$, $x,z\in S$, which implies that 
	\begin{eqnarray*}
		|\zeta_{P,s}(x,z)|\leq CR_s^{-\frac{1}{4}}
	\end{eqnarray*}
	uniformly for $x,z\in S$. Since 
	\begin{eqnarray*}
		\frac{\partial G_P(\xi,x)}{\partial x_j}=O(|\xi|^{-\frac{1}{2}}),\qquad\frac{\partial}{\partial x_j}\left[\frac{\partial G_P(\xi,x)}{\partial \nu(\xi)}-{\rm i}\kappa(\xi) G_P(\xi,x)\right]=O(|\xi|^{-\frac{3}{4}})
	\end{eqnarray*}
	for $j=1,2$ and $x\in S$, it follows that 
	\begin{eqnarray*}
		|\nabla_x\zeta_{P,s}(x,z)|\leq CR_s^{-\frac{1}{4}}
	\end{eqnarray*}
	uniformly for $x,z\in S$. Thus, we conclude that $|\zeta_{P,s}(x,z)|+|\nabla_x\zeta_{P,s}(x,z)|\leq CR_s^{-\frac{1}{4}}$ hold. A similar argument shows that (\ref{d11}) holds. The proof is complete. 
\end{proof}

To analyze the indicator function $\widetilde{{\rm Ind}}_P(z)$, we also need the following Green's formula which is shown by Theorem 2.1 in \cite{LYZ21}.
\begin{lemma}\label{lem9}
	Let $V_P(x,x_s)$ be the solution of Problem (\ref{d3}). Then we have
	\begin{eqnarray*}\label{d12}
		V_P(x,x_s)=-\sigma\int_{D_R}G_P(x,\xi)u(\xi,x_s){\rm d}\xi,\quad{\rm for}\;\;x\in\R^2.
	\end{eqnarray*}
\end{lemma}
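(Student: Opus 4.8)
The plan is to recast the equation satisfied by $V_P$ so that its differential part becomes $\Delta+\kappa_P^2(x)$ — the operator for which $G_P$ is a fundamental solution — and then to read the stated formula off Green's identity. This identity is, as noted, contained in Theorem~2.1 of \cite{LYZ21}; here is the route I would take.

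First I would locate where the two wavenumber fields disagree. Since the local perturbation of $\Gamma$ lies above $\Gamma_R$, the surface $\Gamma$ itself lies above $\Gamma_R$, so the region that is simultaneously below $\Gamma$ and above $\Gamma_R$ is exactly $D_R=\Omega_2\cap\Omega_{1,R}$; on $D_R$ one has $\kappa=\kappa_2$ and $\kappa_P=\kappa_1$, while everywhere else $\kappa=\kappa_P$. Hence $\kappa_P^2(x)-\kappa^2(x)=\sigma\chi_{D_R}(x)$ with $\sigma=\kappa_1^2-\kappa_2^2$ and $\chi_{D_R}$ the characteristic function of $D_R$. The total field $u(\cdot,x_s)$ of the penetrable problem (\ref{a7}) satisfies $\Delta u+\kappa^2(x)u=-\delta_{x_s}$ in $\R^2$ with the transmission conditions on $\Gamma$ built in, while $G_P(\cdot,x_s)$ satisfies $\Delta G_P+\kappa_P^2(x)G_P=-\delta_{x_s}$ by (\ref{d1}); subtracting and using $u=G_P+V_P$ gives $\Delta V_P+\kappa_P^2(x)V_P=(\kappa_P^2-\kappa^2)u=\sigma\chi_{D_R}\,u$ in $\R^2$, which is exactly (\ref{d3}) rewritten. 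A short check of the matching across $\Gamma$ and across $\Gamma_R$ shows that $V_P\in H^1_{\rm loc}(\R^2)$ with no single- or double-layer contributions on either curve, so Green's theorem applies without interface terms.

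Next I would produce the representation. Fixing $x\in\R^2$ (with $x\notin\Gamma_R$; the general case follows by continuity), choosing $\rho$ large and $\varepsilon>0$ small with $\overline{B_\varepsilon(x)}\subset B_\rho$, and applying Green's theorem to $V_P(\cdot,x_s)$ and $\xi\mapsto G_P(\xi,x)$ on $B_\rho\setminus\overline{B_\varepsilon(x)}$ — exactly as in the proof of Lemma~\ref{lem3} — the zeroth-order $\kappa_P^2$ terms cancel in the volume integral, leaving $\sigma\int_{(B_\rho\setminus\overline{B_\varepsilon(x)})\cap D_R}G_P(\xi,x)u(\xi,x_s)\,{\rm d}\xi$, which tends to $\sigma\int_{D_R}G_P(\xi,x)u(\xi,x_s)\,{\rm d}\xi$ as $\varepsilon\to0$, $\rho\to\infty$, since $D_R$ is bounded and $G_P(\cdot,x)$ is locally integrable. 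Splitting $G_P(\xi,x)=\Phi_{\kappa_P(x)}(\xi,x)+(\text{a term smooth near }x)$, the $\partial B_\varepsilon(x)$ contribution tends to $-V_P(x,x_s)$ as in the computation of $I_{32}$ leading to (\ref{b27}); and the $\partial B_\rho$ contribution vanishes after rewriting it as $\int_{\partial B_\rho}\big(V_P[\partial_\nu G_P-{\rm i}\kappa_P G_P]-G_P[\partial_\nu V_P-{\rm i}\kappa_P V_P]\big)\,{\rm d}s$ and using the Sommerfeld radiation conditions for $G_P$ and $V_P$ (which carry the same $\kappa_P(x)$ at infinity because $D_R$ is bounded) together with Cauchy--Schwarz and the sphere bounds $\int_{\partial B_\rho}|V_P|^2\,{\rm d}s=O(1)$ and $\int_{\partial B_\rho}|G_P|^2\,{\rm d}s=O(1)$. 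Equating the two sides and applying the reciprocity $G_P(\xi,x)=G_P(x,\xi)$ used in Lemma~\ref{lem8} yields $V_P(x,x_s)=-\sigma\int_{D_R}G_P(x,\xi)u(\xi,x_s)\,{\rm d}\xi$.

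The step I expect to be the main obstacle is the vanishing of the $\partial B_\rho$ term, which rests on those uniform-in-$\rho$ sphere bounds. For $V_P$ I would deduce $\int_{\partial B_\rho}|V_P|^2\,{\rm d}s=O(1)$ from a Rellich-type identity applied to $\Delta V_P+\kappa_P^2V_P=\sigma\chi_{D_R}u$ — whose right-hand side is compactly supported, so the resulting boundary term from $D_R$ stays finite — along the lines of (\ref{b21})--(\ref{b25}); for $G_P$ the bound follows from the far-field estimate $G_P(\xi,x)=O(|\xi|^{-1/2})$ of the two-layered Green's function already quoted in the proof of Lemma~\ref{lem8}. Everything else is bookkeeping once $\Delta V_P+\kappa_P^2V_P=\sigma\chi_{D_R}u$ is in hand; alternatively, one may simply invoke \cite[Theorem~2.1]{LYZ21}, which packages the well-posedness of (\ref{d3}) together with this representation.
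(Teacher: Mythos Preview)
Your proposal is correct. The paper does not give its own proof of this lemma but simply records it as ``shown by Theorem~2.1 in \cite{LYZ21}''; your argument --- rewriting $\Delta V_P+\kappa_P^2 V_P=\sigma\chi_{D_R}u$ and then applying Green's theorem against $G_P(\cdot,x)$ with the standard small-ball/large-sphere limits --- is exactly the route that reference packages, so there is nothing to compare.
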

Now we are in position to present the main result of this section.
\begin{theorem}\label{thm3}
	For any $z\in S$, let $\psi_P(\xi,z)$ be the solution of 
	\begin{eqnarray}\label{d13}
		\left\{\begin{aligned}
			&\Delta \psi_P(\xi,z)+\kappa^2(\xi)\psi_P(\xi,z)=-\sigma\chi_{D_R}(\xi){\rm Im}G_P(\xi,z) \qquad{\rm in}\;\; \R^2, \\
			&\; \lim_{|\xi|\rightarrow \infty}|\xi|^{\frac{1}{2}}\left(\partial_{|\xi|} \psi_P(\xi,z)-{\rm i}\kappa(\xi)  \psi_P(\xi,z)\right)=0,
		\end{aligned}
		\right.
	\end{eqnarray}
	where $\chi_{D_R}$ is the characterization function of the domain $D_R$ given by $\chi_{D_R}=1$ in $D_R$ and vanishes outside $D_R$, and $\psi_P^{\infty}(\hat{\xi},z)$ be the corresponding far field pattern. Then we have
	\begin{eqnarray*}\label{d14}
		\widetilde{{\rm Ind}}_P(z)=\kappa_1\int_{{\mathbb S}_+}|\psi_P^{\infty}(\hat{\xi},z)|^2{\rm d}s(\hat{\xi})+\kappa_2\int_{{\mathbb S}_-}|\psi_P^{\infty}(\hat{\xi},z)|^2{\rm d}s(\hat{\xi})+\eta_P(z)\quad{\forall z\in S}
	\end{eqnarray*}
	where $\|\eta_P(z)\|_{L^{\infty}(S)}\leq C(R_s^{-\frac{1}{4}}+R_r^{-\frac{1}{4}})$ with some constant $C$ depending on $R$.
\end{theorem}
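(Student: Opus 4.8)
The plan is to mirror the structure of the proof of Theorem \ref{thm1}, replacing the boundary-integral Green's formula of Lemma \ref{lem3} with the volume-integral Green's formula of Lemma \ref{lem9}. First I would write, for $z\in S$,
\begin{eqnarray*}
\widetilde{{\rm Ind}}_P(z)=-{\rm Im}\int_{\Gamma_s}\kappa(x_s)G_P(z,x_s)\widetilde{W}_P(z,x_s)\,{\rm d}s(x_s),\quad
\widetilde{W}_P(z,x_s):=\int_{\Gamma_r}\kappa(x_r)G_P(z,x_r)\overline{V_P(x_r,x_s)}\,{\rm d}s(x_r),
\end{eqnarray*}
and then substitute $V_P(\xi,x_s)=-\sigma\int_{D_R}G_P(\xi,\eta)u(\eta,x_s)\,{\rm d}\eta$ (Lemma \ref{lem9}), exchange the order of integration, and use Lemma \ref{lem8} (identity (\ref{d11})) to replace $\int_{\Gamma_r}\kappa(x_r)\overline{G_P(\xi,x_r)}G_P(z,x_r)\,{\rm d}s(x_r)$ by ${\rm Im}G_P(\xi,z)+\zeta_{P,r}(\xi,z)$. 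This turns $\widetilde{W}_P$ into a volume integral over $D_R$ of $\overline{u(\eta,x_s)}\bigl[{\rm Im}G_P(\eta,z)+\overline{\zeta_{P,r}(\eta,z)}\bigr]$ (up to conjugates), and a second application of Lemma \ref{lem8} (identity (\ref{d10})) to the $x_s$-integral produces a double volume integral over $D_R\times D_R$ whose kernel is ${\rm Im}G_P$ plus $O(R_s^{-1/4}+R_r^{-1/4})$ remainders.

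Next I would identify the leading term as a quadratic form in $\psi_P$. Define $\phi_P(\eta,z):=\sigma\int_{\Gamma_s}\kappa(x_s)G_P(z,x_s)\overline{u(\eta,x_s)}\,{\rm d}s(x_s)$ for $\eta\in D_R$; by linearity and Lemma \ref{lem8}, $\overline{\phi_P}$ solves (\ref{d3})-type equation with right-hand side $-\sigma\chi_{D_R}(\eta)\bigl[{\rm Im}G_P(\eta,z)+\zeta_{P,s}(\eta,z)\bigr]$, so $\overline{\phi_P}=\psi_P+\varphi_P$ where $\psi_P$ solves (\ref{d13}) and $\varphi_P$ solves the same equation with source $-\sigma\chi_{D_R}\zeta_{P,s}$. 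Dropping the $\zeta_{P,r}$ and $\varphi_P$ contributions into the remainder $\eta_P(z)$, the main term becomes $-{\rm Im}\int_{D_R}\psi_P(\eta,z)\,\overline{(-\sigma)^{-1}\bigl[\Delta\overline{\psi_P}+\kappa^2\overline{\psi_P}\bigr]}\,\cdots$ — more cleanly, $-\sigma^{-1}{\rm Im}\int_{\R^2}\overline{\psi_P(\eta,z)}\bigl(\Delta\psi_P+\kappa^2\psi_P\bigr)(\eta,z)\,{\rm d}\eta$ rearranged via $\Delta\psi_P+\kappa^2\psi_P=-\sigma\chi_{D_R}{\rm Im}G_P$. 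Applying Green's theorem on $B_\rho$ and letting $\rho\to\infty$, the $\int_{\partial B_\rho}$ term produces $\kappa_1\int_{{\mathbb S}_+}|\psi_P^\infty|^2 + \kappa_2\int_{{\mathbb S}_-}|\psi_P^\infty|^2$ via the radiation condition and the asymptotics of $\psi_P$ (with $\kappa=\kappa_1$ on ${\mathbb S}_+$ and $\kappa=\kappa_2$ on ${\mathbb S}_-$), exactly as (\ref{b40})--(\ref{b43}) but now the far-field sphere is the full ${\mathbb S}$; the volume term $\int_{\R^2}|\nabla\psi_P|^2$ is real and drops under ${\rm Im}$.

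Finally I would estimate $\eta_P(z)$. It collects (i) terms with a $\zeta_{P,r}$ factor, bounded by $CR_r^{-1/4}$ using (\ref{d11}) together with the bound $\|\psi_P\|_{L^2(D_R)}+\|{\rm Im}G_P(\cdot,z)\|_{L^2(D_R)}\le C$ (from the well-posedness of (\ref{d13}), cf. \cite{LYZ21}, and smoothness of ${\rm Im}G_P$ for $z\in S$ away from $\G_R$); (ii) terms with a $\varphi_P$ factor, bounded by $CR_s^{-1/4}$ since $\|\varphi_P\|_{L^2(D_R)}\lesssim\|\zeta_{P,s}\|_{L^2(D_R)}\lesssim R_s^{-1/4}$ by Lemma \ref{lem8} and well-posedness; so $\|\eta_P\|_{L^\infty(S)}\le C(R_s^{-1/4}+R_r^{-1/4})$, with $C$ depending on $R$. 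The main obstacle is bookkeeping the two nested applications of the Helmholtz--Kirchhoff identity across the \emph{volume} integral $D_R$ rather than a boundary curve — one must check that the kernel manipulations and order-of-integration exchanges are legitimate (which follows from continuity of $G_P$ and its normal derivative across $\G_R$, and the $L^2$ bounds above) and that the transmission conditions built into $G_P$ make the Green's-theorem boundary terms on $\G_R$ vanish, so that only the $\partial B_\rho^+\cup\partial B_\rho^-$ contributions survive and split into the two hemispherical far-field integrals with the correct wavenumbers. The sound-hard-type sign subtleties present in Theorem \ref{thm1} do not arise here because the transmission problem is self-adjoint in the relevant sense, so the leading quadratic form is manifestly nonnegative.
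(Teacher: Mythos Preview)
Your plan is essentially the paper's own proof: write $\widetilde{{\rm Ind}}_P$ with the inner receiver integral $\widetilde W_P$, substitute Lemma~\ref{lem9}, apply Lemma~\ref{lem8} twice, isolate a quadratic form in $\psi_P$, and convert it to the far-field energy via Green's theorem and the radiation condition. The remainder estimate also proceeds exactly as in the paper.

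There is one point where your sketch is too quick and, as written, not correct. You assert that ``by linearity and Lemma~\ref{lem8}, $\overline{\phi_P}$ solves a (\ref{d3})-type equation with right-hand side $-\sigma\chi_{D_R}[{\rm Im}G_P+\zeta_{P,s}]$, so $\overline{\phi_P}=\psi_P+\varphi_P$.'' This is false: $\overline{\phi_P}$ is a superposition of \emph{total} fields $u(\cdot,x_s)$, not of radiating solutions, and hence does not satisfy the Sommerfeld condition; linearity alone does not produce the claimed source term. The paper handles this step differently: it invokes the Lippmann--Schwinger equation $u(\xi,x_s)+\sigma\int_{D_R}G_P(\xi,y)u(y,x_s)\,{\rm d}y=G_P(\xi,x_s)$ to show that $\theta:=\overline{\phi_P}-[{\rm Im}G_P+\zeta_{P,s}]$ is the radiating solution of $\Delta\theta+\kappa^2\theta=\sigma\chi_{D_R}[{\rm Im}G_P+\zeta_{P,s}]$, whence the correct decomposition is
\[
\phi_P=\overline{\psi_P}+\overline{\varphi_P}+{\rm Im}G_P+\overline{\zeta_{P,s}}.
\]
Fortunately the extra pieces do no harm: the ${\rm Im}G_P\cdot{\rm Im}G_P$ term is real, so its imaginary part vanishes, and the $\zeta_{P,s}$ term is absorbed into $\eta_P$. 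So your conclusion survives, but you should replace the ``by linearity'' step with the Lippmann--Schwinger argument (or, equivalently, split $u=G_P+V_P$ and apply Lemma~\ref{lem8} to the $G_P$-part directly) and carry the additional two terms through to $\eta_P$.
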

\begin{proof}
	Note that 
	\begin{eqnarray*}\label{d15}
		\widetilde{{\rm Ind}}_P(z)&=&-{\rm Im}\int_{\Gamma_r}\int_{\Gamma_s}\kappa(x_s)\kappa(x_r)G_P(z,x_s)G_P(z,x_r)\overline{V_P(x_r,x_s)}{\rm d}s(x_s){\rm d}s(x_r)\\
		&=&-{\rm Im}\int_{\Gamma_s}\kappa(x_s)G_P(z,x_s)\widetilde{W}_P(z,x_s){\rm d}s(x_s)
	\end{eqnarray*}
	where 
	\begin{eqnarray*}\label{d16}
		\widetilde{W}_P(z,x_s):=\int_{\Gamma_r}\kappa(x_r)G_P(z,x_r)\overline{V_P(x_r,x_s)}{\rm d}s(x_r).
	\end{eqnarray*}
	With the help of Lemma \ref{lem9} and Lemma \ref{lem8}, we can rewrite $\widetilde{W}_P(z,x_s)$ as 
	\begin{eqnarray*}\label{d17}
		\widetilde{W}_P(z,x_s)&=&-\sigma\int_{D_R}\left[\int_{\Gamma_r}\kappa(x_r)G_P(z,x_r)\overline{G_P(x_r,\xi)}{\rm d}s(x_r)\right]\overline{u(\xi,x_s)}{\rm d}\xi\\
		&=&-\sigma\int_{D_R}\left[{\rm Im}G_P(\xi,z)+\zeta_{P,r}(\xi,z)\right]\overline{u(\xi,x_s)}{\rm d}\xi
	\end{eqnarray*}
	which leads to 
	\begin{eqnarray*}\label{d18}
		\widetilde{{\rm Ind}}_P(z)=\sigma{\rm Im}\int_{D_R}\left[{\rm Im}G_P(\xi,z)+\zeta_{P,r}(\xi,z)\right]\phi_P(\xi,z){\rm d}\xi,
	\end{eqnarray*}
	with
	\begin{eqnarray*}\label{d19}
		\phi_P(\xi,z):=\int_{\Gamma_s}\kappa(x_s)G_P(z,x_s)\overline{u(\xi,x_s)}{\rm d}s(x_s).
	\end{eqnarray*}
	Due to the following Lippmann-Schwinger integral equation  (cf. \cite[Theorem 2.1]{LYZ21})
	\begin{eqnarray*}\label{d20}
		u(\xi,x_s)+\sigma\int_{D_R}G_P(\xi,y)u(y,x_s){\rm d}y=G_P(\xi,x_s)\quad{\rm for}\;\;\xi\in\R^2,
	\end{eqnarray*}
	we arrive at
	\begin{eqnarray*}\nonumber
		\overline{\phi_P(\xi,z)}=\int_{\Gamma_s}\kappa(x_s)\overline{G_P(z,x_s)}G_P(\xi,x_s){\rm d}s(x_s)-\sigma\int_{D_R}G_P(\xi,y)\overline{\phi_P(y,z)}{\rm d}y\\\label{d21}
		={\rm Im}G_P(\xi,z)+\zeta_{P,s}(\xi,z)-\sigma\int_{D_R}G_P(\xi,y)\overline{\phi_P(y,z)}{\rm d}y.
	\end{eqnarray*}
	Let $\theta(\xi,z)=\overline{\phi_P(\xi,z)}-\left[{\rm Im}G_P(\xi,z)+\zeta_{P,s}(\xi,z)\right]$, then 
	\begin{eqnarray*}\label{d22}
		\theta(\xi,z)=-\sigma\int_{D_R}G_P(\xi,y)\left[\theta(y,z)+{\rm Im}G_P(y,z)+\zeta_{P,s}(y,z)\right]{\rm d}y.
	\end{eqnarray*}
	Hence, we conclude that $\theta(\xi,z)$ satisfies the Sommerfeld radiation condition and 
	\begin{eqnarray*}\label{d23}
		\left\{\begin{aligned}
			&\Delta \theta(\xi,z)+\kappa^2(\xi)\theta(\xi,z)=0\qquad &&{\rm in}\;\; \R^2\setminus\overline{D_R}, \\
			&\Delta \theta(\xi,z)+\kappa_1^2\theta(\xi,z)=\sigma\left[\theta(\xi,z)+{\rm Im}G_P(\xi,z)+\zeta_{P,s}(\xi,z)\right] &&{\rm in}\;\; D_R,
		\end{aligned}
		\right.
	\end{eqnarray*}
	which is equivalent to 
	\begin{eqnarray}\label{d24}
		\left\{\begin{aligned}
			&\Delta \theta(\xi,z)+\kappa^2(\xi)\theta(\xi,z)=\sigma\chi_{D_R}(\xi)\left[{\rm Im}G_P(\xi,z)+\zeta_{P,s}(\xi,z)\right] \;\;{\rm in}\;\; \R^2, \\
			&\; \lim_{|\xi|\rightarrow \infty}|\xi|^{\frac{1}{2}}\left(\partial_{|\xi|} \theta(\xi,z)-{\rm i}\kappa(\xi)  \theta(\xi,z)\right)=0,
		\end{aligned}
		\right.
	\end{eqnarray}
	Let $\psi_P(\xi,z)$ and $\varphi_P(\xi,z)$ solve the same scattering problem (\ref{d24}) expect that the right hand term are replaced by $\sigma\chi_{D_R}(\xi){\rm Im}G_P(\xi,z)$ and $\sigma\chi_{D_R}(\xi)\zeta_{P,s}(\xi,z)$. Then by the linearity we have 
	\begin{eqnarray*}\label{d25}
		\theta(\xi,z)=\psi_P(\xi,z)+\varphi_P(\xi,z)
	\end{eqnarray*}
	which yields 
	\begin{eqnarray*}\label{d26}
		\phi_P(\xi,z)=\overline{\psi_P(\xi,z)}+\overline{\varphi_P(\xi,z)}+{\rm Im}G_P(\xi,z)+\overline{\zeta_{P,s}(\xi,z)}.
	\end{eqnarray*}
	Hence,
	\begin{eqnarray*}
		\widetilde{{\rm Ind}}_P(z)&=&\sigma{\rm Im}\int_{D_R}\left[{\rm Im}G_P(\xi,z)+\zeta_{P,r}(\xi,z)\right]\\
		&&\times\left[\overline{\psi_P(\xi,z)}+\overline{\varphi_P(\xi,z)}+{\rm Im}G_P(\xi,z)+\overline{\zeta_{P,s}(\xi,z)}\right]{\rm d}\xi\\
		&=&{\rm Im}\int_{D_R}\left[\Delta\psi_P(\xi,z)+\kappa_2^2\psi_P(\xi,z)\right]\overline{\psi_P(\xi,z)}{\rm d}\xi+\eta_P(z)\\
		&=&{\rm Im}\int_{\partial D_R}\frac{\partial \psi_P(\xi,z)}{\partial\nu(\xi)}\overline{\psi_P(\xi,z)}{\rm d}s(\xi)+\eta_P(z)\\
		&=&\kappa_1\int_{{\mathbb S}_+}|\psi_P^{\infty}(\hat{\xi},z)|^2{\rm d}s(\hat{\xi})+\kappa_2\int_{{\mathbb S}_-}|\psi_P^{\infty}(\hat{\xi},z)|^2{\rm d}s(\hat{\xi})+\eta_P(z)
	\end{eqnarray*}
	where we use the Green's theorem and the Sommerfeld radiation condition in the last step, and $\eta_P(z)$ is defined by
	\begin{align}\nonumber
		\eta_P(z)=\sigma{\rm Im}\int_{D_R}&\bigg[\zeta_{P,r}(\xi,z)\overline{\psi_P(\xi,z)}+\zeta_{P,r}(\xi,z)\overline{\varphi_P(\xi,z)}\\\nonumber
		&+\zeta_{P,r}(\xi,z){\rm Im}G_P(\xi,z)+\zeta_{P,r}(\xi,z)\overline{\zeta_{P,s}(\xi,z)}\\\label{dd}
		&+{\rm Im}G_P(\xi,z)\overline{\varphi_P(\xi,z)}+{\rm Im}G_P(\xi,z)\overline{\zeta_{P,s}(\xi,z)}\bigg]{\rm d}\xi.
	\end{align}
	
	Now we are in position to  show $\|\eta_P(z)\|_{L^{\infty}(S)}\leq C(R_s^{-\frac{1}{4}}+R_r^{-\frac{1}{4}})$ with $C$ depending on $R$. Recall that $\psi_P(\xi,z)$ and $\varphi_P(\xi,z)$ solve Problem (\ref{d24}) expect that the right hand term are replaced by $\sigma\chi_{D_R}(\xi){\rm Im}G_P(\xi,z)$ and $\sigma\chi_{D_R}(\xi)\zeta_{P,s}(\xi,z)$, thus we can write $\psi_P(\xi,z)$ and $\varphi_P(\xi,z)$ in the following form 
	\begin{eqnarray}\label{d27}
		&&\psi_P(\xi,z)=-\sigma\int_{D_R}G_{\Gamma}(\xi,y){\rm Im}G_P(y,z){\rm d}y\qquad\;{\rm for}\;\xi\in\R^2,\\\label{d28}
		&&\varphi_P(\xi,z)=-\sigma\int_{D_R}G_{\Gamma}(\xi,y)\zeta_{P,s}(y,z){\rm d}y\qquad\qquad{\rm for}\;\xi\in\R^2.
	\end{eqnarray}
	Here $G_{\Gamma}(\xi,y)$ is the Green's function associated with the two-dimensional Helmholtz equation in a two-layered medium 
	separated by $\Gamma$, which satisfies 
	\be\label{d29}
	\left\{\begin{array}{lll}
		\Delta_{\xi}G_{\Gamma}(\xi,y)+\kappa^2(\xi)G_{\Gamma}(\xi,y)=-\delta_y(\xi) & \textrm{in}\;\;\R^2, \\[2mm]
		\lim\limits_{|\xi|\rightarrow \infty}|\xi|^{\frac{1}{2}}\left(\partial_{|\xi|} G_{\Gamma}(\xi,y)
		-{\rm i}\kappa(\xi)G_{\Gamma}(\xi,y)\right)=0 
	\end{array}
	\right.
	\en
	in the distributional sense and the Sommerfeld radiation condition uniformly for all directions 
	$\hat{\xi}\in\mathbb{S}$. We refer to Theorem 3.2 and Theorem 3.3 in \cite{YLZ22} for the well-posedness of 
	Problem (\ref{d29}). It follows from (\ref{d27}) and (\ref{d28}) that
	\begin{eqnarray}\label{d30}
		\|\psi_P(\cdot,z)\|_{H^2(D_R)}\lesssim \|{\rm Im}G_P(\cdot,z)\|_{L^2(D_R)}\leq C\\\label{d31}
		\|\varphi_P(\cdot,z)\|_{H^2(D_R)}\lesssim \|\zeta_{P,s}(\cdot,z)\|_{L^2(D_R)}\leq CR_s^{-\frac{1}{4}}
	\end{eqnarray}
	where we use (\ref{d11}) and $C$ depends on $R$. A direct application of the smoothness of ${\rm Im}G_P(\cdot,z)$, (\ref{d11}), (\ref{dd}), (\ref{d30}), and (\ref{d31}), we can obtain 
	\begin{eqnarray*}
		\|\eta_P(z)\|_{L^{\infty}(S)}\leq C(R_s^{-\frac{1}{4}}+R_r^{-\frac{1}{4}})
	\end{eqnarray*}
	with $C$ depending on $R$. The proof is completed.
\end{proof}

\subsection{The far-filed reconstruction}
In this subsection, we present the RTM method based on far-field data to reconstruct the penetrable, locally rough surface. It requires to develop a mixed reciprocity relation. Throughout this subsection, for simplicity, we restrict ourselves to the case $\kappa_1>\kappa_2$. The case $\kappa_1<\kappa_2$ can be dealt in a similar manner. For the case $\kappa_1>\kappa_2$, let $n:=\kappa_2/\kappa_1$ and $\theta_c:=\arccos(n)\in (0,\pi)$ be the critical incident angle and $d:=(\cos(\theta), \sin(\theta))^{\rm \top}$ be the incident direction with $\theta$ being the incident angle. Denote by $d^r:=(\cos(\theta), -\sin(\theta))^{\rm \top}$ the reflected direction and denote by $d^t$ the transmitted direction which is defined by
\begin{equation*}
d^t=\left\{\begin{array}{l}
              (\cos(\varphi),\sin(\varphi))^{\rm\top}\qquad\qquad\qquad \textrm{for}\;\;\; \theta\in (0,\pi)\cup(\pi+\theta_c,2\pi-\theta_c), \\[1mm]
              n^{-1}(\cos(\theta), {\rm i}p)^{\rm\top}\qquad\qquad\qquad\; \textrm{for}\;\;\; \theta\in(\pi,\pi+\theta_c)\cup(2\pi-\theta_c, 2\pi), 
                    \end{array}
\right.
\end{equation*}
where $\varphi:=2\pi-\arccos(\kappa_1\cos(\theta)/\kappa_2)$ for $\theta\in (\pi+\theta_c,2\pi-\theta_c)$, $\varphi:=\arccos(\kappa_2\cos(\theta)/\kappa_1)$ for $\theta\in (0, \pi)$, and $p:=\sqrt{\cos^2(\theta)-n^2}$. 
 Let $w_0(x,d)$ be the total field of the scattering of plane waves $w^i(x,d)$ from the infinite plane $\Gamma_0$, it follows from the Fresnel formula and \cite{LYZZ22} that the field $w_0(x,d)$ is given by
\begin{equation}\label{d31}
w_{0}(x,d)=\left\{\begin{array}{l}
              e^{{\rm i}\kappa_1x\cdot d}+R(\kappa_1,\kappa_2,\theta)e^{{\rm i}\kappa_1 x\cdot d^r} \qquad\qquad\; \textrm{for}\;\;\; x\in{\mathbb R}^2_+, d\in{\mathbb S}_-, \\[1mm]
              T(\kappa_1,\kappa_2,\theta)e^{{\rm i}\kappa_2x\cdot d^t} \qquad\qquad\;\qquad\qquad \textrm{for}\;\;\; x\in{\mathbb R}^2_-, d\in{\mathbb S}_-, \\[1mm]
              T(\kappa_2,\kappa_1,\theta)e^{{\rm i}\kappa_1x\cdot d^t} \qquad\qquad\;\qquad\qquad \textrm{for}\;\;\; x\in{\mathbb R}^2_+, d\in{\mathbb S}_+, \\[1mm]
e^{{\rm i}\kappa_2x\cdot d}+R(\kappa_2,\kappa_1,\theta)e^{{\rm i}\kappa_2 x\cdot d^r} \qquad\qquad\; \textrm{for}\;\;\; x\in{\mathbb R}^2_-, d\in{\mathbb S}_+, 
            \end{array}
\right.
\end{equation}
which satisfies $w_0(x,d)|_+-w_0(x,d)|_-=\partial_{\nu}w_0(x,d)|_+-\partial_{\nu}w_0(x,d)|_-=0$ on $\Gamma_0$. Here, the coefficients $R$ and $T$ are defined by
\begin{equation*}\label{d32}
R(\lambda,\mu,\theta):=\left\{\begin{aligned}
              \frac{\lambda\sin(\theta)-\mu\sin(\varphi)}{\lambda\sin(\theta)+\mu\sin(\varphi)}\qquad\qquad\qquad\;\;&\textrm{for}\;\; \theta\in (0,\pi)\cup(\pi+\theta_c,2\pi-\theta_c), \\[2mm]
              \frac{{\rm i}\sin(\theta)+p}{{\rm i}\sin(\theta)-p}\qquad\qquad\qquad\qquad\;\;\;\;\;\;\; &\textrm{for}\;\; \theta\in(\pi,\pi+\theta_c)\cup(2\pi-\theta_c, 2\pi), 
                    \end{aligned}
\right.
\end{equation*}
and $T(\lambda,\mu,\theta):=R(\lambda,\mu,\theta)+1$.
Then the scattering of $w_0(x,d)$ by the locally rough surface $\Gamma$ can be modelled by 
\begin{equation*}\label{d33}
	\left\{\begin{aligned}
		&\Delta w^s(x,d)+\kappa^2w^s(x,d)=g(x,d) \qquad\qquad\qquad\;\textrm{in}\;\; {\mathbb R}^2, \\
		&\lim_{|x|\rightarrow \infty}|x|^{\frac{1}{2}}\left(\partial_{|x|} w^s(x,d)-{\rm i}\kappa w^s(x,d)\right)=0.\quad
	\end{aligned}
	\right.
\end{equation*}
Where $g(x,d):=\sigma(\chi_1-\chi_2)w_0(x,d)$ with $\chi_j(j=1,2)$ being the characterization function of the domain $D_j$ given by $\chi_j=1$ in $D_j$ and vanishes outside $D_j$, the domain $D_j$ is defined by $D_1:={\mathbb R}^2_+\cap\Omega_2$ and $D_2:={\mathbb R}^2_-\cap\Omega_1$, $w^s$ denotes the scattered field and the Sommerfield radiation condition holds uniformly for all directions $\hat{x}\in{\mathbb S}$.

Let $v_0(x,x_s)$ be the background Green's function in a two-layered medium seperated by $\Gamma_0$, which solves 
\begin{equation*}\label{d34}
	\left\{\begin{aligned}
		&\Delta v_0(x,x_s)+\kappa_0^2v_0(x,x_s)=-\delta_{x_s}(x) \qquad\qquad\qquad\;\textrm{in}\;\; {\mathbb R}^2, \\
		&\lim_{|x|\rightarrow \infty}|x|^{\frac{1}{2}}\left(\partial_{|x|} v_0(x,x_s)-{\rm i}\kappa_0 v_0(x,x_s)\right)=0,\quad
	\end{aligned}
	\right.
\end{equation*}
in the distributional sense with the Sommerfeld radiation condition uniformly for all $\hat{x}\in{\mathbb S}$. Here, the wavenumber $\kappa_0$ is defined by $\kappa_0:=\kappa_1$ in ${\mathbb R}^2_+$ and $\kappa_0:=\kappa_2$ in ${\mathbb R}^2_-$. Denote by $v_0^{\infty}(\hat{x},x_s)$ the far-field of $v_0(x,x_s)$, observing from the formulation of $v_0^{\infty}(\hat{x},x_s)$ in Proposition 2.1 in \cite{AIL05} and Theorem1, Theorem 10 in \cite{LYZZ22}, it is easily seen that  
\begin{equation}\label{d35}
v_0^{\infty}(\hat{x},x_s)=\gamma(\hat{x})w_0(x_s,-\hat{x})
\end{equation}
where $\gamma(\hat{x})=\gamma_1$ for $\hat{x}\in{\mathbb S}_+$ and $\gamma(\hat{x})=\gamma_2:=\frac{e^{\frac{\pi}{4}{\rm i}}}{\sqrt{8\kappa_2\pi}}$ for $\hat{x}\in{\mathbb S}_-$. Define $v^s(x,x_s):=u(x,x_s)-v_0(x,x_s)$ where $u(x,x_s)$ is the total field of Problem (\ref{a7}), then we have $v(x,x_s)$ solves 
\begin{equation*}\label{d36}
	\left\{\begin{aligned}
		&\Delta v^s(x,x_s)+\kappa^2v^s(x,x_s)=h(x,x_s) \qquad\qquad\qquad\;\textrm{in}\;\; {\mathbb R}^2, \\
		&\lim_{|x|\rightarrow \infty}|x|^{\frac{1}{2}}\left(\partial_{|x|} v^s(x,x_s)-{\rm i}\kappa v^s(x,x_s)\right)=0,\quad
	\end{aligned}
	\right.
\end{equation*}
with $h(x,x_s)=\sigma(\chi_1-\chi_2)v_0(x,x_s)$, where the Sommerfeld radiation condition holds uniformly for all $\hat{x}\in{\mathbb S}$.

\begin{theorem}
For acoustic scattering of plane waves $w_0(\cdot,-\hat{x})$ and point sources $v_0(\cdot,x_s)$ from a penetrable, locally rough surface $\Gamma$ we have
\begin{equation*}\label{d37}
v^{\infty}(\hat{x},x_s)=\gamma(\hat{x})w^s(x_s,-\hat{x})
\end{equation*}
for all $x, x_s\in (\Omega_1\cup\Omega_2)\setminus(\overline{D_1}\cup\overline{D_2})$.
\end{theorem}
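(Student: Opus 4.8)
The plan is to mimic the proof of the impenetrable mixed reciprocity relation (the theorem preceding Theorem~3.4 in the excerpt), adapting the Green's identity argument to the two-layered setting where the transmission conditions replace the Dirichlet/Neumann conditions. First I would set $D_1:=\Omega_2\cap{\mathbb R}^2_+$ and $D_2:=\Omega_1\cap{\mathbb R}^2_-$ and apply Green's theorem to the pair $v_0(\cdot,x_s)$ and $w_0(\cdot,-\hat{x})$ on each of the four ``lens-shaped'' pieces into which $\Gamma$ and $\Gamma_0$ cut a large ball: the pieces of $\Omega_1\cap{\mathbb R}^2_+$, $\Omega_2\cap{\mathbb R}^2_+$, $\Omega_1\cap{\mathbb R}^2_-$, $\Omega_2\cap{\mathbb R}^2_-$. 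On each piece both functions satisfy the Helmholtz equation with the \emph{same} wavenumber (since the wavenumber is constant on each piece and equals $\kappa_0$ or $\kappa$ consistently there), so the volume integrals vanish. Summing the four boundary contributions, the integrals over $\Gamma_0$ cancel because $v_0$ and $w_0$ and their normal derivatives are continuous across $\Gamma_0$ (the jump conditions stated after \eqref{d31} and the analogous ones for $v_0$), and the integrals over the outer semicircles vanish in the limit by the Sommerfeld radiation condition and Cauchy--Schwarz together with the $O(|\xi|^{-1/2})$ decay estimates. What survives is the identity
\begin{eqnarray*}
\int_{\Gamma}\left[v_{0}(\xi,x_s)\frac{\partial w_{0}(\xi,-\hat{x})}{\partial\nu(\xi)}-w_{0}(\xi,-\hat{x})\frac{\partial v_{0}(\xi,x_s)}{\partial\nu(\xi)}\right]{\rm d}s(\xi)=0,
\end{eqnarray*}
where $\frac{\partial}{\partial\nu}$ now denotes the (two-sided) contributions from both sides of $\Gamma$; more precisely one gets a sum of a ``$+$ side'' and a ``$-$ side'' integral, each weighted by the appropriate $\sigma\chi_j$ term, which is exactly the statement that $v_0$ and $w_0$ are background fields for the \emph{same} two-layered medium separated by $\Gamma_0$.

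Next I would derive the analogous identity for the scattered fields. Using the Lippmann--Schwinger representation (Lemma~\ref{lem9}-type formula with the source term $h(x,x_s)=\sigma(\chi_1-\chi_2)v_0$), I would write $v^s(x,x_s)$ as a volume potential over $D_1\cup D_2$ against $G_0$ (the background Green's function of the $\Gamma_0$ medium), and similarly $w^s(x,d)$; taking the far-field asymptotics and invoking \eqref{d35}, $v_0^{\infty}(\hat{x},x_s)=\gamma(\hat{x})w_0(x_s,-\hat{x})$, I get the two mixed representations
\begin{eqnarray*}
v^{\infty}(\hat{x},x_s)&=&\gamma(\hat{x})\,\sigma\!\int_{D_1\cup D_2}\!(\chi_1-\chi_2)(\xi)\,v^s(\xi,x_s)\,w_0(\xi,-\hat{x})\,{\rm d}\xi\;+\;(\text{term with }v_0\text{ only}),\\
w^s(x_s,-\hat{x})&=&\sigma\!\int_{D_1\cup D_2}\!(\chi_1-\chi_2)(\xi)\,w^s(\xi,-\hat{x})\,v_0(\xi,x_s)\,{\rm d}\xi\;+\;(\text{term with }w_0\text{ only}),
\end{eqnarray*}
in the same spirit as \eqref{b60}--\eqref{b61}. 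Subtracting $\gamma(\hat{x})^{-1}v^{\infty}(\hat{x},x_s)-w^s(x_s,-\hat{x})$ and combining with the $\Gamma$-identity above and with the cross-identity for $v^s,w^s$ (which follows from Green's theorem, the radiation condition and Cauchy--Schwarz, analogous to \eqref{b56}), every term assembles into a single integral against the \emph{total} fields $v:=v_0+v^s$ and $w:=w_0+w^s$; because these total fields satisfy the transmission conditions on $\Gamma$ (so the relevant combination of traces and normal-derivative traces vanishes), the integral is zero. That gives $v^{\infty}(\hat{x},x_s)=\gamma(\hat{x})w^s(x_s,-\hat{x})$ for $\hat{x}\in{\mathbb S}$ and $x_s\in(\Omega_1\cup\Omega_2)\setminus(\overline{D_1}\cup\overline{D_2})$.

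The main obstacle I anticipate is bookkeeping the volume/interface contributions correctly in the two-layered geometry: unlike the impenetrable case there are two interfaces ($\Gamma$ and $\Gamma_0$) and the domains $D_1,D_2$ carry opposite signs of the contrast $\sigma$, so one must be careful that the Green-identity pieces are grouped by \emph{which half-space and which side of $\Gamma$}, and that the wavenumber used on each piece is genuinely constant there (this is why the restriction $x,x_s\notin\overline{D_1}\cup\overline{D_2}$ is needed, so that the point-source singularities sit in regions where $v_0$ and $u$ share the same local wavenumber). A secondary technical point is justifying the vanishing of the large-circle integrals: since $v^s$ and $w^s$ here are volume-potential solutions rather than boundary-scattered fields, I would first record (citing the mapping properties of $G_0$, e.g. \cite{LYZZ22}) the decay $v^s(\xi,x_s)=O(|\xi|^{-1/2})$, $\partial_{|\xi|}v^s-{\rm i}\kappa v^s=o(|\xi|^{-1/2})$, and likewise for $w^s$, after which the limit arguments are exactly as in \eqref{b20}--\eqref{b25}. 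Once the geometry is organized, the remainder is the same Green's-identity manipulation already carried out for the impenetrable case, so I would keep the write-up parallel to that proof and only spell out the steps where the two-layered structure genuinely changes the computation.
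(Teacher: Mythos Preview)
Your core instinct—use Lippmann--Schwinger volume representations of $v^s$ and $w^s$ over $D_1\cup D_2$, take the far field via \eqref{d35}, and match via a cross-identity—is exactly what the paper does. But the boundary-integral scaffolding you build around it (the ``$\Gamma$-identity'' for $v_0,w_0$ and the final assembly into an integral of total fields along $\Gamma$) is both unnecessary and, as written, flawed.

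Two concrete problems. First, in your step~1 you need the large-semicircle contributions for the pair $(v_0,w_0)$ to vanish; but $w_0$ is a plane wave plus reflected/transmitted plane waves and does \emph{not} satisfy the Sommerfeld radiation condition, so the Cauchy--Schwarz argument from \eqref{b20}--\eqref{b25} is unavailable here. Second, your closing step claims the $\Gamma$-integral of the total fields vanishes ``because these total fields satisfy the transmission conditions on $\Gamma$''. Transmission conditions only say the \emph{jump} of traces across $\Gamma$ is zero; they do not kill a one-sided boundary integral $\int_\Gamma\!\big[v\,\partial_\nu w-w\,\partial_\nu v\big]_+{\rm d}s$, which is what the impenetrable-case template actually uses. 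So the parallel with the Dirichlet/Neumann proof breaks at precisely the point where the boundary condition was doing the work.

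The paper bypasses both issues by never forming a $\Gamma$-boundary identity at all. It applies Green's theorem to $v^s(\cdot,x_s)$ against the layered Green's function $v_0(\cdot,x)$ on $B_\rho\cap\Omega_1$ and $B_\rho\cap\Omega_2$; the $\Gamma$-contributions cancel by continuity of $v^s,\partial_\nu v^s,v_0,\partial_\nu v_0$ across $\Gamma$, and what survives is the volume identity
\[
v^s(x,x_s)=-\sigma\Big[\int_{D_1}v_0(x,\xi)u(\xi,x_s)\,{\rm d}\xi-\int_{D_2}v_0(x,\xi)u(\xi,x_s)\,{\rm d}\xi\Big].
\]
Taking the far field via \eqref{d35} turns $v_0$ into $\gamma(\hat{x})w_0$, giving $v^\infty(\hat{x},x_s)$ as a $w_0\cdot u$ volume integral. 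An identical derivation yields $w^s(x_s,-\hat{x})$ as a $v_0\cdot w$ volume integral, and a single Green's-theorem cross-identity between the radiating fields $w^s$ and $v^s$ (no $w_0$ on large circles) converts the latter into the same $w_0\cdot u$ integral. Comparison then gives the reciprocity. If you drop the $\Gamma$-identity layer and write your volume-representation steps directly in this order, your argument becomes correct and coincides with the paper's.
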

\begin{proof}
We restrict ourselves to the proof of the case $x, x_s\in\Omega_1\cap{\mathbb R}^2_+$ and the proof can be easily extended to other cases. We choose a sufficient large $\rho>0$ and a sufficient small $\varepsilon>0$ such that $B_{\varepsilon}(x)\subset D^+_{\rho}:=B_{\rho}\cap\Omega_1$. Applying the Green's formula to $v^s(\xi,x_s)$ and $v_0(\xi,x)$ in the domain $D_{\rho}^+\setminus\overline{B_{\varepsilon}(x)}$ and in the domain $D_{\rho}^-:=B_{\rho}\cap\Omega_2$ gives that 
\begin{equation}\label{d38}
v^s(x,x_s)=-\sigma\left[\int_{D_1}v_0(x,\xi)u(\xi,x_s){\rm d}\xi-\int_{D_2}v_0(x,\xi)u(\xi,x_s){\rm d}\xi\right]
\end{equation}
where we use the continuity of $v_0(\xi,x)$, $\partial_{\nu}v_0(\xi,x)$, $v^s(\xi,x_s)$, $\partial_{\nu}v^s(\xi,x_s)$ across $\Gamma$, the reciprocity $v_0(\xi,x)=v_0(x,\xi)$, and the Sommerfeld radiation condition. It follows from (\ref{d35}) and (\ref{d38}) that the far-field  $v^{\infty}(\hat{x},x_s)$ is given by
\begin{align}\label{d39}
v^{\infty}(\hat{x},x_s)=-\gamma_1\sigma\left[\int_{D_1}w_0(\xi,-\hat{x})u(\xi,x_s){\rm d}\xi-\int_{D_2}w_0(\xi,-\hat{x})u(\xi,x_s){\rm d}\xi\right].
\end{align}
A similar argument with (\ref{d38}) implies that 
\begin{equation}\label{d40}
w^s(x_s,-\hat{x})=-\sigma\left[\int_{D_1}v_0(\xi,x_s)w(\xi,-\hat{x}){\rm d}\xi-\int_{D_2}v_0(\xi,x_s)w(\xi,-\hat{x}){\rm d}\xi\right].
\end{equation}
 Noting that $w^s$, $v$ satisfy the Sommerfeld radiation condition, and $w^s$, $\partial_{\nu}w^s$, $v$, $\partial_{\nu}v$ are continuous across $\Gamma$, using the Green's formula for $w^s$ and $v$ in the domain $D_{\rho}^+$ and in the domain $D_{\rho}^-$ yields that 
 \begin{eqnarray}\nonumber
 0=-\sigma\left[\int_{D_1}w^s(\xi,-\hat{x})v_0(\xi,x_s){\rm d}\xi-\int_{D_2}w^s(\xi,-\hat{x})v_0(\xi,x_s){\rm d}\xi\right]\\\label{d41}
+\sigma\left[\int_{D_1}w_0(\xi,-\hat{x})v^s(\xi,x_s){\rm d}\xi-\int_{D_2}w_0(\xi,-\hat{x})v^s(\xi,x_s){\rm d}\xi\right].
 \end{eqnarray}
 The difference between (\ref{d40}) and (\ref{d41}) yields that 
 \begin{equation*}\label{d42}
 w^s(x_s,-\hat{x})=-\sigma\left[\int_{D_1}w_0(\xi,-\hat{x})u(\xi,x_s){\rm d}\xi-\int_{D_2}w_0(\xi,-\hat{x})u(\xi,x_s){\rm d}\xi\right].
 \end{equation*}
 Compared with (\ref{d39}), we conclude that $v^{\infty}(\hat{x},x_s)=\gamma(\hat{x})w^s(x_s,-\hat{x})$ for $x,x_s\in \Omega_1\cap{\mathbb R}^2_+$. The proof is finished.
\end{proof}

With the above mixed reciprocity relation, we can establish the main result of this subsection in the following theorem. Its proof is similar to Theorem \ref{thm26} and we omit it here. 
\begin{theorem}
For the indicator function $\widetilde{{\rm Ind}}_P(z)$, we have the following limit identity
\begin{eqnarray}\nonumber
&&\lim_{R_s\to\infty}\lim_{R_r\to\infty}\widetilde{\rm Ind}_P(z)=-{\rm Im} \int_{{\mathbb S}}\int_{{\mathbb S}}\kappa(\hat{x}_r)\kappa(\hat{x}_s)|\gamma(\hat{x}_r)|^2\gamma(\hat{x}_s)w(z,-\hat{x}_r,\Gamma_R)w(z,-\hat{x}_s,\Gamma_R)\\\label{b63}
&&\qquad\times\left[\overline{w^{\infty}(\hat{x}_s,-\hat{x}_r,\Gamma)}-\overline{w^{\infty}(\hat{x}_s,-\hat{x}_r,\Gamma_R)}\right]{\rm d}s(\hat{x}_r){\rm d}s(\hat{x}_s):=\widehat{\rm Ind}_P(z).
\end{eqnarray}
\end{theorem}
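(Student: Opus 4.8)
The plan is to reproduce the argument of Theorem~\ref{thm26}, with the two-layered background Green's functions replacing the Dirichlet ones and with the direction-dependent factor $\gamma(\hat x)$ replacing the constant $\gamma_1$. Set $v^s(x,x_s,\Gamma):=u(x,x_s)-v_0(x,x_s)$ and $v^s(x,x_s,\Gamma_R):=G_P(x,x_s)-v_0(x,x_s)$, so that $v_0(x,x_s)+v^s(x,x_s,\Gamma_R)=G_P(x,x_s)$ is the layered Green's function separated by $\Gamma_R$ and, from the definition of $V_P$, $V_P(x_r,x_s)=v^s(x_r,x_s,\Gamma)-v^s(x_r,x_s,\Gamma_R)$.

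First I would record the far-field behaviour of $G_P$. Combining the reciprocity $G_P(z,x)=G_P(x,z)$, the identity $v_0^{\infty}(\hat x,z)=\gamma(\hat x)w_0(z,-\hat x)$ from (\ref{d35}), the mixed reciprocity relation just established applied to $\Gamma_R$, and $w(\cdot,-\hat x,\Gamma_R)=w_0(\cdot,-\hat x)+w^s(\cdot,-\hat x,\Gamma_R)$, one obtains, uniformly for $z\in S$,
\begin{equation*}
G_P(z,x)=\gamma(\hat x)\,\frac{e^{{\rm i}\kappa(\hat x)|x|}}{|x|^{1/2}}\Big[w(z,-\hat x,\Gamma_R)+O\big(|x|^{-1}\big)\Big],\qquad |x|\to\infty,
\end{equation*}
where $\hat x=x/|x|$, $\kappa(\hat x)=\kappa_1$ on ${\mathbb S}_+$ and $\kappa(\hat x)=\kappa_2$ on ${\mathbb S}_-$. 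Applying the mixed reciprocity relation to $\Gamma$ and to $\Gamma_R$ yields, in the same way,
\begin{equation*}
V_P(x_r,x_s)=\gamma(\hat x_r)\,\frac{e^{{\rm i}\kappa(\hat x_r)|x_r|}}{|x_r|^{1/2}}\Big[w^s(x_s,-\hat x_r,\Gamma)-w^s(x_s,-\hat x_r,\Gamma_R)+O\big(|x_r|^{-1}\big)\Big].
\end{equation*}

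Next I would pass to the limit one circle at a time, exactly as in (\ref{b64})--(\ref{b68}). Since $\Gamma_r$ is the circle of radius $R_r$ we have $ds(x_r)=R_r\,ds(\hat x_r)$ with $\hat x_r\in{\mathbb S}$; since $\kappa(\hat x_r)$ is real the exponentials in $G_P(z,x_r)\overline{V_P(x_r,x_s)}$ cancel, leaving a factor $|x_r|^{-1}=R_r^{-1}$ absorbed by $ds(x_r)$, so that $\widetilde W_P(z,x_s)=\int_{\Gamma_r}\kappa(x_r)G_P(z,x_r)\overline{V_P(x_r,x_s)}\,ds(x_r)$ obeys
\begin{equation*}
\lim_{R_r\to\infty}\widetilde W_P(z,x_s)=\int_{{\mathbb S}}\kappa(\hat x_r)\,|\gamma(\hat x_r)|^2\,w(z,-\hat x_r,\Gamma_R)\big[\overline{w^s(x_s,-\hat x_r,\Gamma)}-\overline{w^s(x_s,-\hat x_r,\Gamma_R)}\big]\,ds(\hat x_r).
\end{equation*}
Inserting this into $\widetilde{{\rm Ind}}_P(z)=-{\rm Im}\int_{\Gamma_s}\kappa(x_s)G_P(z,x_s)\widetilde W_P(z,x_s)\,ds(x_s)$, interchanging the order of integration, using the far-field expansion $w^s(x_s,-\hat x_r,\Gamma)-w^s(x_s,-\hat x_r,\Gamma_R)=|x_s|^{-1/2}e^{{\rm i}\kappa(\hat x_s)|x_s|}\big[w^{\infty}(\hat x_s,-\hat x_r,\Gamma)-w^{\infty}(\hat x_s,-\hat x_r,\Gamma_R)+O(|x_s|^{-1})\big]$ together with the expansion of $G_P(z,x_s)$ above, and letting $R_s\to\infty$ in the same fashion, one recovers exactly $\widehat{{\rm Ind}}_P(z)$ of (\ref{b63}).

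The main obstacle will be analytic rather than algebraic: one needs to ensure that the far-field expansions of $G_P$ and of $w^s(\cdot,-\hat x,\Gamma)$, $w^s(\cdot,-\hat x,\Gamma_R)$ hold with remainders that tend to $0$ as $|x|\to\infty$ \emph{uniformly} both in the fixed argument ($z\in S$, respectively $\hat x_r\in{\mathbb S}$) and in the observation direction $\hat x$, including directions tangential to $\Gamma_0$ where lateral (head) waves arise in a layered medium. These uniform expansions, the reciprocity of $G_P$, and the well-posedness of the layered-medium Green's functions are provided by \cite{YLZ22,LYZZ22,LYZ21}; granted them, the two successive passages to the limit and the interchange of integration orders are justified by dominated convergence, and the remaining steps are the bookkeeping indicated above.
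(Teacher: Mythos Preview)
Your proposal is correct and follows precisely the route the paper indicates: the authors write that the proof ``is similar to Theorem~\ref{thm26} and we omit it here,'' and your adaptation of (\ref{b64})--(\ref{b68}) with $G_P$, the direction-dependent $\gamma(\hat x)$ and $\kappa(\hat x)$, and the penetrable mixed reciprocity relation is exactly that similarity spelled out. Your flagging of the uniformity of the layered-medium far-field expansions (including the tangential directions) as the genuine analytic point is apt and goes slightly beyond what the paper makes explicit.
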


\section{Numerical experiments}
\setcounter{equation}{0}
In this section, we first give an analysis of the indicator function $\widetilde{\rm Ind}_{\alpha}(z)$ with $\alpha=D, N, P$ and then present several numerical experiments to demonstrate the effectiveness of the RTM method. 

According to Theorem \ref{thm1} and Theorem \ref{thm3}, it is easy to see that the behavior of the indicator function $\widetilde{\rm Ind}_{\alpha}(z)$ depends on $\psi_{\alpha}(\xi,z)$ when the source radius $R_s$ and measurement radius 
$R_r$ are large enough, where $\alpha=D, N, P$. Notice that the function $\psi_{\alpha}(\xi,z)$ satisfies Problem (\ref{b29}) and Problem (\ref{d13}) with boundary data $-{\rm Im}G_D(\xi, z)$, $-\partial_{\nu}{\rm Im}G_N(\xi,z)$, and $-\sigma\chi_{D_R}(\xi){\rm Im} G_P(\xi, z)$, respectively. Observe that 
\begin{eqnarray*}
	&&{\rm Im}G_D(\xi,z)=\frac{1}{4}J_0(\kappa_1 |\xi-z|)+{\rm Im}G_D^s(\xi,z)\\
	&&\partial_{x_j}{\rm Im}G_N(\xi,z)=\frac{\kappa}{4}J'_0(\kappa_1 |\xi-z|)\frac{\xi_j-z_j}{|\xi-z|}+\partial_{x_j}{\rm Im}G^s_N(\xi,z)\quad\;{\rm for}\;\;j=1,2\\
	&&{\rm Im}G_P(\xi,z)=\frac{1}{4}J_0(\kappa_1 |\xi-z|)+{\rm Im}G_P^s(\xi,z)\qquad\qquad\qquad\quad\;\;{\rm for}\;\;\xi, z\in\Omega_{1,R}
\end{eqnarray*}
where $G_{\alpha}^s(\xi,z)$ $(\alpha=D, N, P)$ denotes the corresponding scattered fields associated with $\Gamma_R$. It is shown numerically that $G_{\alpha}^s(\xi,z)$ can be sufficiently small for $\xi,z\in S$ when $R$ is large enough, see \cite{DLLY17, LYZ21} for details. It is shown in (a) and (d) of Figure \ref{f7} that $J_0(\kappa_1 |\xi-z|)$ achieves a maximum at $\xi=z$, which implies that $J'_0(\kappa_1 |\xi-z|)=0$ when $\xi=z$. Hence, we can obtain that the functions ${\rm Im}G_D(\xi,z)$ and ${\rm Im}G_P(\xi,z)$ will achieve a maximum at $\xi=z$ and the function $\partial_{x_j}{\rm Im}G_N(\xi,z)$ will 
achieve a minimum at $\xi=z$ for a sufficient large $R$. This property can be easily observed in Figure \ref{f7}. Based on this observation, we can expect that $\widetilde{\rm Ind}_D(z)$ and $\widetilde{\rm Ind}_P(z)$ will reach a peak on $\Gamma$, and $\widetilde{\rm Ind}_N(z)$ will hit a nadir on $\Gamma$.

\begin{figure}[htbp]
	\begin{minipage}[t]{0.28\linewidth}
		\centering
		\includegraphics[width=1.7in]{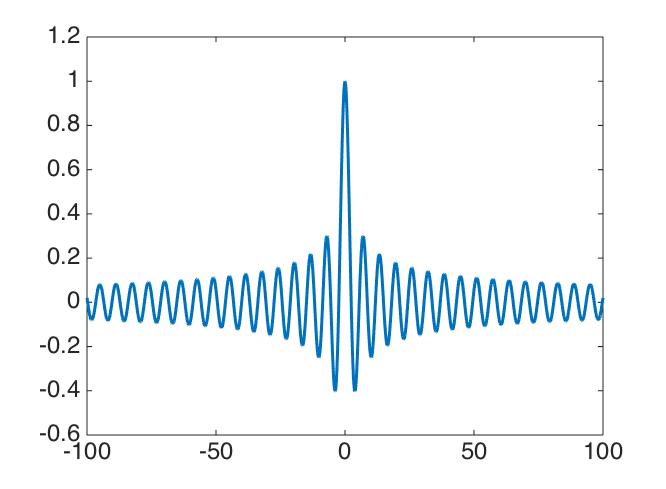}
		(a) $J_0$
	\end{minipage}\qquad
	\begin{minipage}[t]{0.28\linewidth}
		\centering
		\includegraphics[width=1.7in]{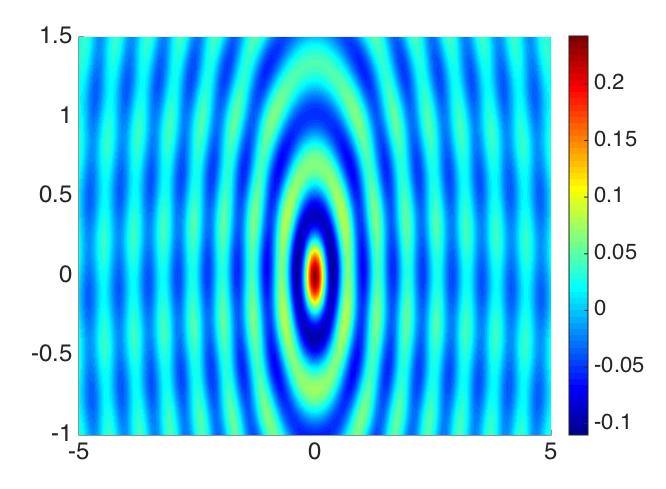}
		(b) ${\rm Im}G_D(x,z)$
	\end{minipage}\qquad
	\begin{minipage}[t]{0.28\linewidth}
		\centering
		\includegraphics[width=1.7in]{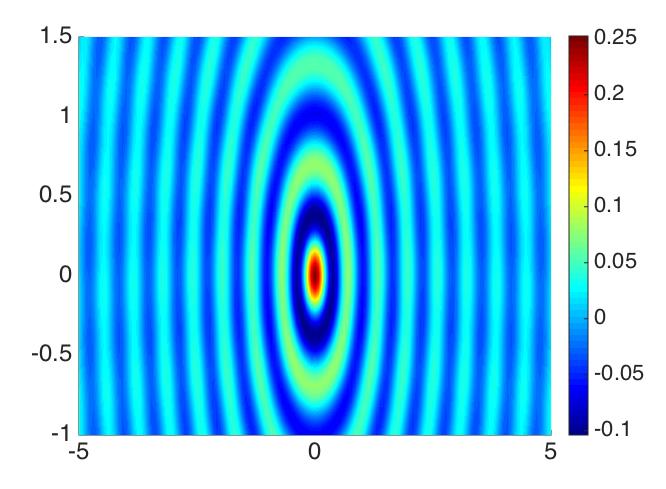}
		(c) ${\rm Im}G_P(x,z)$
	\end{minipage}
	\begin{minipage}[t]{0.28\linewidth}
		\centering
		\includegraphics[width=1.7in]{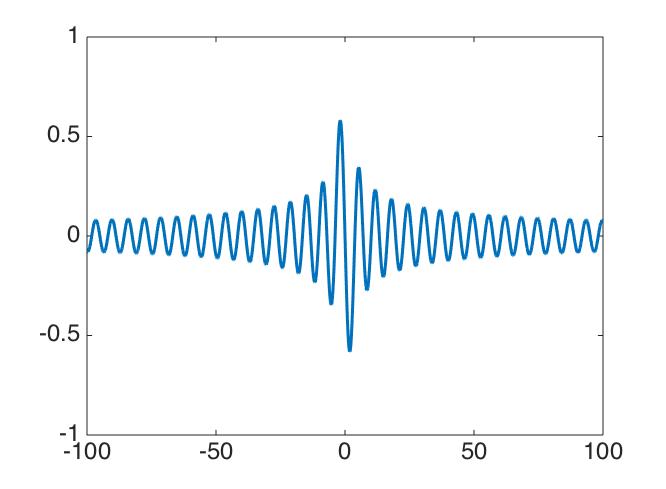}
		(d) $J'_0$
	\end{minipage}\qquad
	\begin{minipage}[t]{0.28\linewidth}
		\centering
		\includegraphics[width=1.7in]{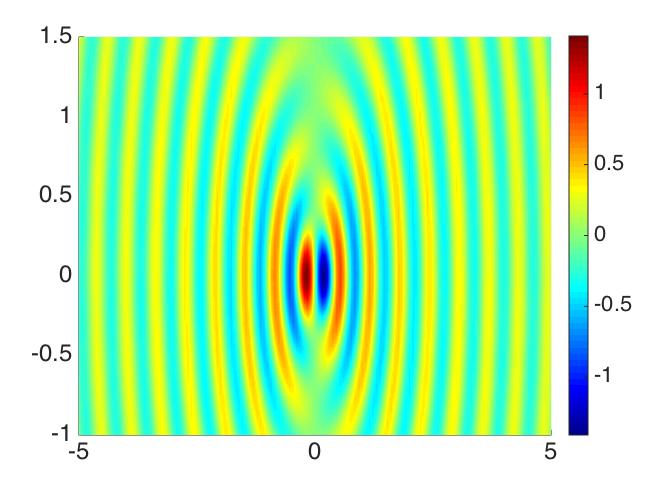}
		(e) $\partial_{x_1}{\rm Im}G_N(x,z)$
	\end{minipage}\qquad
	\begin{minipage}[t]{0.28\linewidth}
		\centering
		\includegraphics[width=1.7in]{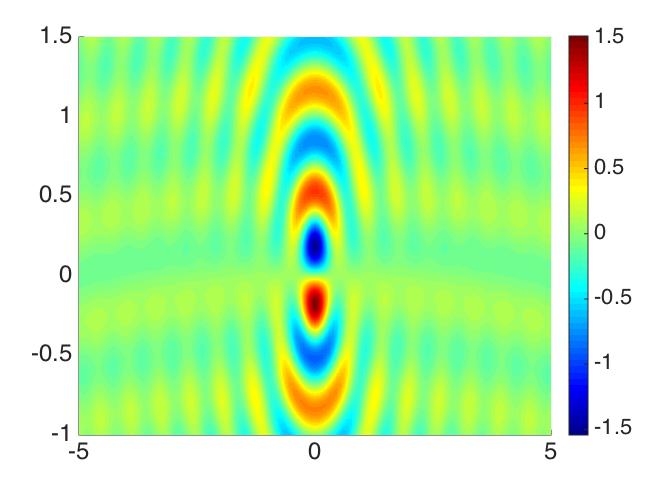}
		(f) $\partial_{x_2}{\rm Im}G_N(x,z)$
	\end{minipage}
	\qquad\qquad
	\caption{The image of functions $J_0$, $J'_0$, ${\rm Im}G_D(x,z)$, ${\rm Im}G_P(x,z)$, $\partial_{x_1}{\rm Im}G_N(x,z)$ and $\partial_{x_2}{\rm Im}G_N(x,z)$ with $R=95$, $x\in [-5, 5]\times[-1,1.5]$ and the source $z=(0,0)$. The wavenumber $\kappa_1=10$ in (b),(e), (f) and $\kappa=(10,5)$ in (c).
	}\label{f7}
\end{figure}

In all examples, we assume that the locally rough surface function $f$ is supported in $[-5, 5]$, the sample domain $S=[-5, 5]\times [-1, 1.5]$, and $N_s=N_r=1024$ for impenetrable locally rough surfaces and $N_s=N_r=2048$ for penetrable locally rough surfaces, and we set $R=95$ for the special locally rough surface $\Gamma_R$. In addition, we take the wave number $\kappa_1=10$ for impenetrable case and $\kappa_1=10$, $\kappa_2=5$ for penetrable case. The synthetic data is generated by applying the Nystr\"{o}m method to solve the corresponding direct scattering problem,  see \cite{LYZ13, LB13} for details. 

To test the stability of the RTM method, we consider the performance of this method with noisy data. For some relative error $\tau>0$, we inject some noise into the data by defining 
\begin{eqnarray*}
	u_{\tau}^s(x)=u^s(x)+\tau\frac{\beta}{\|\beta\|_2}\|u^s(x)\|_2
\end{eqnarray*}
where $\beta=\beta_1+{\rm i}\beta_2$ is complex-valued with $\beta_1$ and $\beta_2$ consisting of random numbers obeying standard normal distribution $N(0,1)$.

{\bf Example 1.} In this example, the locally rough surface $\Gamma$ is described as 
\begin{equation}\nonumber
	f_1(x_1)=\left\{\begin{array}{c}
		0.5+0.6\sin(0.6\pi x_1)\exp(16/(x_1^2-16)) \qquad\;\; |x_1|<4, \\[1mm]
		\qquad\qquad\qquad\qquad\qquad\qquad\qquad\;\;\;\; 0.5\;\qquad\quad|x_1|\geq 4.
	\end{array}\right.
\end{equation}
The reconstruction results from exact data are presented in Figure \ref{f4}, where the top row is the results from near-field data, and the bottom row is the reconstruction from far-field data. As shown in Figure \ref{f4}, the RTM approach can present a satisfactory reconstruction.

{\bf Example 2.} In this example, the locally rough surface $\Gamma$ is a multiscale profile given by 
\begin{equation}\nonumber
	f_2(x_1)=\left\{\begin{array}{c}
		0.5+(0.5+0.05\sin(3\pi x_1))\exp(4/(x_1^2-16)) \qquad\;\; |x_1|<4, \\[1mm]
		\qquad\qquad\qquad\qquad\qquad\qquad\qquad\qquad\qquad 0.5\;\qquad\;\; |x_1|\geq 4.
	\end{array}\right.
\end{equation}
The reconstructions with $5\%$ noise are illustrated in Figure \ref{f5}, which shows that the RTM method can provide a satisfactory imaging quality at $5\%$ noise level.

{\bf Example 3.} In the last example, the locally rough surface $\Gamma$ is described by a piecewise continuous function given by 
\begin{equation}\nonumber
	f_3(x_1)=\left\{\begin{array}{lll}
		0.2 \qquad\;\; |x_1|\leq 1, \\[1mm]
		0.3 \qquad\;\; 3\leq|x_1|\leq 4,\\[1mm]
		0.5  \qquad\;\;{\rm others}.
	\end{array}\right.
\end{equation}
The numerical results are shown in Figure \ref{f6}, which demonstrates that the RTM method can provide satisfactory reconstructions for piecewise continuous surfaces.

\begin{figure}[htbp]
	\begin{minipage}[t]{0.28\linewidth}
		\centering
		\includegraphics[width=1.7in]{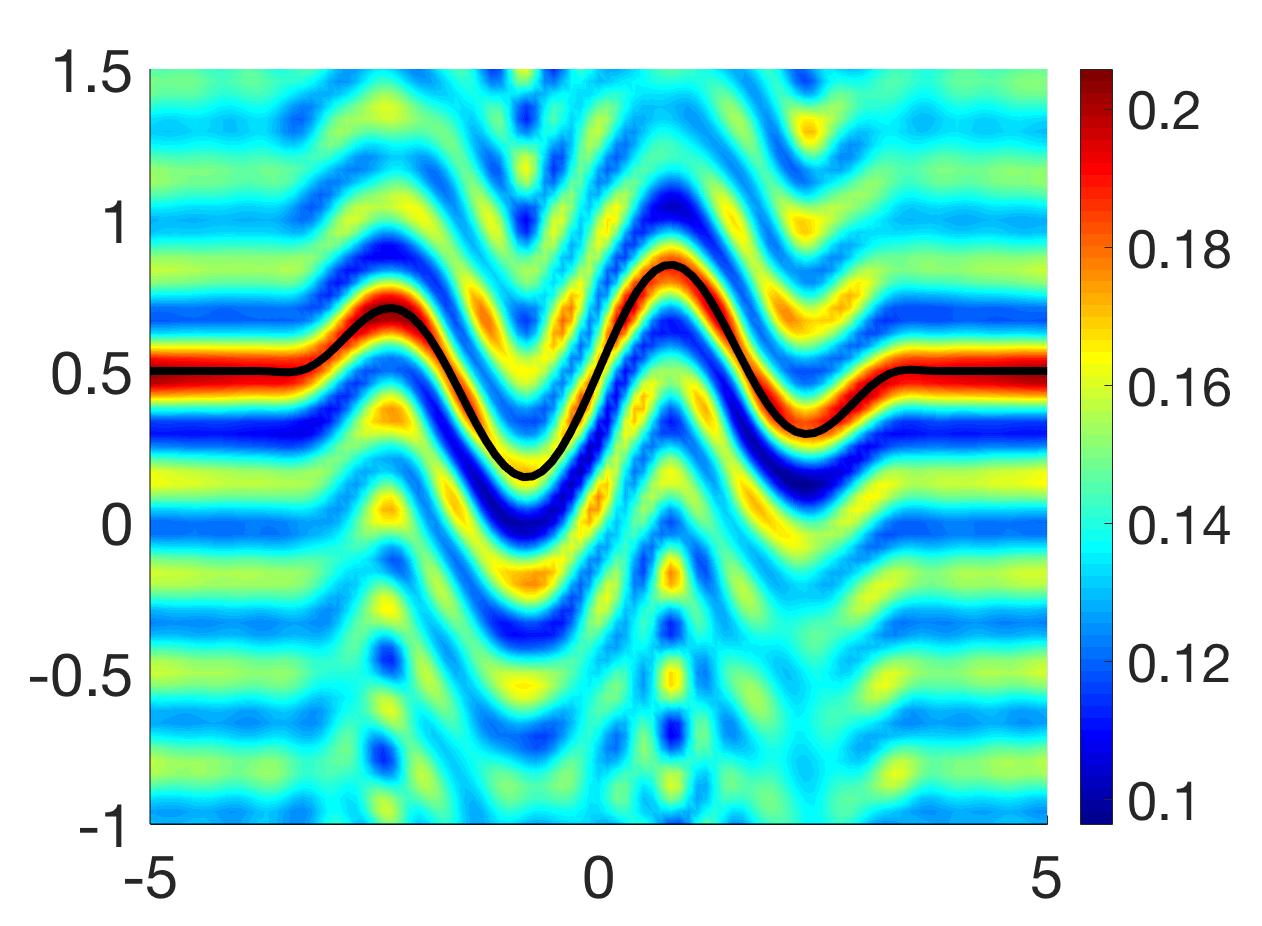}
		(a) Dirichlet
	\end{minipage}\qquad
	\begin{minipage}[t]{0.28\linewidth}
		\centering
		\includegraphics[width=1.7in]{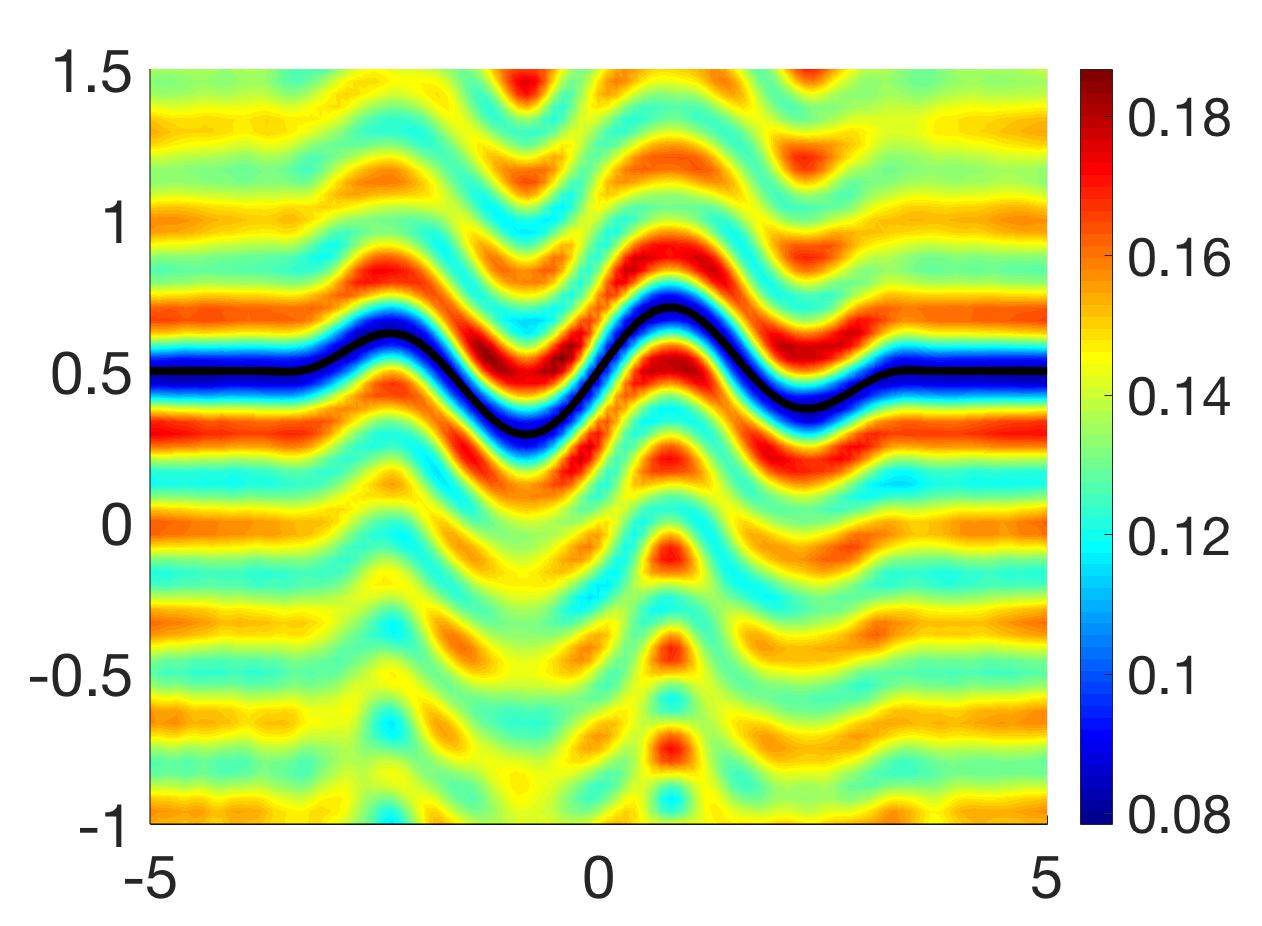}
		(b) Neumann
	\end{minipage}\qquad
	\begin{minipage}[t]{0.28\linewidth}
		\centering
		\includegraphics[width=1.7in]{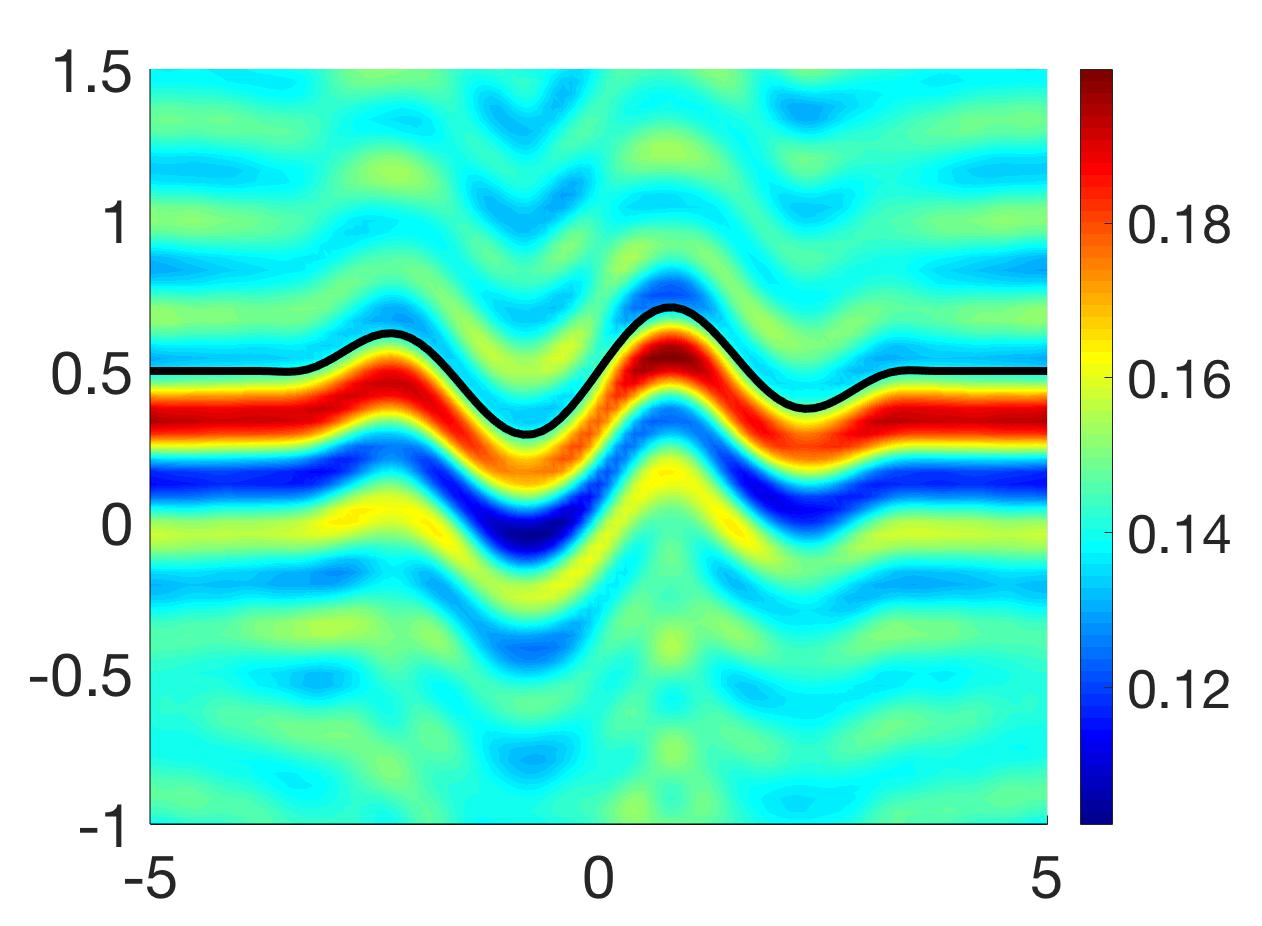}
		(c) Penetrable
	\end{minipage}
	\begin{minipage}[t]{0.28\linewidth}
		\centering
		\includegraphics[width=1.7in]{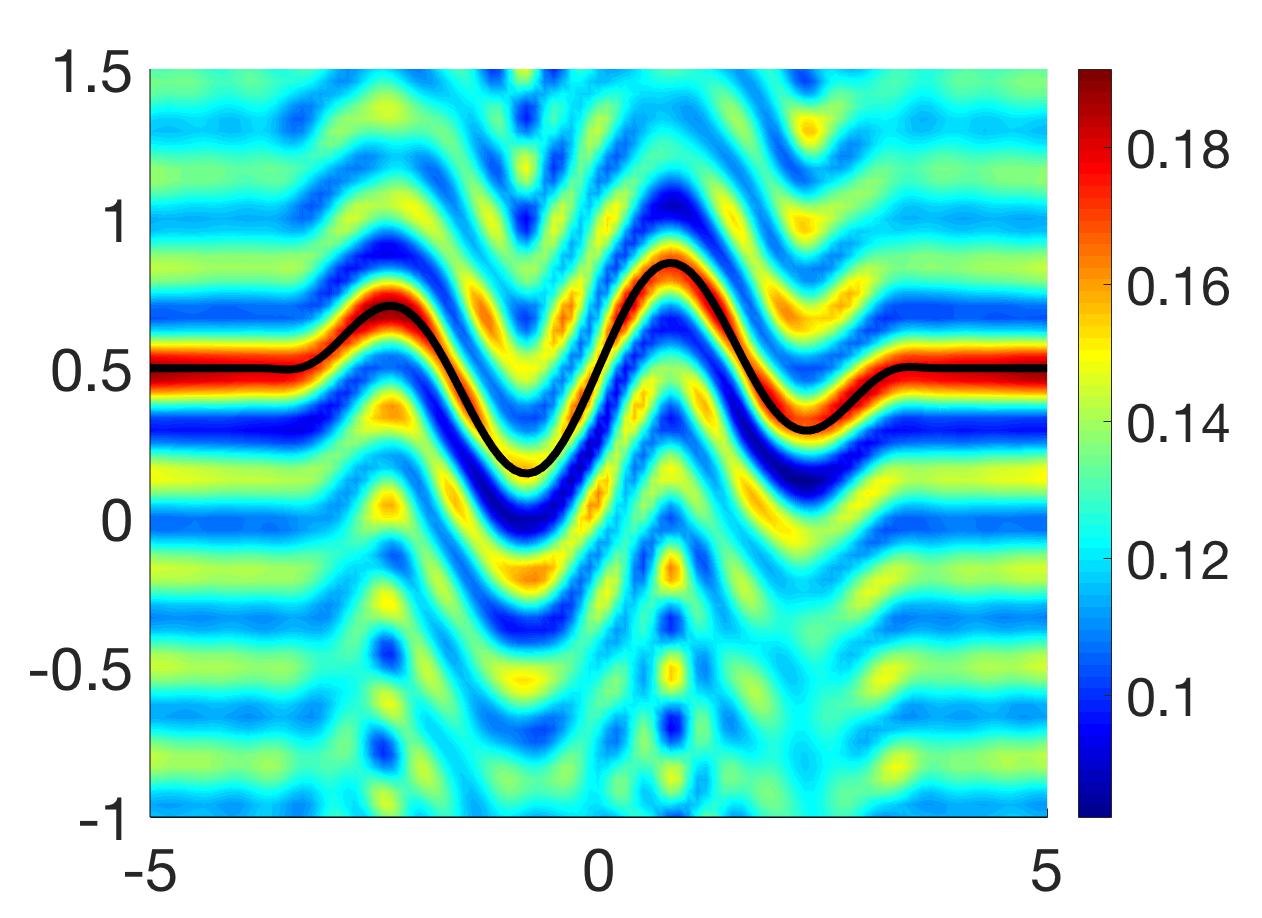}
		(d) Dirichlet
	\end{minipage}\qquad
	\begin{minipage}[t]{0.28\linewidth}
		\centering
		\includegraphics[width=1.7in]{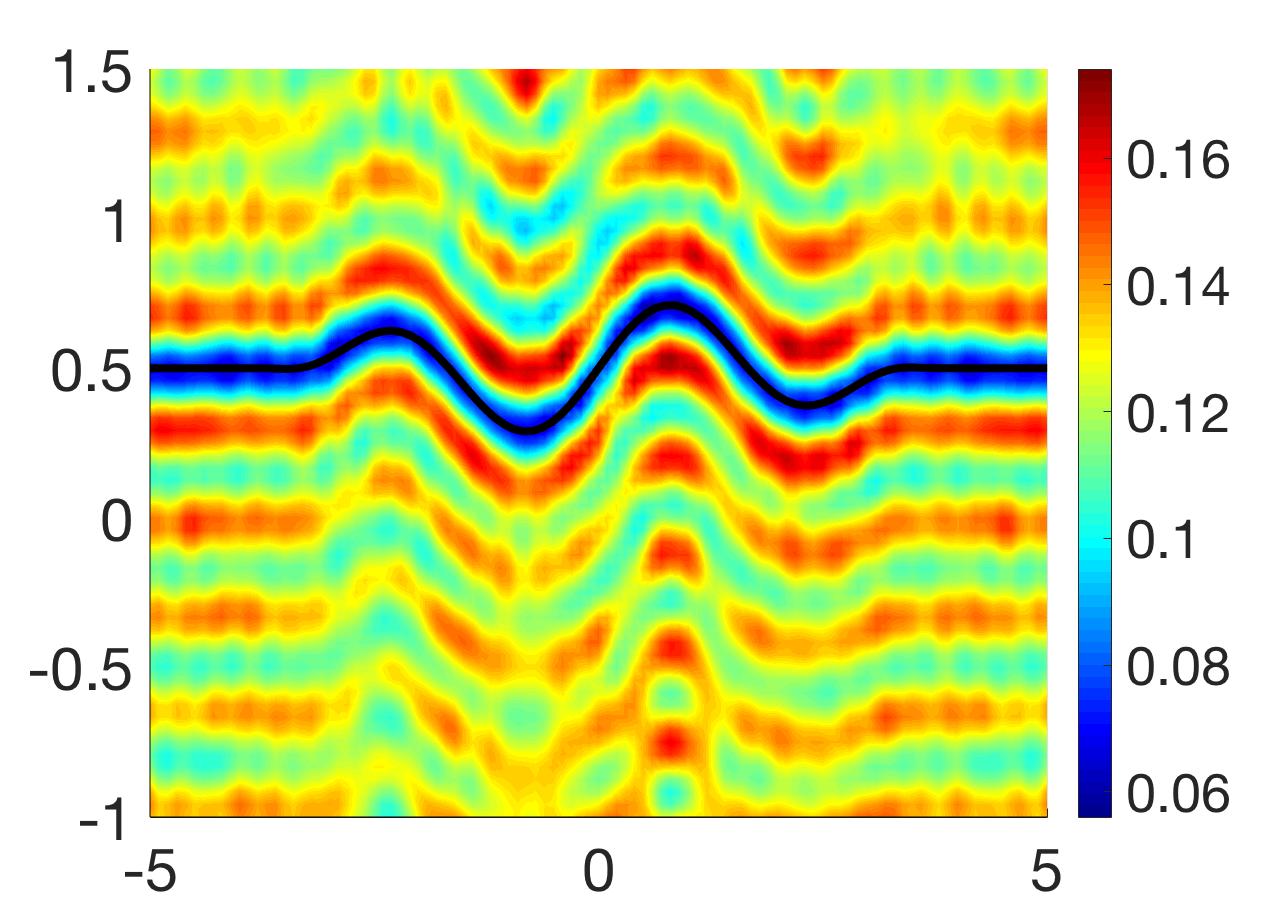}
		(e) Neumann
	\end{minipage}\qquad
	\begin{minipage}[t]{0.28\linewidth}
		\centering
		\includegraphics[width=1.7in]{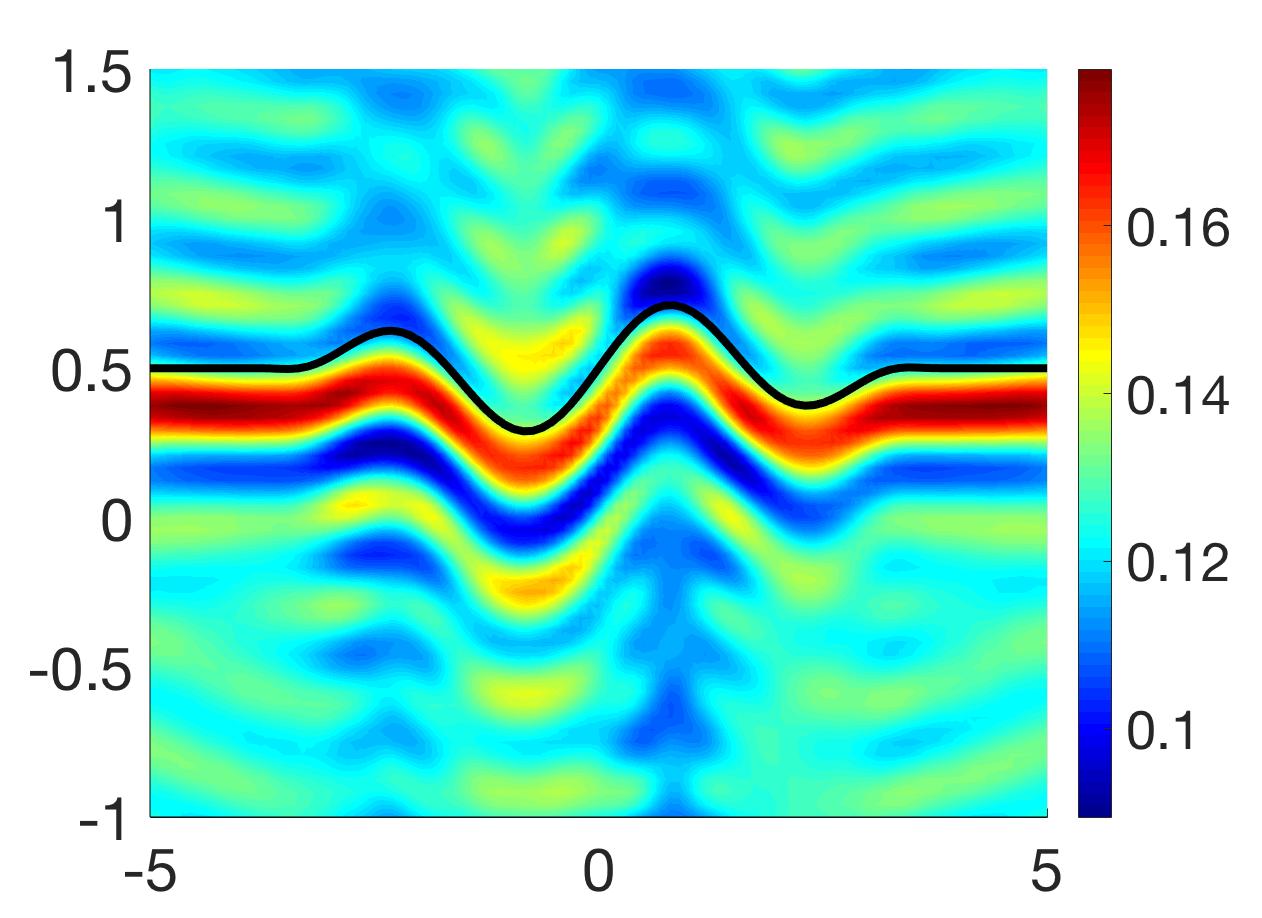}
		(f) Penetrable
	\end{minipage}
	\qquad\qquad
\caption{Reconstructions of the locally rough surface given in Example 1 from data with no noise.The first and second rows are the reconstructions from near-field and far-field, respectively.}\label{f4}
\end{figure}

\begin{figure}[htbp]
	\begin{minipage}[t]{0.28\linewidth}
		\centering
		\includegraphics[width=1.7in]{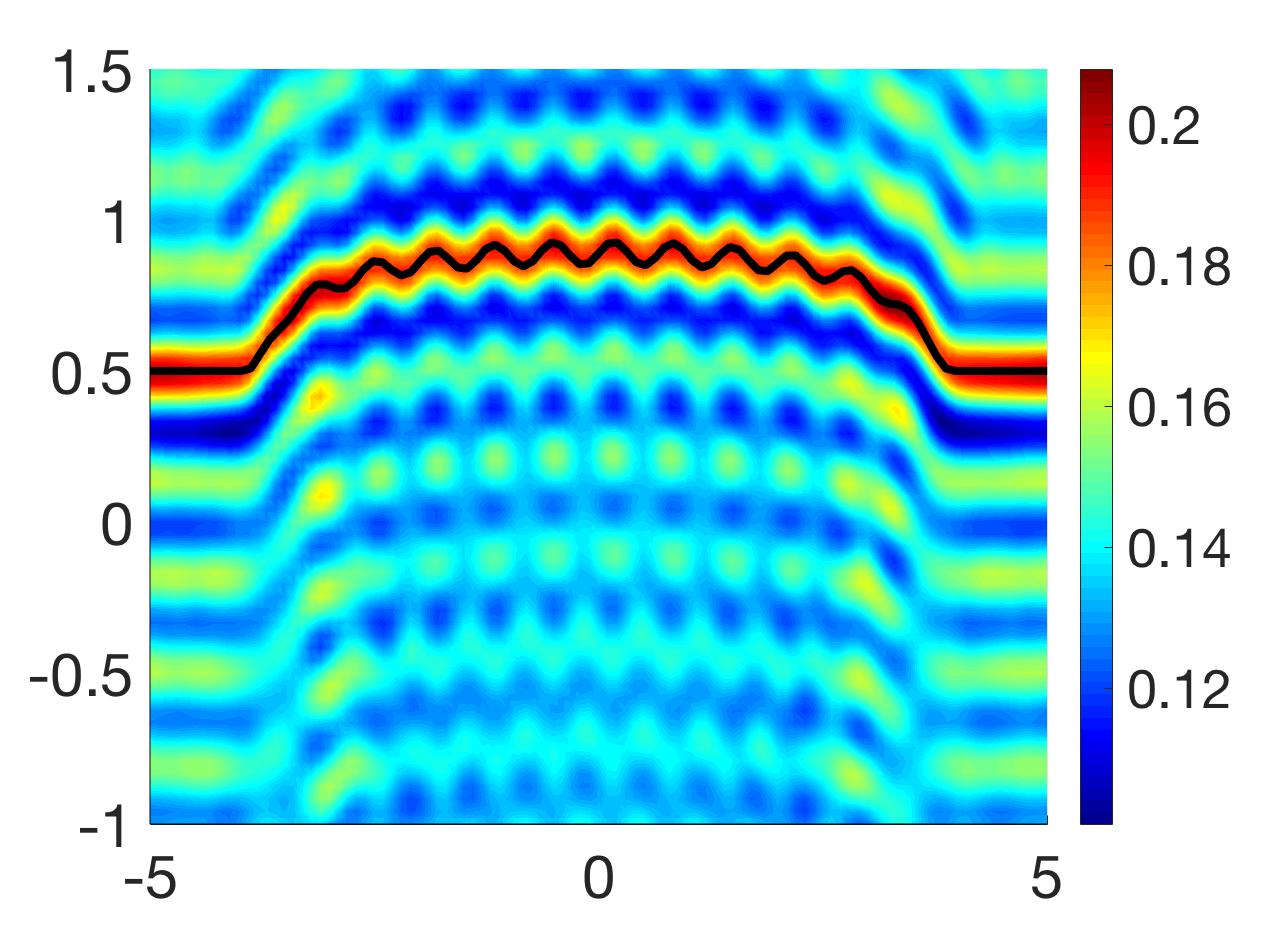}
		(a) Dirichlet
	\end{minipage}\qquad
	\begin{minipage}[t]{0.28\linewidth}
		\centering
		\includegraphics[width=1.7in]{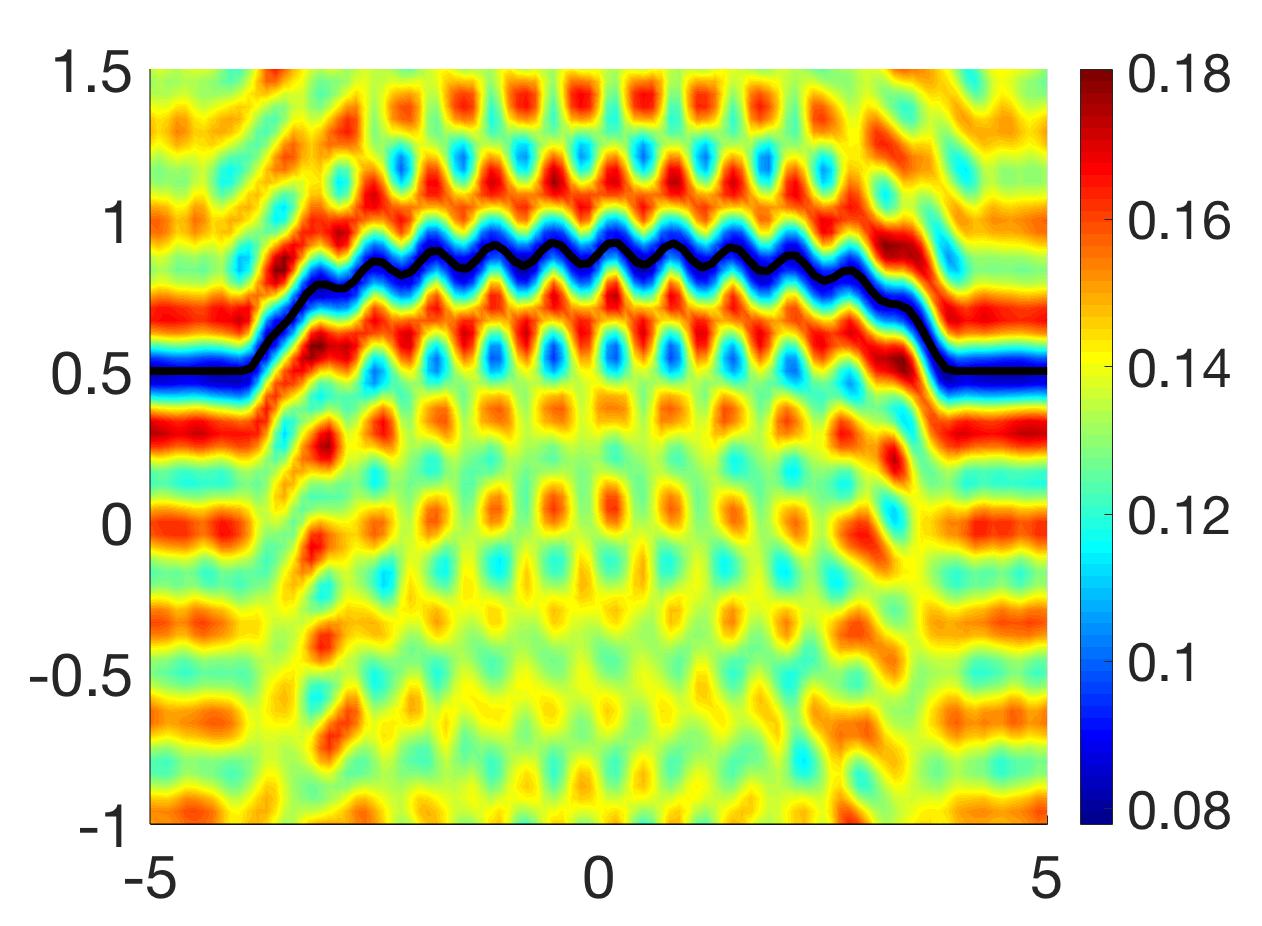}
		(b) Neumann
	\end{minipage}\qquad
	\begin{minipage}[t]{0.28\linewidth}
		\centering
		\includegraphics[width=1.7in]{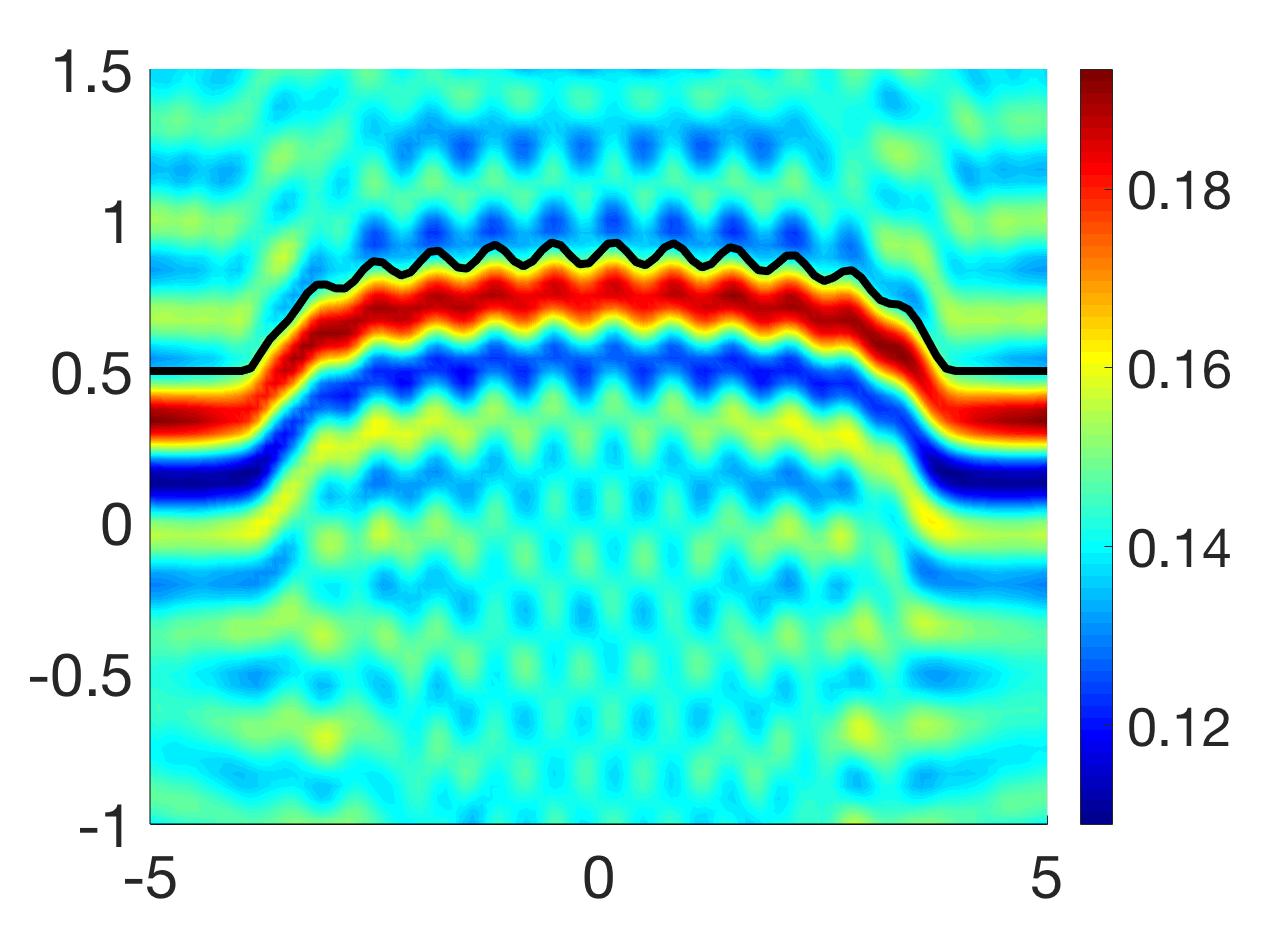}
		(c) Penetrable
	\end{minipage}
	\begin{minipage}[t]{0.28\linewidth}
		\centering
		\includegraphics[width=1.7in]{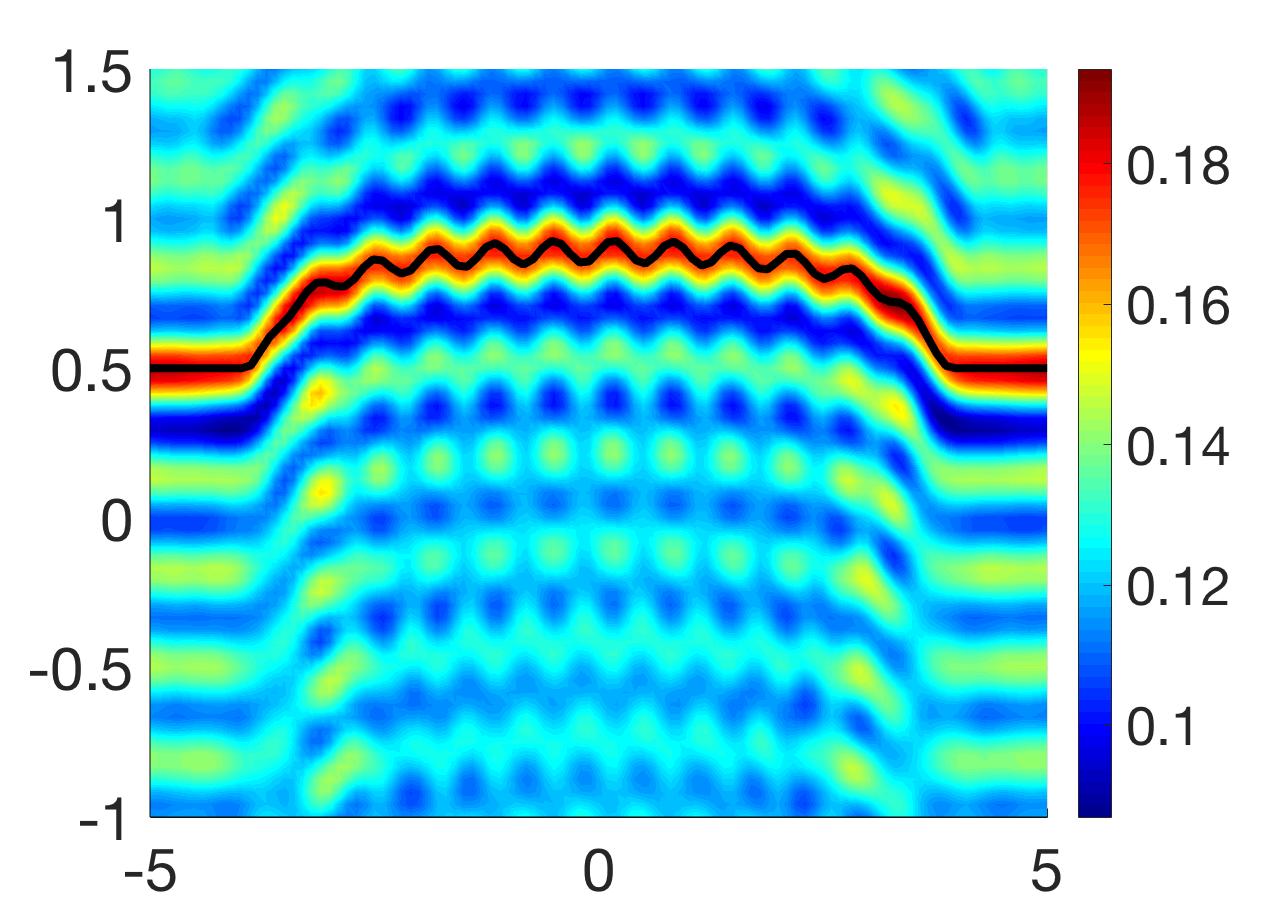}
		(d) Dirichlet
	\end{minipage}\qquad
	\begin{minipage}[t]{0.28\linewidth}
		\centering
		\includegraphics[width=1.7in]{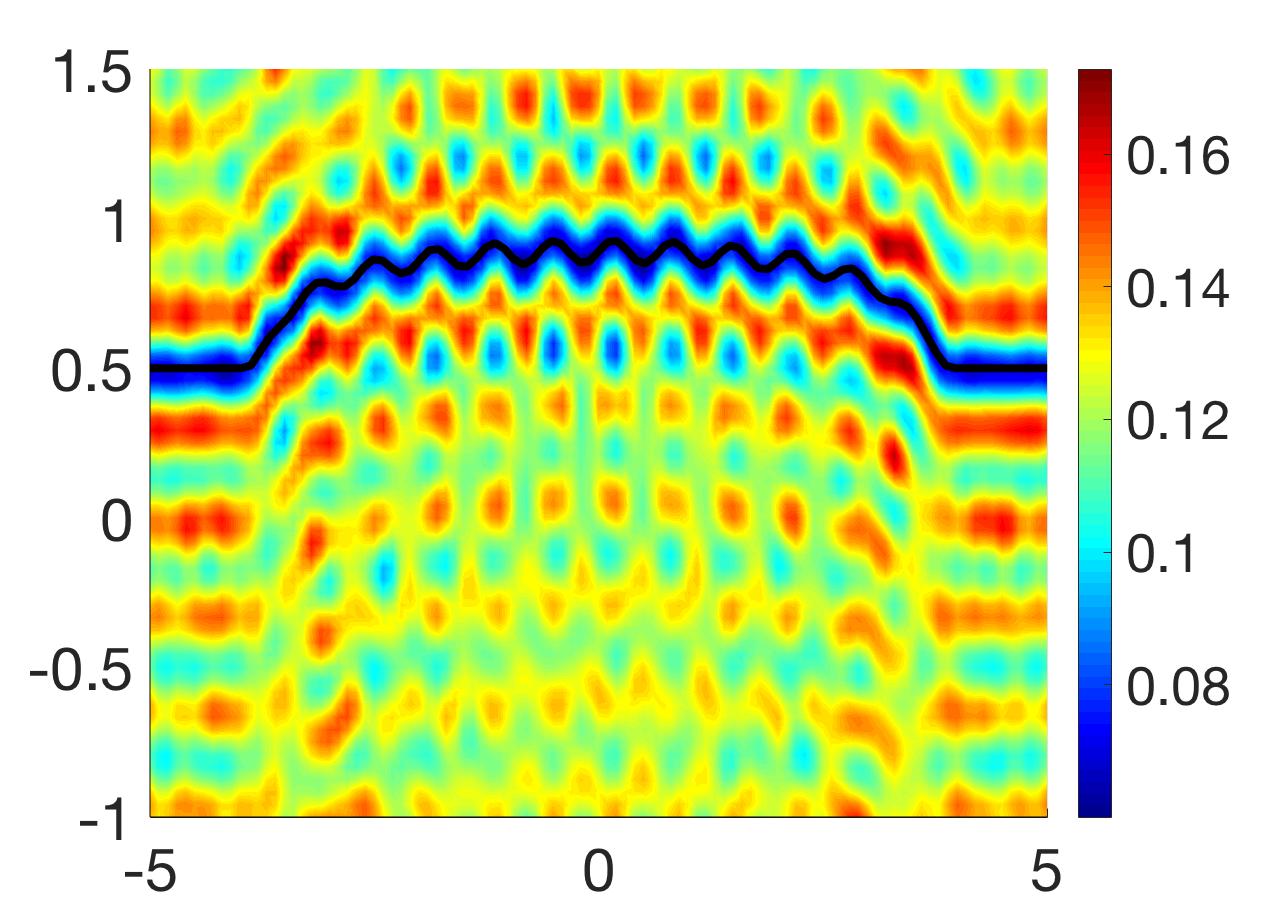}
		(e) Neumann
	\end{minipage}\qquad
	\begin{minipage}[t]{0.28\linewidth}
		\centering
		\includegraphics[width=1.7in]{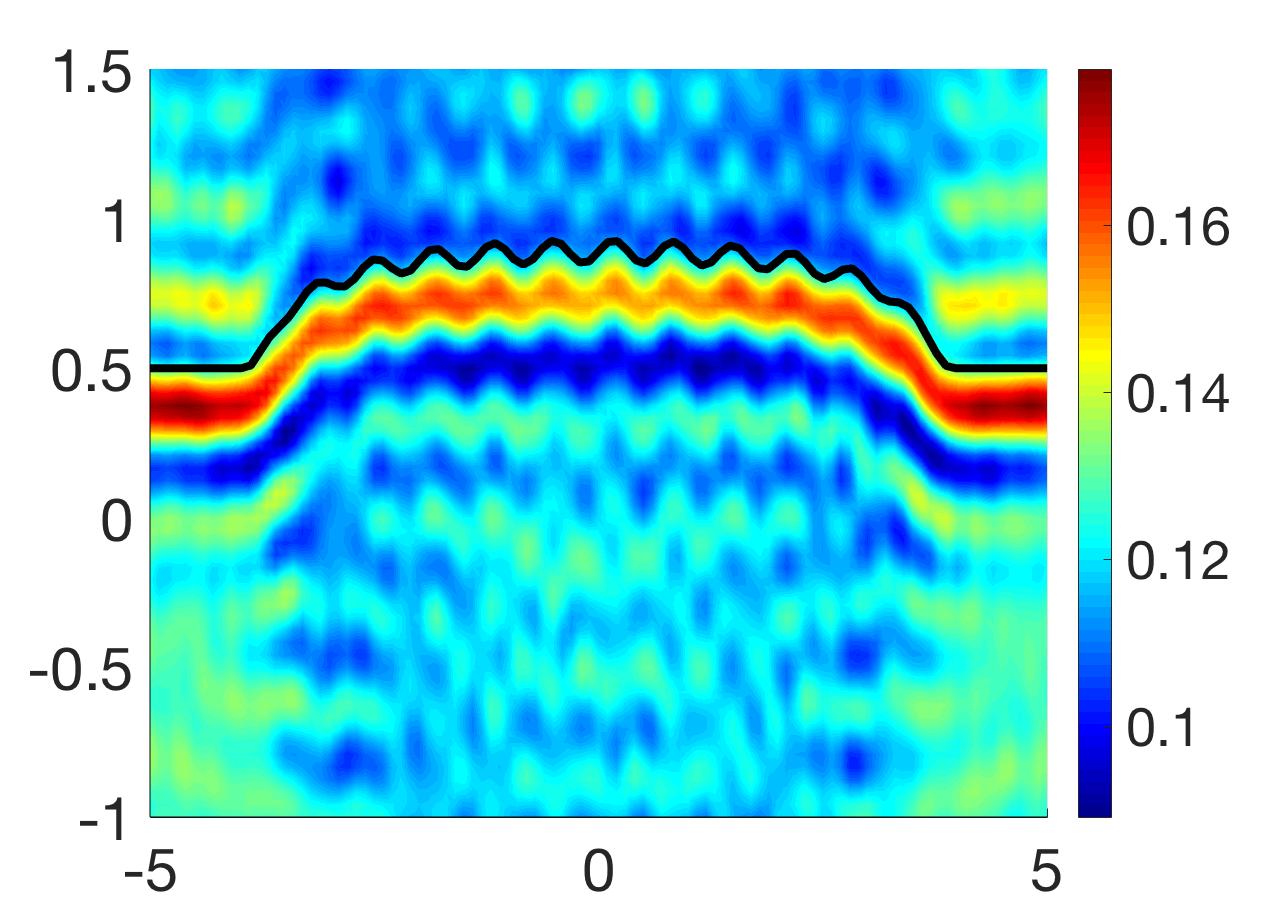}
		(f) Penetrable
	\end{minipage}
	\qquad\qquad
		\caption{Reconstructions of the locally rough surface given in Example 2 from data with 5\% noise. The first, second rows are the reconstructions from near-field, far-field, respectively.}\label{f5} 
\end{figure}

\begin{figure}[htbp]
	\begin{minipage}[t]{0.28\linewidth}
		\centering
		\includegraphics[width=1.7in]{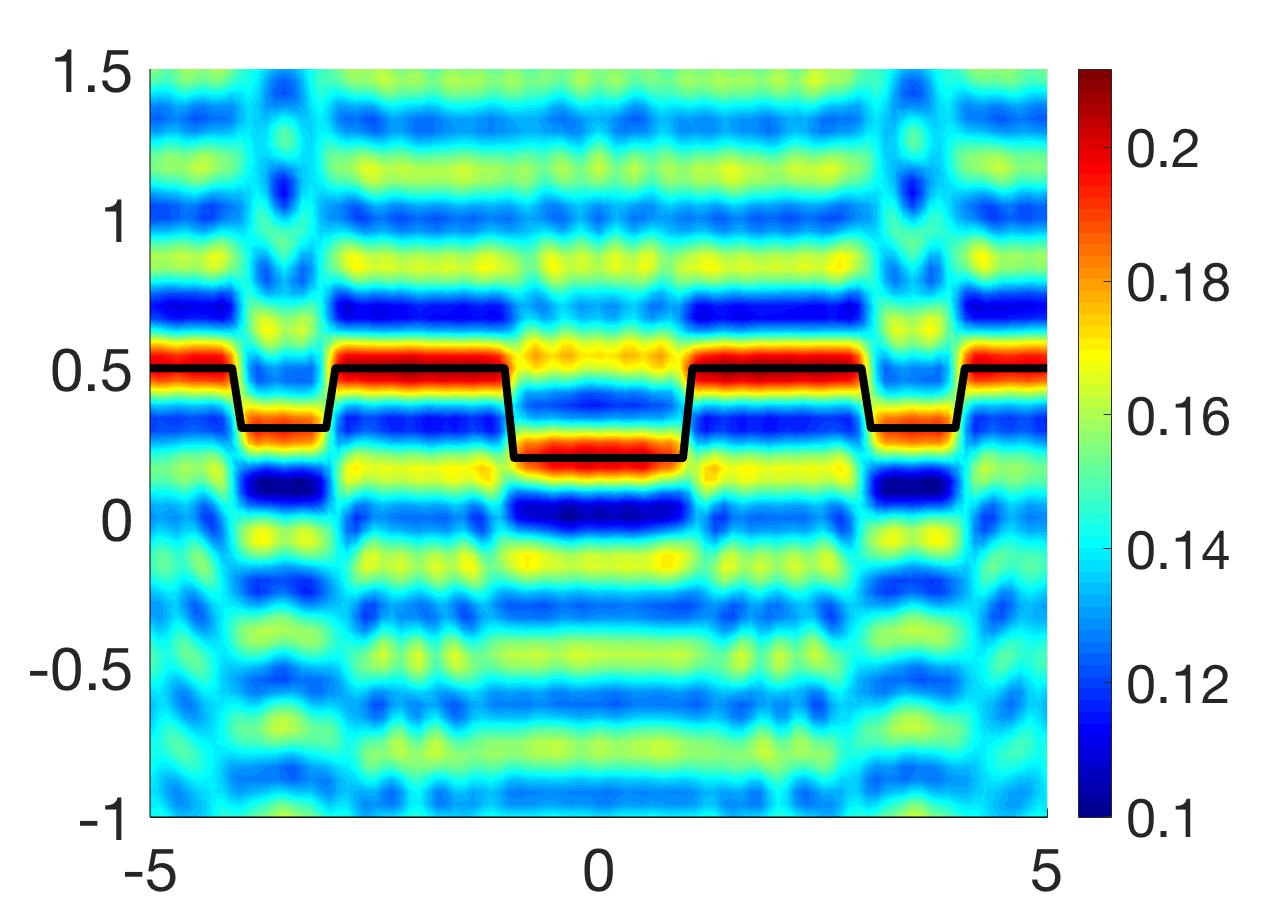}
		(a) Dirichlet
	\end{minipage}\qquad
	\begin{minipage}[t]{0.28\linewidth}
		\centering
		\includegraphics[width=1.7in]{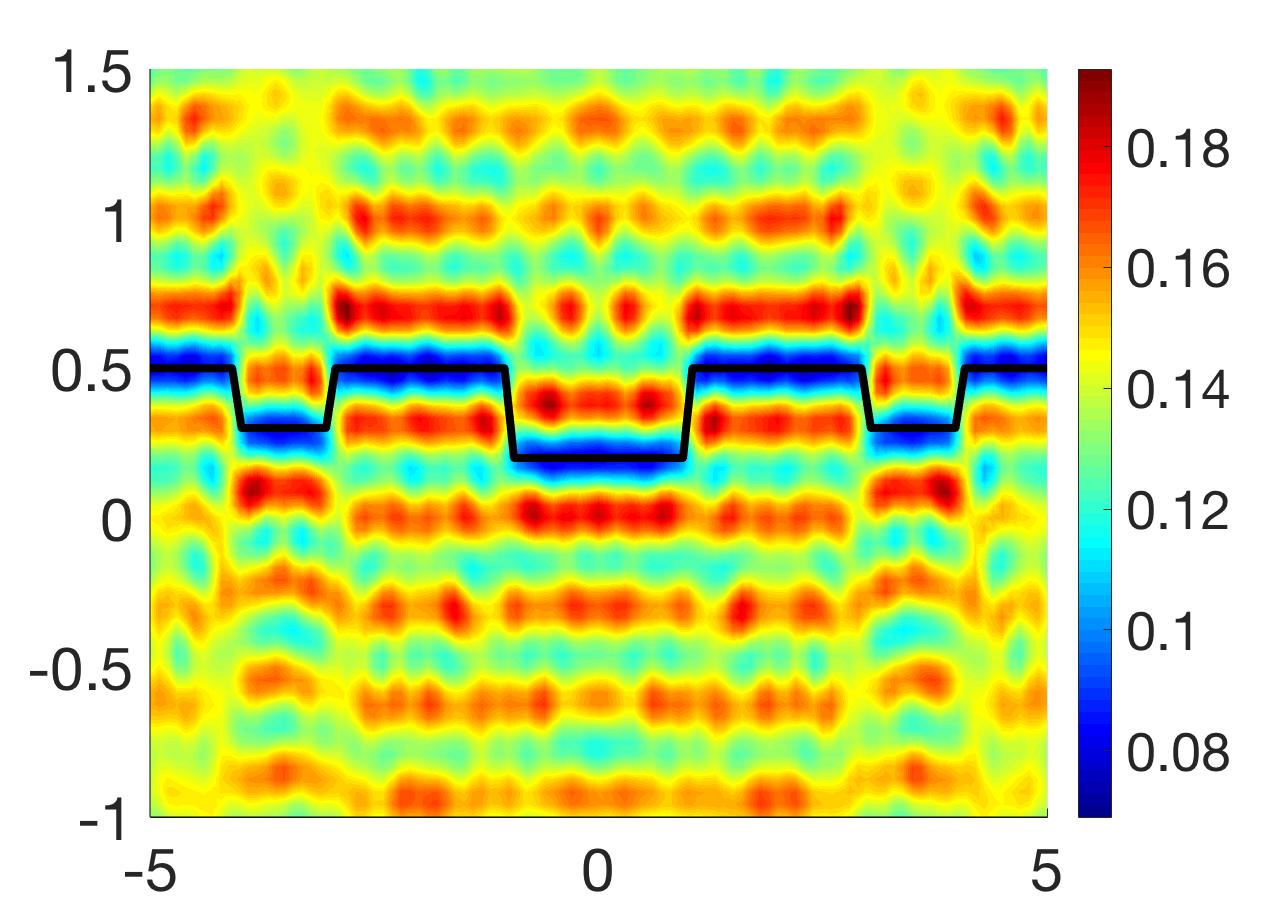}
		(b) Neumann
	\end{minipage}\qquad
	\begin{minipage}[t]{0.28\linewidth}
		\centering
		\includegraphics[width=1.7in]{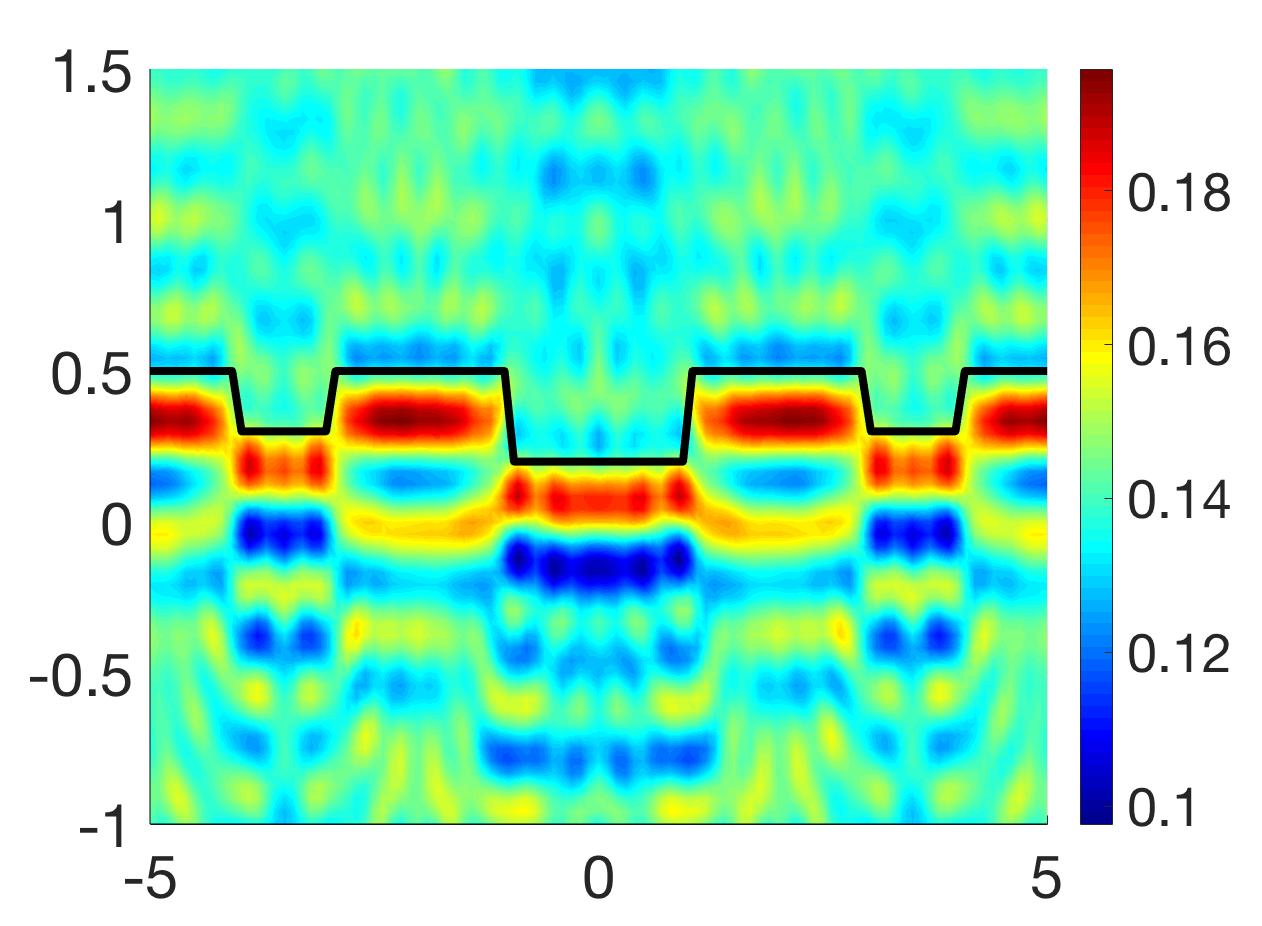}
		(c) Penetrable
	\end{minipage}
	\begin{minipage}[t]{0.28\linewidth}
		\centering
		\includegraphics[width=1.7in]{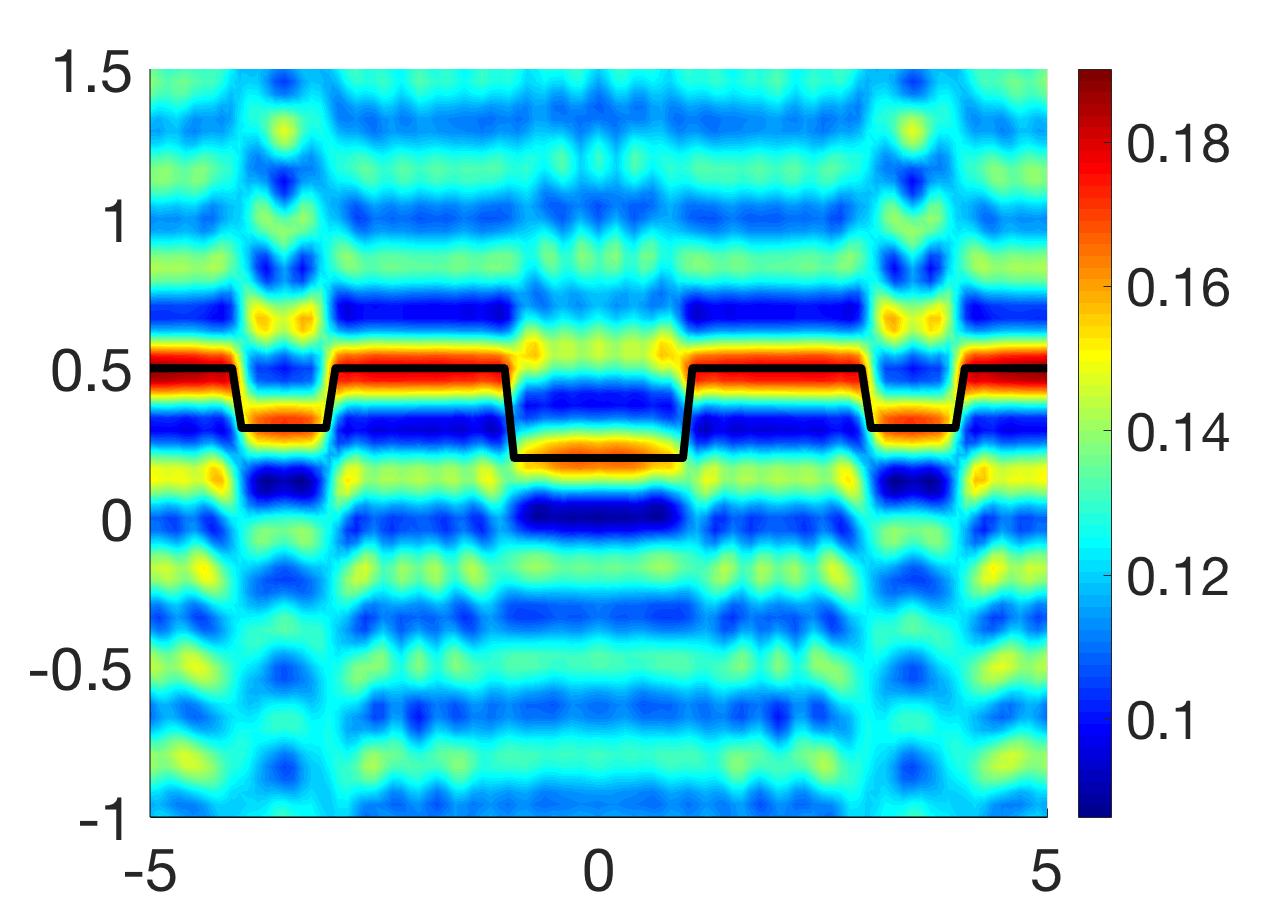}
		(d) Dirichlet
	\end{minipage}\qquad
	\begin{minipage}[t]{0.28\linewidth}
		\centering
		\includegraphics[width=1.7in]{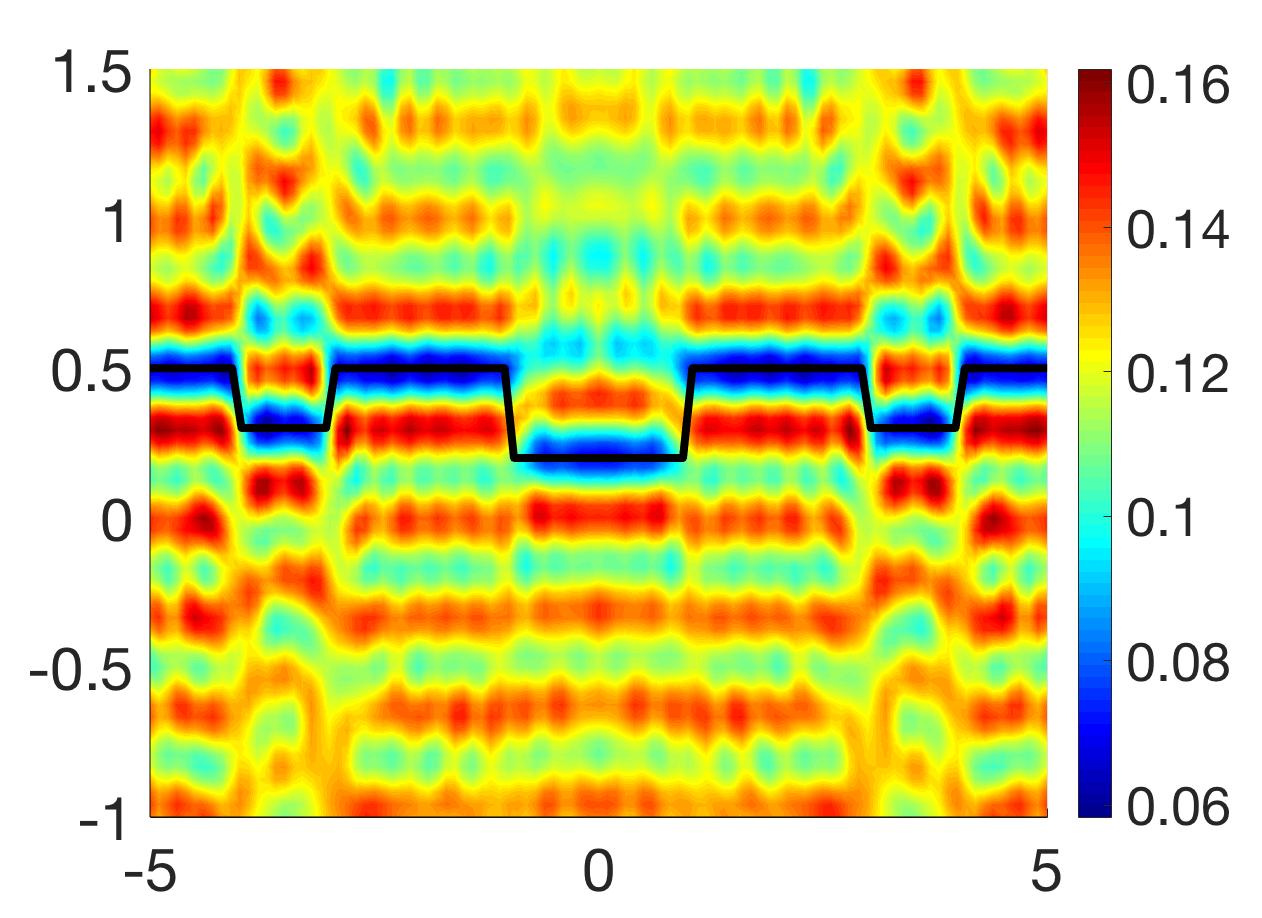}
		(e) Neumann
	\end{minipage}\qquad
	\begin{minipage}[t]{0.28\linewidth}
		\centering
		\includegraphics[width=1.7in]{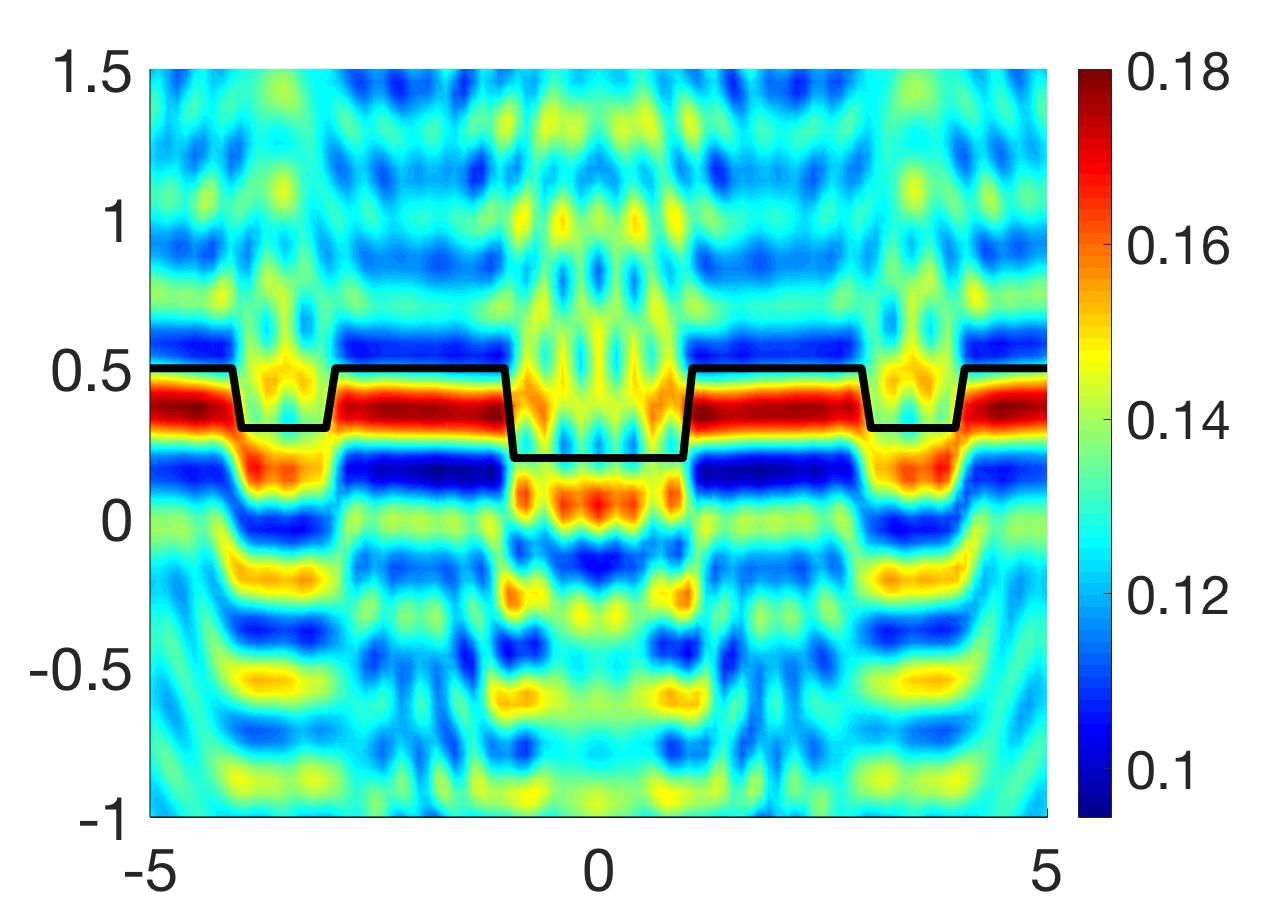}
		(f) Penetrable
	\end{minipage}
	\qquad\qquad
	\caption{Reconstructions of the locally rough surface given in Example 3 from data with 5\% noise. The first, second rows are the reconstructions from near-field, far-field, respectively.}\label{f6}
\end{figure}

From the above numerical experiments, it can be observed that the RTM method proposed in Theorem \ref{thm1} and Theorem \ref{thm3} can provide accurate and stable reconstructions for a variety of locally rough surfaces with the Dirichlet, the Neumann, and the transmission boundary conditions. In addition, it is easily seen that the RTM method could give a high quality reconstruction for some complicated locally rough surfaces such as multiscale case and piecewise continuous case. 

\section{Conclusion}
This paper proposed extended RTM methods to recover the shape and location of a locally rough surface with a Dirichlet, Neumann, or transmission boundary conditions from both the near- and far-field measurements. The idea is mainly based on constructing a modified Helmholtz-Kirchhoff identity associated with a special locally rough surface, and a novel mixed reciprocity relation. Numerical experiments demonstrated that the inversion algorithms can provide 
a stable and satisfactory reconstruction for a variety of locally rough surfaces. As far as we know, this is the first result for RTM approach to recover an unbounded rough surface. However, it is more challenging to extend the RTM method to reconstruct a diffraction grating and a non-local rough surface. We hope to report the progress on this topic in the future.

\section*{Acknowledgments}

This work was supported by the NNSF of China grants No. 12171057, 12122114, and Education
Department of Hunan Province No. 21B0299. The authors thank Prof. Haiwen Zhang for the valuable discussions.


\end{document}